\newcounter{count}[section]
\theoremstyle{plain}
\newtheorem{theorem}[count]{Theorem}
\newtheorem{prop}[count]{Proposition}
\newtheorem{lemma}[count]{Lemma}
\newtheorem{example}[count]{Example}
\newtheoremstyle{named}{}{}{\itshape}{}{\bfseries}{.}{ }{#1 \thmnote{#3}}
\theoremstyle{named}
\newtheorem*{namedtheorem}{Theorem}
\theoremstyle{definition}
\newtheorem{definition}[count]{Definition}
\newtheorem{remark}[count]{Remark}
\newtheorem{conj}[count]{Conjecture}
\newtheorem{question}[]{Question}
\newcommand{\Z}{\mathbb{Z}}
\newcommand{\Q}{\mathbb{Q}}
\newcommand{\D}{\mathbb{D}}
\DeclareMathOperator{\rank}{rank}
\DeclarePairedDelimiter{\tonde}{(}{)}
\newcommand{\nota} [1] {\caption{\footnotesize{#1}}}
\newcommand{\qhs}{$\mathbb{Q}HS$}
\newcommand{\qht}{$\mathbb{Q}HT$}
\newcommand\blfootnote[1]{%
  \begingroup
  \renewcommand\thefootnote{}\footnote{#1}%
  \addtocounter{footnote}{-1}%
  \endgroup
}
\lstdefinestyle{mystyle}{ 
    breakatwhitespace=false,         
    breaklines=true,                 
    captionpos=b,                    
    keepspaces=true,                 
    numbers=left,                    
    numbersep=5pt,                  
    showspaces=false,                
    showstringspaces=false,
    showtabs=false,                  
    tabsize=2
}
\begin{document}
\title{Dodecahedral $L$-spaces and hyperbolic 4-manifolds}
\author{Ludovico Battista, Leonardo Ferrari, Diego Santoro}
\date{}

\maketitle

\begin{abstract}
We prove that exactly 6 out of the 29 rational homology 3-spheres tessellated by four or less right-angled hyperbolic dodecahedra are $L$-spaces. The algorithm used is based on the $L$-space census provided by Dunfield in \cite{D}, and relies on a result by Rasmussen-Rasmussen \cite{RR}. We use the existence of these manifolds together with a result of Martelli \cite{M} to construct explicit examples of hyperbolic 4-manifolds containing separating $L$-spaces, and therefore having vanishing Seiberg-Witten invariants. This answers a question asked by Agol and Lin in \cite{AL}.\blfootnote{L.F.\ was supported by the Swiss National Science Foundation, project no.\ PP00P2–202667.}
    
\end{abstract}

\section{Introduction}

In recent years, the field of low-dimensional topology has seen a growing interest in the study of $L$-spaces (see Definition \ref{defn:Lspace}).
Although determining if a given manifold is an $L$-space is algorithmically decidable (see \cite{SarWan}), it seems difficult to implement such an algorithm. A great work in this direction was done by Dunfield in \cite{D}, where the author identifies all the $L$-spaces in a census $\mathscr{Y}$ of $\sim 300'000$ hyperbolic 3-manifolds, through the use of an important tool provided by \cite{RR} (see Theorem \ref{theorem RR}).

The procedure in \cite{D} makes large use of the classification \cite{B} of the $1$-cusped hyperbolic $3$-manifolds that can be triangulated with $9$ or less ideal tetrahedra; as a consequence, the manifolds in $\mathscr{Y}$ have volume bounded by 9.14. 

In this paper, we determine the $L$-spaces among some dodecahedral manifolds. 

\begin{definition}\label{defn:dodecahedral_manifold}
A hyperbolic 3-manifold is \emph{dodecahedral} if it may be tessellated by regular right-angled hyperbolic dodecahedra.
\end{definition}

Throughout the paper, all the manifolds are smooth, connected and orientable, unless otherwise stated. Hyperbolic manifolds are always complete.

The dodecahedral manifolds tessellated with four or less dodecahedra were classified in \cite{Goerner}. Using this, we give the following definition:

\begin{definition}\label{defn:censusD}
We denote by $\mathscr{D}$ the set of the 29 dodecahedral hyperbolic rational homology spheres tessellated with four or less dodecahedra (see Table \ref{table:dodecahedralmanifolds}).
\end{definition}

To identify the $L$-spaces in $\mathscr{D}$, we elaborate on some ideas presented by Dunfield in \cite{D}, and we use the algorithm described in Section \ref{section:proving_L_space}. With the help of the code provided by Dunfield in \cite{D}, we show that the remaining 3-manifolds are not $L$-spaces, so we can conclude:

\begin{theorem}\label{thm:classdodman}
Among the 29 manifolds in $\mathscr{D}$, 6 are $L$-spaces and 23 are not; see Tables \ref{table:dodecahedralmanifolds} - \ref{table:dodecahedralmanifoldswithorderable}.
\end{theorem}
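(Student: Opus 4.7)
The plan is to establish the theorem through a case analysis over the 29 manifolds in $\mathscr{D}$, with two different computational procedures handling the positive and negative direction of the count.

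For the positive direction, I would take each of the manifolds listed in Table \ref{table:dodecahedralmanifolds} and feed it into the algorithm of Section \ref{section:proving_L_space}. This algorithm refines Dunfield's approach from \cite{D} and is ultimately based on Theorem \ref{theorem RR} of Rasmussen--Rasmussen: it searches for an auxiliary knot complement together with filling slopes so that the $L$-space property can be propagated to the manifold at hand. Since dodecahedral manifolds tessellated by up to four dodecahedra have volumes above the cutoff $\sim 9.14$ of Dunfield's census $\mathscr{Y}$, the answer cannot be read off \cite{D} directly; the adapted algorithm must instead be run on concrete triangulations of each candidate, which can be extracted from the classification of \cite{Goerner}. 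I expect exactly 6 of the 29 manifolds to be certified as $L$-spaces in this way.

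For the negative direction, for each of the remaining 23 manifolds I would invoke the obstruction-finding tools accompanying \cite{D}. Standard certificates here come, for instance, from co-orientable taut foliations, from left-orders on the fundamental group, or from representations of $\pi_1$ into suitable Lie groups that preclude the $L$-space property. Running Dunfield's code on the 23 candidates and collecting the resulting witnesses allows us to conclude that none of them is an $L$-space, thus pinning down the count at exactly 6.

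The main obstacle is the positive direction. For each suspected $L$-space, one has to identify a presentation of the manifold as a filling for which Theorem \ref{theorem RR} actually applies; this is precisely what Section \ref{section:proving_L_space} is designed to automate, by extending the machinery of \cite{D} beyond the census volume bound. Once the 6 $L$-spaces are certified, the non-$L$-space certification for the remaining 23 manifolds is comparatively routine, as it uses Dunfield's code largely as a black box.
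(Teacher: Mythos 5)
Your overall strategy coincides with the paper's: the six $L$-spaces (indices 0, 2, 8, 11, 15, 28) are certified by the drilling--filling algorithm of Section \ref{section:proving_L_space}, which bootstraps Theorem \ref{theorem RR} down to manifolds in Dunfield's census, and the remaining 23 are handled by Dunfield's code. One terminological point: the auxiliary object is not a knot complement in $S^3$ but a Turaev-simple rational homology solid torus obtained by drilling a short geodesic out of $M$ itself.

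There is, however, one point in your negative direction that would not survive scrutiny as stated. You list left-orders on $\pi_1$ (and representations into Lie groups producing them) among the ``standard certificates'' that preclude the $L$-space property. The implication ``$\pi_1$ left-orderable $\Rightarrow$ not an $L$-space'' is precisely one direction of the open $L$-space conjecture (Conjecture \ref{conj:Lspaceconj}); it is not a theorem and cannot be used as a witness. The only rigorous obstruction available here is the existence of a co-orientable taut foliation, via Theorem \ref{CTF implica NLS} (Ozsv\'ath--Szab\'o, Bowden, Kazez--Roberts), and this is exactly what the paper uses: Dunfield's foliar-orientation search produces a co-orientable taut foliation on each of the 23 remaining manifolds. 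The left-orderability computations appear in Table \ref{table:dodecahedralmanifoldswithorderable} only as a consistency check against the conjecture, not as part of the proof. If your implementation happened to accept a left-order as a witness for any of the 23 manifolds, the count would not be established.
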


The information given by Theorem \ref{thm:classdodman} is very little compared with the one from \cite{D}. Nevertheless, the geometric properties of the manifolds in $\mathscr{D}$ can be used to answer a question asked by Agol and Lin in \cite{AL}. Before stating the question, we give a brief introduction to the problem. 

Seiberg-Witten invariants are smooth invariants for $4$-manifolds with $b_2^+\geq2$ and were defined in \cite{SW,SW1,W} by Seiberg and Witten. These invariants, coming from gauge theory, soon established surprising connections between the topology and the geometry of smooth $4$-manifolds. For example, if a $4$-manifold with $b_2^+\geq 2$ has a metric with positive scalar curvature then these invariants all vanish \cite{W}, while on the other side Taubes \cite{Tau} proved that a symplectic $4$-manifold with $b_2^+\geq2$ has a non-zero Seiberg-Witten invariant. Putting together these results, we have that any symplectic $4$-manifold with $b_2^+\geq 2$ does not admit a metric with positive scalar curvature.

In \cite{L} LeBrun conjectured that the Seiberg-Witten invariants of a closed hyperbolic $4$-manifold are all zero. In \cite{AL} Agol and Lin showed the existence of infinitely many commensurability classes of hyperbolic $4$-manifolds containing representatives with vanishing Seiberg-Witten invariants. This is shown by proving that there exist hyperbolic $4$-manifolds that contain separating $L$-spaces. Part of their proof was based on a result regarding the embeddings of arithmetic hyperbolic manifolds proved by Kolpakov-Reid-Slavich \cite{KRS}. As a consequence of this the hyperbolic $4$-manifolds of \cite{AL} are not explicitly constructed. Therefore they asked the following:

\begin{question}[{\cite[Conclusions (1)]{AL}}]\label{qst1}
Can one find an explicit hyperbolic $4$-manifold $N$ such that $N=N_1\cup_{M'} N_2$, where the separating hypersurface $M'$ is an $L$-space and such that $b_2^+(N_i)\geq 1$ for $i=1,2$?
\end{question}

The separating hypersurfaces that we will use are built from the ones in $\mathscr{D}$ and to build the 4-manifold we will follow the construction presented in \cite{M}. We discuss all the details in Section \ref{sec:building_4_man}. Here we just premise that the methods used in \cite{M} allow to construct the 4-manifold in an explicit way. In fact, if $M$ is a dodecahedral manifold tessellated into $n$ dodecahedra, the result of \cite{M} yields, under certain hypotheses, a 4-manifold $N$ tessellated into at most $2^{44}\cdot n$ hyperbolic right-angled 120-cells {\cite[Proof of Theorem 3]{M}} in which $M$ geodesically embeds. Notice that this gives a bound on the volume of $N$. This bound on the number of 120-cells is not sharp and in practice our examples are tessellated in a lot less 120-cells.

Using this construction, the manifold $M$ is non-separating inside $N$. However, inside $N$ it is easy to find a certain number of copies of $M$ that, all together, separate. At this point if $M$ is an $L$-space one can use an argument as in \cite[Corollary~2.5]{AL} to obtain a separating $L$-space $M'$ that is diffeomorphic to the connected sum of several copies of $M$. There is also a natural way to ensure that $b_2^+ (N_i)\geq 1$. The details are discussed in Section \ref{sec:building_4_man}.

With the help of Theorem \ref{thm:classdodman}, we prove the following:

\begin{theorem}\label{cor:120cellmanifold}
There are two hyperbolic 4-manifolds $\mathcal{N}_{11}$ and $\mathcal{N}_{28}$ tessellated with $2^9$ right-angled 120-cells that can be obtained as $N_1\cup_{M'} N_2$, where the separating hypersurface $M'$ is an $L$-space and such that $b_2^+(N_i)\geq 1$ for $i=1,2$.
\end{theorem}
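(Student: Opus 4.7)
The plan is to combine Theorem \ref{thm:classdodman} with the construction of Martelli in \cite{M}. Among the six $L$-spaces in $\mathscr{D}$ identified in Theorem \ref{thm:classdodman}, I would select two whose dodecahedral tessellations are compatible with the construction of \cite[Theorem 3]{M}; call these $M_{11}$ and $M_{28}$, indexed by their positions in Table \ref{table:dodecahedralmanifolds}. For each of them, applying Martelli's construction produces a hyperbolic 4-manifold $\mathcal{N}_i$, tessellated by right-angled 120-cells, in which $M_i$ embeds as a closed totally geodesic hypersurface.

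By construction $M_i$ is a priori non-separating inside $\mathcal{N}_i$, but the procedure of \cite{M} is built from iterated $\Z/2$-covers and therefore endows $\mathcal{N}_i$ with a large symmetry group. I would use these symmetries to exhibit a finite family of pairwise disjoint isometric copies of $M_i$ inside $\mathcal{N}_i$ whose union is separating, and then tube adjacent copies together along short arcs following the strategy of \cite[Corollary~2.5]{AL}. The resulting connected separating hypersurface $M'$ is diffeomorphic to a connected sum of several copies of $M_i$; since the connected sum of $L$-spaces is an $L$-space, $M'$ itself is an $L$-space, and the splitting $\mathcal{N}_i = N_1 \cup_{M'} N_2$ is of the required form.

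To verify that $b_2^+(N_j) \geq 1$ for $j=1,2$, I would analyse the intersection form on each side, exploiting the closed totally geodesic surfaces that arise from the right-angled 120-cell tessellation of $\mathcal{N}_i$. By choosing the separating family of copies of $M_i$ carefully, I would arrange that each side $N_j$ contains at least one such surface with positive self-intersection, which produces the positive cohomology class needed to guarantee $b_2^+(N_j)\geq 1$. This step relies on the explicit and combinatorial nature of Martelli's construction, which allows one to locate geodesic surfaces in each piece.

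The main obstacle is the quantitative claim that $2^9$ right-angled 120-cells suffice, strikingly sharper than the generic bound $2^{44}\cdot n$ of \cite[Theorem 3]{M}. The generic estimate is designed to cover every possible symmetry-breaking step in the inductive doubling, whereas for the two specific manifolds $M_{11}$ and $M_{28}$ the relevant symmetries are already present in the small dodecahedral tessellations, so only nine doublings are actually necessary. Establishing this precisely requires an explicit combinatorial analysis of the dodecahedral tessellations of $M_{11}$ and $M_{28}$ and of the way these extend to a 120-cell tessellation of the ambient 4-manifold; carrying out this bookkeeping, together with the verification of the $b_2^+$ condition on each side, is what Section \ref{sec:building_4_man} has to accomplish.
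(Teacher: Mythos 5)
Your overall skeleton (embed the dodecahedral $L$-spaces of indices $11$ and $28$ via Martelli's construction, find a disconnected separating family of copies, tube them into a connected sum $M'$ as in \cite[Corollary~2.5]{AL}) matches the paper. But your argument for $b_2^+(N_j)\geq 1$ would fail: a closed totally geodesic surface in a hyperbolic $4$-manifold has flat normal bundle (the normal holonomy is a flat $\SO(2)$-bundle), so its self-intersection vanishes, and you cannot produce a class of positive square this way. The paper's actual argument is homological and much softer: hyperbolic $4$-manifolds have signature zero, so $b_2^+=b_2^-$ and $\chi(N)=2(1-b_1(N)+b_2^+(N))$; since $\chi(N)=\tfrac{17}{2}\cdot(\text{number of 120-cells})>4$, one gets $b_2^+(N)\geq 2$. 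Because the copies of $M$ separate $N$ into two \emph{isometric} pieces $N_\pm$ glued along rational homology spheres, Mayer--Vietoris gives $b_2^+(N)=b_2^+(N_+)+b_2^+(N_-)$ with equal summands, hence each is $\geq 1$; the tubing only attaches $4$-dimensional $1$-handles, which do not change $b_2^+$. The isometry of the two sides, which your sketch does not secure, comes from choosing a \emph{symmetric} colouring.

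Two further points are left unaddressed. First, the quantitative claim $2^9$ is not obtained by "nine doublings" driven by symmetry, but by the colouring technique for manifolds with right-angled corners: $W$ is tessellated into $2\cdot 4=8$ 120-cells, a (computer-found) symmetric $6$-colouring of its facets yields a closed manifold tessellated into $2^6\cdot 8=2^9$ cells, and one verifies (again by computer) that $5$ colours do not suffice. Second, the reason only indices $11$ and $28$ are usable among the six $L$-spaces is a concrete obstruction, not mere "compatibility": for the other four, the manifold with corners produced by the construction has non-embedded facets (a facet adjacent to itself), so it admits no colouring at all. Both facts are established computationally in the paper and cannot be replaced by the heuristic symmetry argument you propose.
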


The manifolds in the statement are built by colouring 4-manifolds with right-angled corners tessellated in 120-cells (see Section \ref{sec:building_4_man}) and are explicitly described in Section \ref{sec:concrete_examples}. It is possible to generalise the notion of colouring lowering the number of 120-cells in the tessellation to $2^8$; we present this construction in Section \ref{sec:generalised_colouring}, but we do not go into the details.
The Betti numbers with coefficients in $\mathbb{R}$ and $\mathbb{Z}_2$ of the explicit examples that we build can be found in Tables \ref{table:homology_N_11} - \ref{table:homology_N_28} -  \ref{table:homology_M_11} - \ref{table:homology_M_28}. We also tried to obtain the integral homology, but the computation was too intensive for our computational resources.
We point out that dodecahedral manifolds satisfy the hypotheses of \cite[Theorem 1.1]{KRS} and therefore this theorem can be used to prove that they embed in hyperbolic 4-manifolds, but the use of the construction of \cite{M} allows us to describe the 4-manifolds explicitly.

The proof of Theorem \ref{thm:classdodman} is achieved by rigorous computer-assisted computations. In particular, we make use of the code written by Nathan Dunfield \cite{D} and SnapPy \cite{snappy} in a Sage \cite{sagemath} environment. All the code used is available at \cite{codice}, and it can be used to check if a given manifold is an $L$-space.

The proof of Theorem \ref{cor:120cellmanifold} is also computer-assisted. We make use of Regina \cite{regina} in a Sage \cite{sagemath} environment, and in particular of modules written by Tom Boothby, Nathann Cohen, Jeroen Demeyer, Jason Grout, Carlo Hamalainen, and William Stein. All the code used is available at \cite{codice}.
\begin{table}
\begin{center}
\begin{tabular}{c || c | c | c | c }
Index & N. dod. & Volume & $H_1$ & $L$-space \\
\hline
\rowcolor{lightgray} 0 & 2 & 8.612 & Z/11 + Z/11 & Yes \\
1 & 2 & 8.612 & Z/87 & No \\
\rowcolor{lightgray} 2 & 2 & 8.612 & Z/4 + Z/28 & Yes \\
3 & 2 & 8.612 & Z/2 + Z/2 + Z/2 + Z/2 & No \\
4 & 2 & 8.612 & Z/3 + Z/15 & No \\
5 & 4 & 17.225 & Z/2 + Z/4 + Z/180 & No \\
6 & 4 & 17.225 & Z/714 & No \\
7 & 4 & 17.225 & Z/2 + Z/2 + Z/44 & No \\
\rowcolor{lightgray} 8 & 4 & 17.225 & Z/7 + Z/182 & Yes \\
9 & 4 & 17.225 & Z/4 + Z/44 & No \\
10 & 4 & 17.225 & Z/2 + Z/4 + Z/60 & No \\
\rowcolor{lightgray} 11 & 4 & 17.225 & Z/2 + Z/2 + Z/120 & Yes \\
12 & 4 & 17.225 & Z/12 + Z/12 & No \\
13 & 4 & 17.225 & Z/2 + Z/2 + Z/144 & No \\
14 & 4 & 17.225 & Z/2 + Z/4 + Z/4 + Z/4 & No \\
\rowcolor{lightgray} 15 & 4 & 17.225 & Z/513 & Yes \\
16 & 4 & 17.225 & Z/4 + Z/4 + Z/8 & No \\
17 & 4 & 17.225 & Z/2 + Z/2 + Z/2 + Z/2 + Z/4 & No \\
18 & 4 & 17.225 & Z/4 + Z/4 + Z/8 & No \\
19 & 4 & 17.225 & Z/4 + Z/4 + Z/8 & No \\
20 & 4 & 17.225 & Z/2 + Z/4 + Z/8 & No \\
21 & 4 & 17.225 & Z/2 + Z/4 + Z/8 & No \\
22 & 4 & 17.225 & Z/4 + Z/4 + Z/8 & No \\
23 & 4 & 17.225 & Z/2 + Z/2 + Z/56 & No \\
24 & 4 & 17.225 & Z/4 + Z/4 + Z/8 & No \\
25 & 4 & 17.225 & Z/2 + Z/2 + Z/2 + Z/4 + Z/4 & No \\
26 & 4 & 17.225 & Z/2 + Z/2 + Z/8 + Z/8 & No \\
27 & 4 & 17.225 & Z/4 + Z/4 + Z/8 & No \\
\rowcolor{lightgray} 28 & 4 & 17.225 & Z/2 + Z/2 + Z/8 + Z/8 & Yes \\
        
\end{tabular}
\vspace{.2 cm}
\nota{The manifolds in $\mathscr{D}$: the hyperbolic 3-manifolds tessellated with four or less right-angled dodecahedra that are rational homology spheres. The indexing is the one provided by SnapPy, where $\mathscr{D}$ can be accessed by typing \texttt{CubicalOrientableClosedCensus(betti=0)}. We list the index of the manifold as given in the SnapPy census, the number of right-angled dodecahedra in its tessellation, its approximated volume, its first homology and whether it is an $L$-space or not. Compare with Table \ref{table:dodecahedralmanifoldswithorderable}.} 
\label{table:dodecahedralmanifolds}
\end{center}
\end{table}
\newline

\textbf{Structure of the paper.} Section \ref{sec:finding_L_spaces} is dedicated to the proof of Theorem \ref{thm:classdodman}. In Section \ref{section:L_space_intervals} we recall the definition of $L$-space and the main theorem of \cite{RR}. In Section \ref{sec:algorithm_finding_L_spaces}, we recall the techniques used by Dunfield in \cite{D} to classify the $L$-spaces in $\mathscr{Y}$. In Section \ref{section:proving_L_space}, we elaborate on these techniques and describe an algorithm that can be used to prove that a hyperbolic rational homology sphere is an $L$-space. In Section \ref{sec:classification_dodecahedral_manifolds}, we complete the proof of Theorem \ref{thm:classdodman}.

Section \ref{sec:building_4_man} contains the details of the construction necessary for the proof of Theorem \ref{cor:120cellmanifold}. In Section \ref{subsection:embedding dodecahedral} we recall the general theory of manifolds with right-angled corners and colourings; then, in Section \ref{sec:concrete_examples}, we move to the explicit construction.

Appendices \ref{appendix:alg}-\ref{appendix:example} contain a detailed description of the algorithm used in Section \ref{sec:algorithm_finding_L_spaces}, with some examples.
\\

\textbf{Acknowledgments.} We would like to thank Nathan Dunfield, Matthias Goerner, Francesco Lin, Alan Reid, Stefano Riolo, and Leone Slavich for the useful discussions. We also thank Nathan Dunfield for all the code he released under the public domain, without which this work would have been much harder (if not impossible). We also thank the referee for the useful comments.

\section{Finding $L$-spaces}\label{sec:finding_L_spaces}
We recall the definition of \emph{$L$-space}.
\begin{definition}\label{defn:Lspace}
A rational homology 3-sphere (\qhs{} for short) $M$ is an $L$-space if $\widehat{HF}(M)$ is a free abelian group whose rank coincides with $|H_1(M,\Z)|$.
\end{definition}
In this definition, $\widehat{HF}(M)$ denotes the hat-version of the Heegaard Floer homology of $M$. Heegaard Floer homology is a topological invariant of $3$-manifolds introduced by Ozsv\'ath and Szab\'o in \cite{OS3} that has found many important and profound applications in low dimensional topology (see \cite{J} and \cite{H} for some examples).
Examples of $L$-spaces are given by the $3$-sphere, lens spaces or more generally any elliptic space \cite[Proposition~2.3]{OS1}. 
When $M$ is a rational homology sphere, it is proved in  \cite[Proposition~5.1]{OS2} that $\rank( \widehat{HF}(M))\geq |H_1(M,\Z)|$ holds and therefore $L$-spaces have minimal Heegaard Floer homology. 
These spaces have been widely investigated recently and they are one of the objects of study of the following conjecture:
\begin{conj}[$L$-space conjecture]\label{conj:Lspaceconj}
For an irreducible orientable rational homology $3$-sphere $M$, the following are
equivalent:
\begin{enumerate}[label=\arabic*)]
    \item  $M$ supports a co-orientable taut foliation;
    \item  $M$ is not an $L$-space;
    \item  $\pi_1(M)$ is left-orderable.
\end{enumerate}
\end{conj}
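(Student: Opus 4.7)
The statement above is the celebrated $L$-space conjecture, which remains wide open in its full generality; my plan is therefore to describe how one would attack the three implications separately, explaining what is known, what remains open, and where the genuine obstacle lies.

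For $(1)\Rightarrow(2)$ I would follow the route of Ozsv\'ath-Szab\'o combined with the non-vanishing theorem of Kronheimer-Mrowka-Ozsv\'ath-Szab\'o. Given a co-orientable taut foliation $\mathcal{F}$ on $M$, Eliashberg-Thurston perturb $\mathcal{F}$ into a pair of contact structures realising $M$ as a boundary component of a symplectic semi-filling; KMOS then show that the contact class in $\widehat{HF}$ with twisted coefficients is non-zero, which forces $\mathrm{rank}(\widehat{HF}(M))>|H_1(M;\Z)|$ and hence $M$ cannot be an $L$-space. This direction is by now standard, and I would essentially cite it.

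For $(3)\Rightarrow(2)$ (or directly $(3)\Rightarrow(1)$) I would try to promote a left-ordering of $\pi_1(M)$ to an action $\pi_1(M)\to\mathrm{Homeo}_+(\R)$ and, following Calegari-Dunfield, produce an essential lamination or branched surface invariant under this action that can be thickened to a taut foliation. Here I would lean on the structural results of Boyer-Gordon-Watson for Seifert fibered spaces and of Hanselman-Rasmussen-Rasmussen-Watson for graph manifolds, where the JSJ decomposition lets one assemble orderings and foliations piece by piece. Both approaches are essentially incomplete for hyperbolic atoroidal manifolds, which is precisely the class that $\mathscr{D}$ lives in.

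The main obstacle, and the reason the conjecture is still open, is the implication $(2)\Rightarrow(3)$: one must extract an algebraic object (a left-invariant order on $\pi_1(M)$) from the purely homological fact that $\widehat{HF}(M)$ has minimal rank. No general mechanism is known. Even in the concrete setting of Theorem \ref{thm:classdodman}, verifying left-orderability for the $23$ non-$L$-space dodecahedral manifolds requires a case-by-case construction of $\widetilde{\mathrm{PSL}}(2,\R)$ or circle representations of their fundamental groups. A conceptual proof would require a bridge between Heegaard Floer homology and the dynamics of $\pi_1(M)$-actions on $\R$ or $S^1$ that is at present missing from the literature, so I would not expect my sketch to yield a full proof of the conjecture, only of its three individually tractable implications.
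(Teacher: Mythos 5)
The statement you were asked about is a \emph{conjecture}, not a theorem: the paper offers no proof of it, and none exists in the literature. Your proposal correctly recognises this, and your survey of the status of the three implications is accurate and consistent with how the paper treats the conjecture. The only piece the paper actually uses is the implication $(1)\Rightarrow(2)$, recorded as Theorem \ref{CTF implica NLS} and attributed to Ozsv\'ath--Szab\'o, Bowden, and Kazez--Roberts; your sketch of that implication via Eliashberg--Thurston perturbation and the non-vanishing of the (twisted) contact invariant is the standard argument. One small point worth adding: the reason the paper cites Bowden and Kazez--Roberts alongside Ozsv\'ath--Szab\'o is that a general taut foliation is only $C^0$, while the original Eliashberg--Thurston approximation theorem requires $C^2$ regularity; Bowden and Kazez--Roberts independently extended the approximation to the $C^0$ case, which is needed for the implication to hold in the generality stated. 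Your remarks on $(3)\Rightarrow(1)$ and $(2)\Rightarrow(3)$ correctly identify where the conjecture is genuinely open, and they are consistent with the paper's use of Dunfield's algorithms to check orderability and foliations case by case for the manifolds in $\mathscr{D}$ (Table \ref{table:dodecahedralmanifoldswithorderable}), where the results are merely verified to be \emph{consistent with} the conjecture rather than derived from it.
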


For the definition of \emph{co-orientable taut foliation} we refer to \cite{Bow} and \cite{KR}; for the one of \emph{left-orderable} we refer to \cite{BGW}.

The equivalence between $1)$ and $2)$ was conjectured by Juh\'asz in \cite{J}, while the equivalence between $2)$ and $3)$ was conjectured by Boyer, Gordon and Watson in \cite{BGW}.

Concerning this conjecture, for the purposes of this paper (see Section \ref{section:proving_L_space})  we only point out the following theorem, which is a consequence of the works of Oszváth-Szábo \cite{OS}, Bowden \cite{Bow} and Kazez-Roberts \cite{KR}. 

\begin{theorem}\label{CTF implica NLS}
Let $M$ be a rational homology sphere that supports a co-orientable taut foliation. Then $M$ is not an $L$-space.
\end{theorem}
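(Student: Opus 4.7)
The plan is to chain together three results from the literature. First, I would invoke the perturbation theorem of Eliashberg--Thurston in the form extended to $C^0$ foliations by Bowden and, independently, by Kazez--Roberts: given the co-orientable taut foliation $\mathcal{F}$ on $M$, there exists a positive contact structure $\xi$ that $C^0$-approximates the tangent distribution $T\mathcal{F}$ and is weakly symplectically semi-fillable. This is the essential geometric output of the taut foliation hypothesis.

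Second, I would apply the Ozsv\'ath--Szab\'o non-vanishing theorem for the Heegaard Floer contact invariant: for any weakly fillable contact structure $\xi$ on a closed oriented 3-manifold $Y$, the contact class $c(\xi) \in \widehat{HF}(-Y)$ is non-zero and, moreover, it comes from a non-zero class in $HF^+_{\mathrm{red}}(-Y)$ via the natural maps relating the Heegaard Floer flavours. Applied to $Y=M$, this yields $HF^+_{\mathrm{red}}(-M)\neq 0$.

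Third, I would translate this into the failure of the $L$-space property. For a rational homology sphere, the long exact sequence linking $\widehat{HF}$, $HF^+$, and $HF^\infty$ shows that $HF^+_{\mathrm{red}}(-M)\neq 0$ is equivalent to $\mathrm{rank}(\widehat{HF}(-M)) > |H_1(M;\mathbb{Z})|$, which by the orientation-reversal symmetry of $\widehat{HF}$ is the same as $\mathrm{rank}(\widehat{HF}(M)) > |H_1(M;\mathbb{Z})|$. Hence $M$ fails Definition \ref{defn:Lspace} and is not an $L$-space.

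The principal obstacle, and the reason the theorem must be attributed to three separate papers, is the regularity issue in the first step: the classical Eliashberg--Thurston perturbation requires $\mathcal{F}$ to be at least $C^2$, while the co-orientable taut foliations that arise naturally in low-dimensional topology are often only $C^0$. It is precisely the contributions of Bowden and of Kazez--Roberts that close this gap, and once they are in place the remaining two steps are direct applications of established Heegaard Floer machinery.
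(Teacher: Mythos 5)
Your proposal is correct and is exactly the argument the paper has in mind: the paper states this theorem without proof, attributing it to Ozsv\'ath--Szab\'o, Bowden, and Kazez--Roberts, and your three steps (the $C^0$ Eliashberg--Thurston perturbation from \cite{Bow} and \cite{KR}, the non-vanishing of the contact invariant for weakly (semi-)fillable contact structures from \cite{OS}, and the translation of $HF^+_{\mathrm{red}}(-M)\neq 0$ into the failure of the $L$-space condition) are precisely how those three references combine. Your closing remark about the $C^0$ regularity gap being the reason for citing Bowden and Kazez--Roberts also matches the paper's intent.
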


As the following proposition states, $L$-spaces can also be used to prove that a $4$-manifold has vanishing Seiberg-Witten invariants. For a proof we refer to \cite{KM} and also \cite[Proposition 2.2]{AL}.

\begin{prop}\label{separating L-space}
Let $N$ be a $4$-manifold given as $N=N_1\cup_{M} N_2$, where $M$ is an $L$-space and where $b_2^+(N_i) \geq 1$ for $i=1,2$. Then all the Seiberg-Witten invariants of
N vanish.
\end{prop}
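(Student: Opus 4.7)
The plan is to use the Kronheimer--Mrowka gluing formula for Seiberg--Witten invariants along a three-dimensional hypersurface, combined with the characterisation of $L$-spaces in terms of monopole Floer homology.

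First, I would pass from Heegaard Floer homology to monopole Floer homology. By the equivalence of the two theories (Kutluhan--Lee--Taubes, or Colin--Ghiggini--Honda together with Taubes), the hypothesis that $M$ is a Heegaard Floer $L$-space is equivalent to the statement that $\widehat{HM}_*(M)\to\overline{HM}_*(M)$ is an isomorphism, or equivalently that the reduced monopole Floer homology $HM_*^{\mathrm{red}}(M)$ vanishes for every $\spinc$ structure on $M$. This is the form of the ``$L$-space'' property that interacts directly with the gauge theory on the two pieces $N_1$ and $N_2$.

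Next, I would apply the gluing principle from Kronheimer--Mrowka (the monograph \emph{Monopoles and Three-Manifolds}): a $\spinc$-structure $\mathfrak{s}$ on $N$ restricts to $\spinc$-structures $\mathfrak{s}_i$ on $N_i$, and the Seiberg--Witten invariant $SW_N(\mathfrak{s})$ can be expressed as a pairing between relative invariants $\Psi(N_1,\mathfrak{s}_1)$ and $\Psi(N_2,\mathfrak{s}_2)$ in the monopole Floer homology of $M$. The crucial point is that the assumption $b_2^+(N_i)\geq 1$ on \emph{both} sides forces each relative invariant to factor through the \emph{reduced} monopole Floer group $HM_*^{\mathrm{red}}(M,\mathfrak{s}_i|_M)$, because a generic metric on the $N_i$ admits a positive scalar curvature perturbation killing the constant-curvature reducibles on the cylindrical end, so the relative invariant lands in the image of the ``from'' map $HM_*^{\mathrm{red}} \to \widehat{HM}_*$.

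Putting these two ingredients together: the pairing computing $SW_N(\mathfrak{s})$ factors through $HM_*^{\mathrm{red}}(M)=0$, hence vanishes for every $\spinc$ structure. This gives the result. The key step—and the only one where anything needs to be verified carefully—is the identification of the locus in which the relative invariants live when $b_2^+(N_i)\geq 1$; the rest is a citation to the monopole Floer machinery of \cite{KM} and the corresponding argument already carried out in \cite[Proposition~2.2]{AL}, and indeed the proof can be concluded simply by quoting those references.
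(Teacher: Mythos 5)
Your argument is exactly the one the paper relies on: the paper gives no proof of this proposition but refers to \cite{KM} and \cite[Proposition 2.2]{AL} for the gluing/vanishing statement, and separately invokes the Kutluhan--Lee--Taubes (or Taubes plus Colin--Ghiggini--Honda) equivalence to pass from the Heegaard Floer to the monopole Floer notion of $L$-space, just as you do. Your sketch of the mechanism (relative invariants factoring through $HM^{\mathrm{red}}(M)=0$ when $b_2^+(N_i)\geq 1$) is the content of the cited results, so the proposal is correct and essentially identical in approach.
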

To be precise, the right notion of $L$-space to use in the previous proposition is the one of \emph{monopole Floer} $L$-space, but the monopole Floer/Heegaard Floer correspondence, proved by the works of Kutluhan-Lee-Taubes \cite{KLT1, KLT2, KLT3, KLT4, KLT5}, implies that a manifold is a monopole Floer $L$-space if and only if it is an $L$-space in the sense of Definition \ref{defn:Lspace}. A different proof of the correspondence monopole Floer/Heegaard Floer can be obtained by combining the works of Taubes \cite{Tau1, Tau2, Tau3, Tau4, Tau5} and Colin-Ghiggini-Honda \cite{CGH}.

\subsection{$L$-space intervals}\label{section:L_space_intervals}

In this section we recall some definitions and the main result of \cite{RR}, that will be a key ingredient of our algorithm. 

Let $Y$ be a rational homology solid torus (\qht{} for short), \emph{i.e.}\ an oriented compact $3$-manifold with toroidal boundary and $H_{*}(Y,\Q)\cong H_{*}(\D^2\times S^1,\Q)$.
We define the set of \emph{slopes} in $Y$
$$
Sl(Y)=\{ \alpha\in H_1(\partial Y,\Z)\,| \,\alpha\text{ is primitive}\}/ \pm 1.
$$
We are interested in the Dehn fillings of $Y$ and it is a well-known fact (see for example \cite[Section 10.1.1]{Martellibook}) that each Dehn filling of $Y$ is determined by a slope $[\alpha]\in Sl(Y)$ and conversely that each slope $[\alpha]\in Sl(Y)$ determines a Dehn filling of $Y$, that we will denote with $Y(\alpha)$.
Since $Y$ is a \qht{}, there exists a distinguished slope called \emph{homological longitude}, that is defined as $[l]$, where $l\in H_1(\partial Y,\Z)$ is a generator of the kernel of the map
$$
\iota:H_1(\partial Y,\Z)\rightarrow H_1(Y,\Z)
$$
induced by the inclusion $i:\partial Y\hookrightarrow Y$; since $l$ is unique up to sign, the class $[l]\in Sl(Y)$ is well-defined.
For example, when $Y$ is the exterior of a knot $K$ in $S^3$, the homological longitude $[l]$ coincides with the slope defined by the (canonical) longitude of $K$.

Since we want to study the Dehn fillings of $Y$ that are $L$-spaces we define the set of \emph{$L$-spaces filling slopes} of $Y$
$$
L(Y)=\{[\alpha]\in Sl(Y)\,|\, Y(\alpha) \text{ is an $L$-space}\}
$$
and we say that $Y$ is \emph{Floer simple} if there exist at least two Dehn fillings of $Y$ that are $L$-spaces, \emph{i.e.}\ if $|L(Y)|>1$.

It turns out that if $Y$ is Floer simple, then it is possible to describe explicitly the set of $L$-spaces filling slopes of $Y$. For example, if $Y$ is the exterior of a knot $K$ in $S^3$ we can fix $(\mu_K,\lambda_K)$ the usual meridian-longitude basis of $H_1(\partial Y,\Z)$ and we have a bijection 

\begin{align*}
&Sl(Y)&\longleftrightarrow& &\Q&\cup \{\infty\}\\
[\alpha]&=[p\mu_K+q\lambda_K]&\longleftrightarrow& &&\frac{p}{q}. 
\end{align*}
Observe that in this case $Y$ is Floer simple if and only if it has a non-trivial $L$-space filling, since $\{\infty\}$ yields $S^3$ that is an $L$-space. Moreover, by considering the mirror of $K$ if necessary, we can suppose that if $Y$ has a non-trivial $L$-space filling slope then it is a positive one and in this case it has been proved in \cite{KMOS} that
$$
L(Y)=[2g(K)-1, \infty]\cap \left(\Q\cup \{\infty\}\right),
$$ 
where $g(K)$ denotes the genus of $K$.

When $Y$ is a more general \qht{} the structure of the set $L(Y)$ can be computed by knowing the \emph{Turaev torsion} of $Y$.
We only recall some properties of the Turaev torsion and refer the reader to \cite{T} for the precise definitions.

Fix an identification $H_1(Y,\Z)=\Z \oplus T$ where $T$ is the torsion subgroup and denote with $\phi:H_1(Y,\Z)\rightarrow \Z$ the projection induced by this identification. 
The Turaev torsion of $Y$ is an invariant of $Y$ that can be normalised so to be written as a formal sum
$$
\tau(Y)=\sum_{\substack{h\in H_1(Y,\Z) \\ \phi(h)\geq 0}}a_h h
$$
where $a_h$ is an integer for all $h\in H_1(Y,\Z)$, $a_0\ne 0$ and $a_h=1$ for all but finitely many $h\in H_1(Y,\Z)$. This invariant generalises the Alexander polynomial of $Y$ and in fact (see \cite[Section II.5]{T}) when $H_1(Y,\Z)$ has no torsion it holds
$$
\tau(Y)=\frac{\Delta(Y)}{1-t}\in \Z[[t]]
$$
where $(1-t)^{-1}$ is expanded as an infinite sum in positive powers of $t$ and the Alexander polynomial $\Delta(Y)$ is normalised so that $\Delta(Y)\in \Z[t]$, $\Delta(Y)(0)\ne 0$ and $\Delta(Y)(1)=1$. Notice that in this case the coefficients of $\tau(Y)$ are eventually equal to $1$ since they are eventually equal to the sum of all the coefficients of $\Delta(Y)$, and this value is exactly $\Delta(Y)(1)$.

Given $\tau(Y)$  we define the following subset of $H_1(Y, \Z)$
$$
D^{\tau}_{>0}=\{x-y\,| x\notin S[\tau(Y)],\, y\in S[\tau(Y)] \text{ and } \phi(x)>\phi(y)\}\cap \iota(H_1(\partial Y, \Z))
$$
where $S[\tau(Y)]=\{ h\in H_1(Y,\Z)\,|\, a_h\ne 0 \}$ is the \emph{support} of $\tau(Y)$, and where \[\iota:H_1(\partial Y,\Z)\rightarrow H_1(Y,\Z)\] is induced from the inclusion $\partial Y\hookrightarrow Y$.

We are now ready to state the main result of \cite{RR}. 

\begin{theorem}[{\cite[Theorem 1.6]{RR}}] \label{theorem RR}
If $Y$ is Floer simple, then either
\begin{itemize}
    \item $D_{>0}^\tau(Y)=\emptyset$ and $L(Y)=Sl(Y)\setminus [l]$, or
    \item $D_{>0}^\tau(Y)\ne\emptyset$ and $L(Y)$ is a closed interval whose endpoints are consecutive elements in $\iota^{-1}(D_{>0}^\tau(Y))$.
\end{itemize}
\end{theorem}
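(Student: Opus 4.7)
The plan is to prove this via bordered Heegaard Floer homology, using the pairing theorem of Lipshitz--Ozsv\'ath--Thurston. The first step is to associate to $Y$ its type-D bordered invariant $\widehat{CFD}(Y)$, a module over the torus algebra, and to reduce the computation of $\widehat{HF}(Y(\alpha))$ for any slope $\alpha$ to a box-tensor product with the type-A invariant of the filling solid torus. The $L$-space condition on $Y(\alpha)$ then becomes a purely algebraic condition on this tensor product, namely that its total rank over $\Z/2$ coincides with $|H_1(Y(\alpha),\Z)|$.

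The key structural step is to show that the Floer simple hypothesis forces $\widehat{CFD}(Y)$ into a very restricted form (a ``loop-type'' decomposition, in the later language of Hanselman--Rasmussen--Watson). My strategy is to observe that, by the pairing theorem, the total rank of $\widehat{HF}(Y(\alpha))$ is computed as a count of intersections between a graphical model of the bordered invariant and a line of slope $\alpha$ on $\partial Y$. Two slopes simultaneously realising the minimal rank $|H_1|$ severely constrains the module, forcing all unstable chains and branchings to be absent. With this in hand, the $L$-spaceness of $Y(\alpha)$ reduces to a combinatorial condition: whether the line of slope $\alpha$ separates certain distinguished pairs of points on $\partial Y$ that come from the module.

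The bridge to the Turaev torsion is via decategorification. Summed over $\text{spin}^c$ structures, the graded Euler characteristic of $\widehat{HF}$ recovers $\tau(Y)$ up to a standard normalisation \cite{T}, so the support $S[\tau(Y)]$ records the $\text{spin}^c$-structures carrying nonzero generators of $\widehat{CFD}(Y)$. The group $\iota(H_1(\partial Y,\Z))$ acts on $H_1(Y,\Z)$ by the ``change of $\text{spin}^c$-structure when pushing a boundary class into the interior,'' and under this identification the set $D_{>0}^\tau(Y)$ consists precisely of those boundary classes that connect a generator inside $S[\tau(Y)]$ to one outside it while strictly increasing the $\phi$-grading, i.e.\ witnesses of cancellation in the tensor product. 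The filling $Y(\alpha)$ is an $L$-space exactly when no such witness is parallel to $\alpha$. When $D_{>0}^\tau(Y)=\emptyset$ the only slope for which the pairing degenerates is the homological longitude, giving $L(Y)=Sl(Y)\setminus[l]$; when $D_{>0}^\tau(Y)\neq\emptyset$ the allowed slopes form a closed arc in the cyclic order on $Sl(Y)$, delimited by two consecutive elements of $\iota^{-1}(D_{>0}^\tau(Y))$.

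The main obstacle will be the structural theorem for $\widehat{CFD}(Y)$ in the Floer simple case. Without invoking the subsequent immersed-curve technology, one has to extract the loop-type decomposition directly from the existence of two $L$-space slopes, which requires a careful analysis of the differentials in the bordered complex together with the identification of its graded Euler characteristic with $\tau(Y)$. A secondary, more bookkeeping-flavoured difficulty is matching the $\text{spin}^c$-grading conventions so that the algebraic ``separation'' criterion on slopes coincides on the nose with the combinatorial set $D_{>0}^\tau(Y)$ as defined, including the asymmetry introduced by the choice of sign of $\phi$.
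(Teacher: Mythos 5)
First, a point of logistics: the paper does not prove this statement at all. It is Theorem~1.6 of Rasmussen--Rasmussen \cite{RR}, imported verbatim; the only thing the authors add is the explanation, after the statement, of how to read ``closed interval'' once $Sl(Y)$ is identified with $\Q\cup\{\infty\}$. So there is no in-paper argument to compare yours against, and the relevant benchmark is the proof in \cite{RR} itself.

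Measured against that, your outline is essentially a reconstruction of the route actually taken in \cite{RR}: bordered Floer homology, the pairing theorem to convert Dehn fillings into box-tensor products with the solid-torus type-A module, a structural normal form for $\widehat{CFD}(Y)$ forced by the existence of two $L$-space fillings, and decategorification to the Turaev torsion to match the combinatorics of $D^{\tau}_{>0}(Y)$ with the obstruction to a slope being an $L$-space slope. As a roadmap this is sound, but as a proof it is not yet one: the two steps you flag as ``obstacles'' are the entire mathematical content of the theorem. The assertion that Floer simplicity forces $\widehat{CFD}(Y)$ into standard (loop-type) form with no branchings occupies the technical core of \cite{RR}, and your sketch does not indicate how two slopes realising the minimal rank actually kill the extra generators and unstable chains; this requires the $\spinc$-refined inequality $\rank\widehat{HF}(Y(\alpha),\mathfrak{s})\geq 1$ together with a genuine analysis of the differentials, not just the total-rank count you invoke. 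Likewise, the identification of your ``separation'' criterion with $D^{\tau}_{>0}$ --- in particular that $L(Y)$ is a \emph{closed} interval whose endpoints are \emph{consecutive} elements of $\iota^{-1}(D^{\tau}_{>0}(Y))$, and the degenerate case $D^{\tau}_{>0}(Y)=\emptyset$ giving exactly $Sl(Y)\setminus[l]$ --- is asserted rather than derived, and the sign/grading bookkeeping you mention is precisely where such an argument can silently go wrong. Right strategy, correct attribution of where the difficulty lies, but the proposal currently defers exactly the parts that need proving.
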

Recall that $[l]\in Sl(Y)$ denotes the homological longitude of $Y$, and since the corresponding Dehn filling of $Y$ is not a \qhs{}, by the definition of $L$-space it is obvious that $[l]\notin L(Y)$. We explain more precisely the second part of the statement of the theorem. Once we fix a basis $(\mu,\lambda)$ for $H_1(\partial Y,\Z)$, exactly as in the case of the exterior of a knot, we can define a map 

\begin{align*}
&H_1(\partial Y,\Z)&\longrightarrow& &\Q&\cup \{\infty\}\\
&p\mu+q\lambda&\longrightarrow& & &\frac{p}{q} 
\end{align*}
and this association induces an identification between $Sl(Y)$ and $\Q \cup \{\infty\}$. If the set $D^{\tau}_{>0}$ is not empty, we can apply this map to the set $\iota^{-1}(D^{\tau}_{>0})$ and Theorem \ref{theorem RR} states that if $Y$ is Floer simple then $L(Y)$ is a closed interval in $Sl(Y)=\Q\cup \{\infty\}$ whose endpoints are consecutive elements in the image of $\iota^{-1}(D^{\tau}_{>0})$ in $\Q \cup \{\infty\}$.\label{L-spaces_interval}

\subsection{An algorithm for finding $L$-spaces}\label{sec:algorithm_finding_L_spaces}
In this section we describe an algorithm that can determine whether a hyperbolic rational homology sphere is an $L$-space. The two main ingredients of this algorithm are the Rasmussen-Rasmussen Theorem \ref{theorem RR} and part of the work of Dunfield in \cite{D}. 

In \cite{D} Dunfield considered a census $\mathscr{Y}$ of more than $300,000$ hyperbolic rational homology spheres. These are obtained as Dehn fillings of $1$-cusped hyperbolic $3$-manifolds that can be triangulated with at most $9$ ideal tetrahedra; the latter were enumerated by Burton \cite{B}. We denote the census of these \qht{}s with $\mathscr{C}$. The hyperbolic structure of a \qht{} is always intended on its interior; notice that such a metric is always finite-volume, see \cite[Proposition D.3.18]{BenedettiPetronio}.

Dunfield, while investigating the $L$-space conjecture for the manifolds in $\mathscr{Y}$, has in particular proved the following:

\begin{theorem}[{\cite[Theorem 1.6]{D}}]\label{theorem Dunfield}
Of the $307,301$ hyperbolic rational homology $3$-spheres in $\mathscr{Y}$ exactly $144,298$ $(47.0 \%)$ are $L$-spaces and $163,003$ $(53.0\%)$ are non-L-spaces.
\end{theorem}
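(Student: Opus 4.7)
The strategy is a large-scale computer-assisted census verification, combining the Rasmussen--Rasmussen criterion (Theorem \ref{theorem RR}) with tools that certify the failure of the $L$-space property. I would first arrange the census $\mathscr{Y}$ by realising each closed manifold $M \in \mathscr{Y}$ as a Dehn filling $Y(\alpha)$ of some $Y \in \mathscr{C}$; this is already the way Dunfield generated the census, so every $M$ comes equipped with at least one such presentation, and often several, giving flexibility in the analysis.

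For the $L$-space side of the count, I would exploit Floer-simpleness of the cusped parents. For each $Y \in \mathscr{C}$, compute the Turaev torsion $\tau(Y)$ from a triangulation, extract the support $S[\tau(Y)]$ and hence the set $D^\tau_{>0}(Y) \cap \iota(H_1(\partial Y,\Z))$. If one can exhibit two fillings of $Y$ that are already known to be $L$-spaces (e.g. lens spaces or small Seifert-fibered spaces recognisable from the filling data), then $Y$ is Floer simple, and Theorem \ref{theorem RR} pins down $L(Y)$ as an explicit closed interval in $Sl(Y) = \Q \cup \{\infty\}$. Any $M = Y(\alpha) \in \mathscr{Y}$ whose slope $\alpha$ lies in this interval is then certified as an $L$-space. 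A manifold that admits multiple presentations only needs to be confirmed via any single Floer-simple parent.

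For the non-$L$-space side, I would use Theorem \ref{CTF implica NLS}: to certify $M$ as not an $L$-space, exhibit a co-orientable taut foliation. Practically, one searches for such foliations via normal-surface-type algorithms, via transverse foliations arising from Seifert structures or sutured manifold decompositions, or by constructing them from left-orderings of $\pi_1(M)$ (and conversely using left-orderability as a witness when Conjecture \ref{conj:Lspaceconj} gives a one-way implication). Slopes outside the computed interval $L(Y)$ for every Floer-simple parent $Y$ of $M$ provide a complementary certificate, since if $M = Y(\alpha)$ with $Y$ Floer simple and $\alpha \notin L(Y)$ then $M$ is not an $L$-space.

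The main obstacle is making the two-sided certification exhaustive over all $307{,}301$ manifolds: every $M$ must be decided one way or the other, so one needs enough Floer-simple parents in $\mathscr{C}$ to cover all fillings, and enough foliation/ordering heuristics to catch every non-$L$-space. Edge cases — manifolds whose only presentations are via parents that fail to be Floer simple, or whose slopes land precisely at the interval endpoints — would require case-by-case handling, likely by switching triangulations, doing auxiliary small-slope Heegaard Floer computations directly, or appealing to recognisable geometric structures (Seifert, graph, or arithmetic pieces). The precise count $144{,}298 + 163{,}003 = 307{,}301$ is then the output of running this decision procedure, implemented in SnapPy/Regina with the code released in \cite{D}, over the entire census.
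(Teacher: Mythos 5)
This theorem is quoted verbatim from Dunfield \cite{D} and is not proved in the paper; the paper only summarises Dunfield's method, namely the bootstrapping procedure in which Theorem \ref{theorem RR} converts any two known $L$-space fillings of a parent in $\mathscr{C}$ into a complete description of $L(Y)$ (so that slopes inside the interval certify $L$-spaces and slopes outside it certify non-$L$-spaces), together with taut foliations via Theorem \ref{CTF implica NLS}. Your sketch matches that description, with one caution: left-orderability of $\pi_1(M)$ is \emph{not} a valid non-$L$-space certificate, since that implication is only conjectural (Conjecture \ref{conj:Lspaceconj}), so that branch of your procedure must be discarded and only the foliation and excluded-slope certificates retained.
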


The key idea that Dunfield used to prove Theorem \ref{theorem Dunfield} is to use Theorem \ref{theorem RR} to start a bootstrapping procedure. In fact by virtue of the Rasmussen-Rasmussen theorem, it is sufficient to know two $L$-spaces fillings of a rational homology solid torus in $\mathscr{C}$ to determine exactly its set of $L$-space filling slopes, and therefore to obtain information about the $L$-space status of the manifolds in $\mathscr{Y}$. This, together with the fact that many of the manifolds in $\mathscr{Y}$ admit multiple descriptions as fillings of manifolds in $\mathscr{C}$, allows to increase simultaneously the level of knowledge about the manifolds in $\mathscr{Y}$ and in $\mathscr{C}$.

Notice that the manifolds in $\mathscr{Y}$ have bounded volume: since they are fillings of $1$-cusped manifolds that are triangulated with at most $9$ ideal tetrahedra, their volume is bounded by $9v_3 < 9.14$, where $v_3$ is the maximal volume of a hyperbolic tetrahedron.

\subsection{Proving that a manifold is an $L$-space}\label{section:proving_L_space}
Following the bootstrap idea of Dunfield, we now present an algorithm that can be used to determine in certain cases whether a hyperbolic rational homology sphere is an $L$-space. Before describing the algorithm, we recall a definition:
\begin{definition}
A rational homology solid torus $Y$ is \emph{Turaev simple} when every coefficient of the Turaev torsion of $Y$ is either $0$ or $1$.
\end{definition}
In \cite[Proposition 1.4]{RR} it is proved that being Turaev simple is a necessary condition for being Floer simple. The converse is not true in general.

We also use the following notation: when $M$ is a \qhs{}, we say that the \emph{L-space
value of $M$} is \texttt{True} if $M$ is an $L$-space and \texttt{False} otherwise.

The algorithm takes in input a rational homology sphere $M$ and returns  \texttt{True} if a proof that $M$ is an $L$-space is found and \texttt{False} otherwise.

The rough idea of the algorithm is the following: 

\begin{enumerate}
    \item We start with $M$, a hyperbolic \qhs{}. If it belongs to the Dunfield census, we return its $L$-space value; otherwise we go on with Step \ref{item:FindingDrilling}.;

    \item \label{item:FindingDrilling} we drill the shortest geodesic given by SnapPy out of $M$ so to obtain $Y$, a hyperbolic \qht{} that is Turaev simple. We fix a meridian-longitude basis of $\partial Y$ so that the filling $\sfrac{1}{0}$ on $Y$ gives back $M$, and we identify $Sl(Y)$ with $\mathbb{Q} \cup \{\infty\}$;
    
    \item \label{item:ComputeInterval} using a script provided by Dunfield in \cite{D}, we compute $\iota^{-1}(D_{>0}^\tau(Y))$ and we select an interval $\mathcal{I}$ in $Sl(Y)$ whose endpoints are consecutive elements in $\iota^{-1}(D_{>0}^\tau(Y))$ and such that $\sfrac{1}{0}\in \mathcal{I}$. In the case $D_{>0}^\tau(Y)=\emptyset$ we take $\mathcal{I}$ as $Sl(Y)\setminus{[l]}$, where $[l]$ is the homological longitude of $Y$;
    
    \item we search for two slopes in $\mathcal{I}$ so that the associated fillings are hyperbolic and have minimal volumes. We denote these fillings with $M_1$ and $M_2$. By Theorem \ref{theorem RR}, if we prove that they are $L$-spaces, then $M$ also is;
    
    \item we start two new instances of the algorithm with $M_1$ and $M_2$; if they both return \texttt{True}, we return \texttt{True}; otherwise, we return \texttt{False}.
    
\end{enumerate}

The details of the algorithm and one example can be found in Appendices \ref{appendix:alg}-\ref{appendix:example}. For the moment, let us underline the following facts:

\begin{itemize}

    \item the answer \texttt{True} is rigorous, but we stress that the answer \texttt{False} does not imply that $M$ is not an $L$-space. In particular, the rational homology torus $Y$ can be not Floer simple. We simply assure that it is Turaev simple, since this is an easier condition to check. This means that, even if we start with an actual $L$-space, the algorithm can return \texttt{False}: $M$ could be the only filling of $Y$ that is an $L$-space;
    
    \item we need to avoid that the algorithm enters an infinite loop: for example, it can happen that $M$ is the lowest-volume filling of $Y$ inside $\mathcal{I}$. This would cause an infinite loop. In the code there is a check to avoid such problems;
    
    \item we do not know whether a rational homology torus $Y$ as in Step \ref{item:FindingDrilling}.\ always exists, even if we assume that $M$ is an $L$-space. See Questions \ref{question:floer_simple} - \ref{question:floer_simple_geodesic};

    \item in Step \ref{item:ComputeInterval}.\ there could be two of such intervals; this happens when $\sfrac{1}{0}$ is an element of $\iota^{-1}(D_{>0}^\tau(Y))$. In this case, we look for $L$-spaces in both such intervals;
    
    \item there is no guarantee that the volumes of $M_1$ and $M_2$ are smaller than the volume of $M$. In particular, the algorithm is not guaranteed to end;
    
    \item we need the hyperbolicity of $M$ for two reasons: SnapPy can drill curves only when a manifold has a hyperbolic structure, and we use the hyperbolic volume;

    \item the key idea of the algorithm is to minimize the hyperbolic volume of $M_1$ and $M_2$ to approach the Dunfield census. While $M$ having small volume does not guarantee that $M$ is in $\mathscr{Y}$, in practice this condition makes the algorithm terminate quite fast. 
    
\end{itemize}

Despite these potential problems, the algorithm was powerful enough to prove the following:

\begin{prop}\label{prop:areLspaces}
The manifolds with index 0, 2, 8, 11, 15 and 28 in the census $\mathscr{D}$ (recall Definition \ref{defn:censusD}) are $L$-spaces (see Tables \ref{table:dodecahedralmanifolds} - \ref{table:dodecahedralmanifoldswithorderable}).\hfill{$\square$}
\end{prop}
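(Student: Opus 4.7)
The plan is straightforward but computational: apply the bootstrap algorithm of Section \ref{section:proving_L_space} to each of the six candidate manifolds and verify that it returns \texttt{True}. Since the algorithm's positive answer is rigorous (even though a \texttt{False} answer is not conclusive), this suffices to establish the proposition. Note that, whereas the two-dodecahedra manifolds have volume $\approx 8.612$ and so might plausibly be found directly in Dunfield's census $\mathscr{Y}$, the four-dodecahedra ones have volume $\approx 17.225 \gg 9.14$, so for them the bootstrap must be carried out for real.

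For each $M \in \{\mathscr{D}[0], \mathscr{D}[2], \mathscr{D}[8], \mathscr{D}[11], \mathscr{D}[15], \mathscr{D}[28]\}$, I would proceed as follows. First, use SnapPy to realise $M$ as a hyperbolic rational homology sphere and drill the shortest geodesic, obtaining a cusped manifold $Y$. Check that $Y$ is a \qht{} by computing $H_1(Y,\Z)$, fix a meridian-longitude basis of $\partial Y$ so that the filling slope $\sfrac{1}{0}$ returns $M$, and verify Turaev simplicity by computing $\tau(Y)$ with Dunfield's script. Next, compute the finite set $\iota^{-1}(D_{>0}^\tau(Y)) \subseteq \Q \cup \{\infty\}$ and extract the interval(s) $\mathcal{I} \subseteq Sl(Y)$ of consecutive endpoints containing $\sfrac{1}{0}$. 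Then enumerate slopes in $\mathcal{I}$ by increasing complexity, select two slopes $\alpha_1,\alpha_2 \ne \sfrac{1}{0}$ whose Dehn fillings $M_1 = Y(\alpha_1)$ and $M_2 = Y(\alpha_2)$ are hyperbolic with smallest possible volumes, and recursively run the algorithm on $M_1$ and $M_2$. If both recursive calls return \texttt{True}, Theorem \ref{theorem RR} implies that $\sfrac{1}{0} \in L(Y)$, so $M$ is an $L$-space; the recursion base case is that the relevant sub-manifold lies in $\mathscr{Y}$ and is identified there as an $L$-space by Theorem \ref{theorem Dunfield}.

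The main obstacles are all practical rather than theoretical. First, termination is not automatic: it could happen that every candidate filling has volume larger than $\Vol(M)$, or that the recursion cycles. This has to be handled by a cycle-check and by exploring enough slopes in $\mathcal{I}$ to find low-volume fillings that plug into $\mathscr{Y}$. Second, the drilled manifold $Y$ needs to be not just Turaev simple but actually Floer simple for the argument to locate an $L$-space interval, and a priori we only control the former; in practice the algorithm simply fails on the wrong choice of geodesic, so one may need to try drilling different short curves. Third, since Theorem \ref{theorem RR} only pins down $L(Y)$ once two $L$-space fillings are known, the recursive search must succeed on both $M_1$ and $M_2$ simultaneously, which limits the branching strategy.

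The upshot is that the proof is an execution of the algorithm: for each of the six listed manifolds, the implementation at \cite{codice}, running inside SnapPy/Sage with Dunfield's Turaev-torsion routines, produces an explicit drilling and explicit fillings $M_1, M_2$ belonging (after at most a few levels of recursion) to $\mathscr{Y}$, where Theorem \ref{theorem Dunfield} confirms the $L$-space property. An illustrative worked case is deferred to Appendix \ref{appendix:example}.
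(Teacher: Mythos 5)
Your proposal is correct and follows essentially the same route as the paper: the paper's ``proof'' of Proposition \ref{prop:areLspaces} is precisely the execution of the bootstrap algorithm of Section \ref{section:proving_L_space} (drill a short geodesic, check Turaev simplicity, compute $\iota^{-1}(D_{>0}^\tau(Y))$, recurse on two low-volume hyperbolic fillings in the selected interval, and bottom out in Dunfield's census via Theorem \ref{theorem RR}), with the implementation details and a worked drilling--filling tree given in Appendices \ref{appendix:alg}--\ref{appendix:example}. The practical caveats you list (non-guaranteed termination, Turaev simple vs.\ Floer simple, the need to certify both fillings) match the bullet points the paper itself records.
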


\subsection{Classification of Dodecahedral manifolds}\label{sec:classification_dodecahedral_manifolds}

We are left with proving that the manifolds in Proposition \ref{prop:areLspaces} are the only ones in $\mathscr{D}$ that are $L$-spaces. By virtue of Theorem \ref{CTF implica NLS}, if $M$ supports a co-orientable taut foliation then it is not an $L$-space. In \cite[Section 7]{D}, Dunfield introduced the notion of \emph{foliar orientation} and used it to construct co-orientable taut foliations on a given \qhs{}. He also provided an algorithm that searches for foliar orientations. Applying his algorithm, we are able to prove the following:

\begin{lemma}
The manifolds with index different from 0, 2, 8, 11, 15 and 28 in the census $\mathscr{D}$ are not $L$-spaces (see Tables \ref{table:dodecahedralmanifolds} - \ref{table:dodecahedralmanifoldswithorderable}).\hfill{$\square$}
\end{lemma}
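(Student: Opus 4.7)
The plan is to exploit Theorem \ref{CTF implica NLS}, which reduces the problem of showing that a rational homology sphere is not an $L$-space to producing a co-orientable taut foliation on it. Since doing this by hand for each of the 23 remaining manifolds in $\mathscr{D}$ would be unrealistic, I would turn to the foliar orientation machinery that Dunfield develops in \cite[Section 7]{D}: a foliar orientation on a triangulation is a combinatorial decoration whose existence guarantees the existence of a co-orientable taut foliation on the underlying manifold. In other words, once one has a triangulation of $M$ together with a foliar orientation on it, Dunfield's construction yields, entirely algorithmically, a co-orientable taut foliation, and then Theorem \ref{CTF implica NLS} rules out the $L$-space property.

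Concretely, the steps I would carry out are as follows. First, for each of the 23 manifolds in $\mathscr{D}$ not listed in Proposition \ref{prop:areLspaces}, extract a triangulation from SnapPy via the \texttt{CubicalOrientableClosedCensus(betti=0)} enumeration used to build $\mathscr{D}$. Next, feed each such triangulation to Dunfield's foliar orientation search, possibly after random retriangulations or isomorphism-signature-based perturbations in order to enlarge the search space. Whenever the algorithm returns a foliar orientation, invoke \cite[Section 7]{D} to obtain a co-orientable taut foliation, and then conclude via Theorem \ref{CTF implica NLS} that the manifold is not an $L$-space. Finally, assemble the 23 outputs in a table, and cross-check against the left-orderability data recorded in Table \ref{table:dodecahedralmanifoldswithorderable} as an independent sanity check (this also gives extra supporting evidence for Conjecture \ref{conj:Lspaceconj} on these manifolds).

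The main obstacle I anticipate is purely computational: Dunfield's search is heuristic, and a negative answer is not conclusive, so the danger is that for some of the 23 manifolds the algorithm simply fails to find a foliar orientation even though one exists. If this happens, one would have to try many different triangulations (varying the number of tetrahedra, applying Pachner moves, or using different spun/closed triangulations derived from the dodecahedral tessellation) until the search succeeds. A secondary concern is that foliar orientations need not always exist for manifolds carrying co-orientable taut foliations, so in principle one could be forced to fall back on a different certificate of non-$L$-spaceness; however, since the manifolds in question have fairly small complexity (at most four dodecahedra, volume at most $17.225$), I expect the foliar orientation search to succeed in every case, as it does in the much larger census $\mathscr{Y}$ analyzed in \cite{D}. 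Once all 23 cases are handled, the lemma follows directly, and combined with Proposition \ref{prop:areLspaces} it completes the proof of Theorem \ref{thm:classdodman}.
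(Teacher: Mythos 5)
Your proposal matches the paper's argument exactly: the authors also run Dunfield's foliar-orientation search from \cite[Section 7]{D} on each of the remaining 23 manifolds to produce co-orientable taut foliations, and then conclude via Theorem \ref{CTF implica NLS} that none of them is an $L$-space (with the left-orderability data in Table \ref{table:dodecahedralmanifoldswithorderable} serving, as you suggest, only as a consistency check with Conjecture \ref{conj:Lspaceconj}). No gap; this is the same computer-assisted route the paper takes.
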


This lemma, together with Proposition \ref{prop:areLspaces}, implies the following:

\begin{namedtheorem}[\ref{thm:classdodman}]
Of the 29 manifolds in $\mathscr{D}$, 6 are $L$-spaces and 23 are not; see Tables \ref{table:dodecahedralmanifolds} - \ref{table:dodecahedralmanifoldswithorderable}.
\end{namedtheorem}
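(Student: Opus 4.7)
The plan is to prove Theorem \ref{thm:classdodman} by establishing the two complementary statements separately: first, that the six manifolds with indices $0, 2, 8, 11, 15, 28$ are $L$-spaces, and second, that the remaining $23$ manifolds in $\mathscr{D}$ are not. In both directions the work is computer-assisted but conceptually straightforward once the right machinery is in place.

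For the positive direction I would run the bootstrap algorithm described in Section \ref{section:proving_L_space} on each of the six target manifolds. Given $M \in \mathscr{D}$, the strategy is: drill the shortest geodesic of $M$ (as identified by SnapPy) to produce a rational homology solid torus $Y$, and check that $Y$ is Turaev simple; then compute $\iota^{-1}(D_{>0}^\tau(Y))$ with Dunfield's script and extract the Rasmussen--Rasmussen interval $\mathcal{I}\subset Sl(Y)=\Q\cup\{\infty\}$ containing the slope $\sfrac{1}{0}$ which refills $M$. By Theorem \ref{theorem RR}, if one can exhibit two $L$-space fillings of $Y$ lying in $\mathcal{I}$, then every filling in $\mathcal{I}$ is an $L$-space, and in particular $M$ is. To produce those two witnesses I would pick the two slopes in $\mathcal{I}$ whose associated fillings have smallest hyperbolic volume and recurse, terminating as soon as the recursion reaches manifolds of the Dunfield census $\mathscr{Y}$, whose $L$-space status is known by Theorem \ref{theorem Dunfield}.

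For the negative direction I would invoke Theorem \ref{CTF implica NLS}: it suffices to exhibit a co-orientable taut foliation on each of the $23$ manifolds. I would use Dunfield's foliar orientation algorithm from \cite[Section 7]{D}, which searches for foliar orientations on a triangulation of the given \qhs{} and, when successful, certifies the existence of a co-orientable taut foliation. Running this on the $23$ non-$L$-space candidates and recording a foliation for each completes this half of the proof.

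The main obstacle is the positive direction. The manifolds in $\mathscr{D}$ have volumes $\approx 8.612$ or $\approx 17.225$, and the Dunfield census covers only manifolds of volume below $\sim 9.14$, so the recursion must genuinely descend in volume through a chain of drillings and fillings before hitting $\mathscr{Y}$; a priori there is no guarantee that the minimal-volume fillings $M_1, M_2$ inside $\mathcal{I}$ have smaller volume than $M$, nor that the drilled $Y$ is actually Floer simple (we only check Turaev simplicity, which is necessary but not sufficient), nor that $\mathcal{I}$ contains any filling landing in the census. In practice the algorithm terminates successfully on all six candidates, so the remaining task is the rigorous computer-assisted verification, for which one should take care to prevent infinite loops (for instance by discarding recursive calls that recompute the volume of the starting $M$) and to handle the special case in which $\sfrac{1}{0}\in \iota^{-1}(D_{>0}^\tau(Y))$ by checking both adjacent intervals.
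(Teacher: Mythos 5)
Your proposal is correct and follows essentially the same route as the paper: the six $L$-spaces are certified by the Rasmussen--Rasmussen bootstrap algorithm of Section \ref{section:proving_L_space} terminating in the Dunfield census, and the remaining 23 manifolds are excluded via co-orientable taut foliations found by Dunfield's foliar-orientation search together with Theorem \ref{CTF implica NLS}. The caveats you flag (Turaev simple vs.\ Floer simple, non-guaranteed termination, the double-interval case) are exactly the ones the paper records.
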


In \cite{D}, Dunfield provides several algorithms to check the left-orderability and the non-left-orderability of the fundamental group of a \qhs{}. We apply these algorithms, and the details can be found in Table \ref{table:dodecahedralmanifoldswithorderable}. In particular, all the results are consistent with Conjecture \ref{conj:Lspaceconj}.

\begin{table}
\begin{center}
\begin{tabular}{c || c | c | c | c | c}
Index & N. dod. & $L$-space & Co-or.\ taut foliation & Left-orderable $\pi_1$ \\
\hline
0 & 2 & Yes$^1$ & No$^2$ & No$^5$ \\
1 & 2 & No$^2$ & Yes$^3$ & ? \\
2 & 2 & Yes$^1$ & No$^2$ & No$^5$ \\
3 & 2 & No$^2$ & Yes$^3$ & Yes$^4$ \\
4 & 2 & No$^2$ & Yes$^3$ & Yes$^4$ \\
5 & 4 & No$^2$ & Yes$^3$ & ? \\
6 & 4 & No$^2$ & Yes$^3$ & ? \\
7 & 4 & No$^2$ & Yes$^3$ & Yes$^4$ \\
8 & 4 & Yes$^1$ & No$^2$ & No$^5$ \\
9 & 4 & No$^2$ & Yes$^3$ & Yes$^4$ \\
10 & 4 & No$^2$ & Yes$^3$ & ? \\
11 & 4 & Yes$^1$ & No$^2$ & No$^5$ \\
12 & 4 & No$^2$ & Yes$^3$ & Yes$^4$ \\
13 & 4 & No$^2$ & Yes$^3$ & ? \\
14 & 4 & No$^2$ & Yes$^3$ & Yes$^4$ \\
15 & 4 & Yes$^1$ & No$^2$ & No$^5$ \\
16 & 4 & No$^2$ & Yes$^3$ & ? \\
17 & 4 & No$^2$ & Yes$^3$ & Yes$^4$ \\
18 & 4 & No$^2$ & Yes$^3$ & Yes$^4$ \\
19 & 4 & No$^2$ & Yes$^3$ & Yes$^4$ \\
20 & 4 & No$^2$ & Yes$^3$ & ? \\
21 & 4 & No$^2$ & Yes$^3$ & Yes$^4$ \\
22 & 4 & No$^2$ & Yes$^3$ & ? \\
23 & 4 & No$^2$ & Yes$^3$ & Yes$^4$ \\
24 & 4 & No$^2$ & Yes$^3$ & Yes$^4$ \\
25 & 4 & No$^2$ & Yes$^3$ & Yes$^4$ \\
26 & 4 & No$^2$ & Yes$^3$ & Yes$^4$ \\
27 & 4 & No$^2$ & Yes$^3$ & Yes$^4$ \\
28 & 4 & Yes$^1$ & No$^2$ & No$^5$ \\

\end{tabular}
\vspace{.2 cm}
\nota{The manifolds in $\mathscr{D}$: the hyperbolic 3-manifolds tessellated with four or less right-angled dodecahedra that are rational homology spheres. The indexing is the one provided by SnapPy, where $\mathscr{D}$ can be accessed by typing \texttt{CubicalOrientableClosedCensus(betti=0)}. The apices indicate in which way we obtain the result. In particular: 1)~algorithm in Section~\ref{section:proving_L_space}; 2)~Theorem \ref{CTF implica NLS} by \cite{OS, Bow, KR}; 3)~algorithm in \cite{D}, see \cite[Section 7]{D}; 4)~algorithm in \cite{D}, see \cite[Section 9]{D}; 5)~algorithm in \cite{D}, see \cite[Section 5]{D}. Compare with Table \ref{table:dodecahedralmanifolds}.} 
\label{table:dodecahedralmanifoldswithorderable}
\end{center}
\end{table}

\section{Building up the 4-manifold}\label{sec:building_4_man}

We now show how the existence of a dodecahedral manifold that is an $L$-space can be used to prove Theorem \ref{cor:120cellmanifold}. To do this, we recall some concepts behind the construction in \cite{M}.

\subsection{Embedding dodecahedral manifolds in 4-manifolds with corners}\label{subsection:embedding dodecahedral}

Let $\mathbb{D}^n$ be the disc model for the hyperbolic space and $\mathcal{O}^n=\{x\in \mathbb{D}^n\mid x_i\ge0 \}$ the positive orthant. A \textit{hyperbolic $n$-manifold with (right-angled) corners} $W$ is a topological $n$-manifold equipped with an atlas taking values in open subsets of $\mathcal{O}^n$ and transition maps that are restrictions of isometries. One can visualize this as the natural extension of a manifold with geodesic boundary, where the atlas takes values in open subsets of $\{x\in \mathbb{D}^n\mid x_1\ge0 \}$.

\begin{figure}[H]
    \centering
    \includegraphics[scale=0.3]{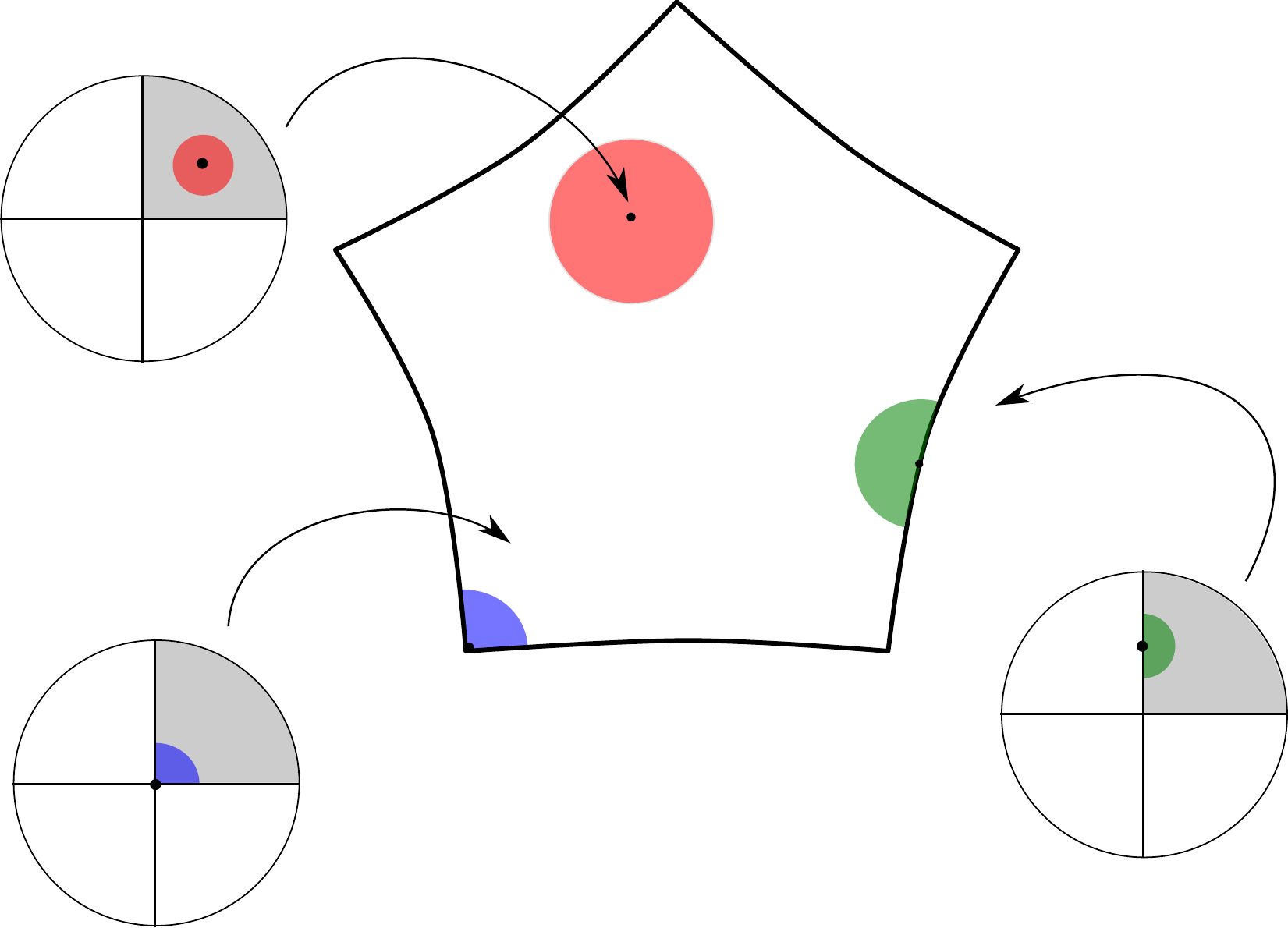}
    \nota{An example of charts for a 2-manifold with right-angled corners.}
    \label{im:orbifold example}
\end{figure}

The boundary $\partial W$ is the set of points in $W$ that do not admit a neighborhood homeomorphic to $\mathbb{R}^n$; it is stratified into vertices, edges, ..., $s$-faces,..., and facets. Each $s$-face is an $s$-manifold with corners, and distinct $s$-faces meet at right-angles. A facet is called \textit{isolated} if it does not meet any other facet, and as such it must be a geodesic boundary component of $W$. 

Under certain hypotheses, gluing manifolds with corners along (possibly more than one) pair of isometric facets yields another manifold with corners, as the following example shows.

\begin{example}\label{ex:corner-angle manifold example}
\normalfont
Consider the surface $S$ in Figure~\ref{im:corner-angled}-right, obtained by gluing two copies of the right-angled pentagon on the left along the coloured edges. This is a hyperbolic manifold with corners, with edges $F_1$, $F_2$ and $F_3$ and vertices $p$ and $q$. We have $F_1\cap F_2 =\{p,q\}$, while the facet $F_3\cong S^1$ is isolated.

\begin{figure}[H]
\centering
  \centering
  \includegraphics[scale=0.5]{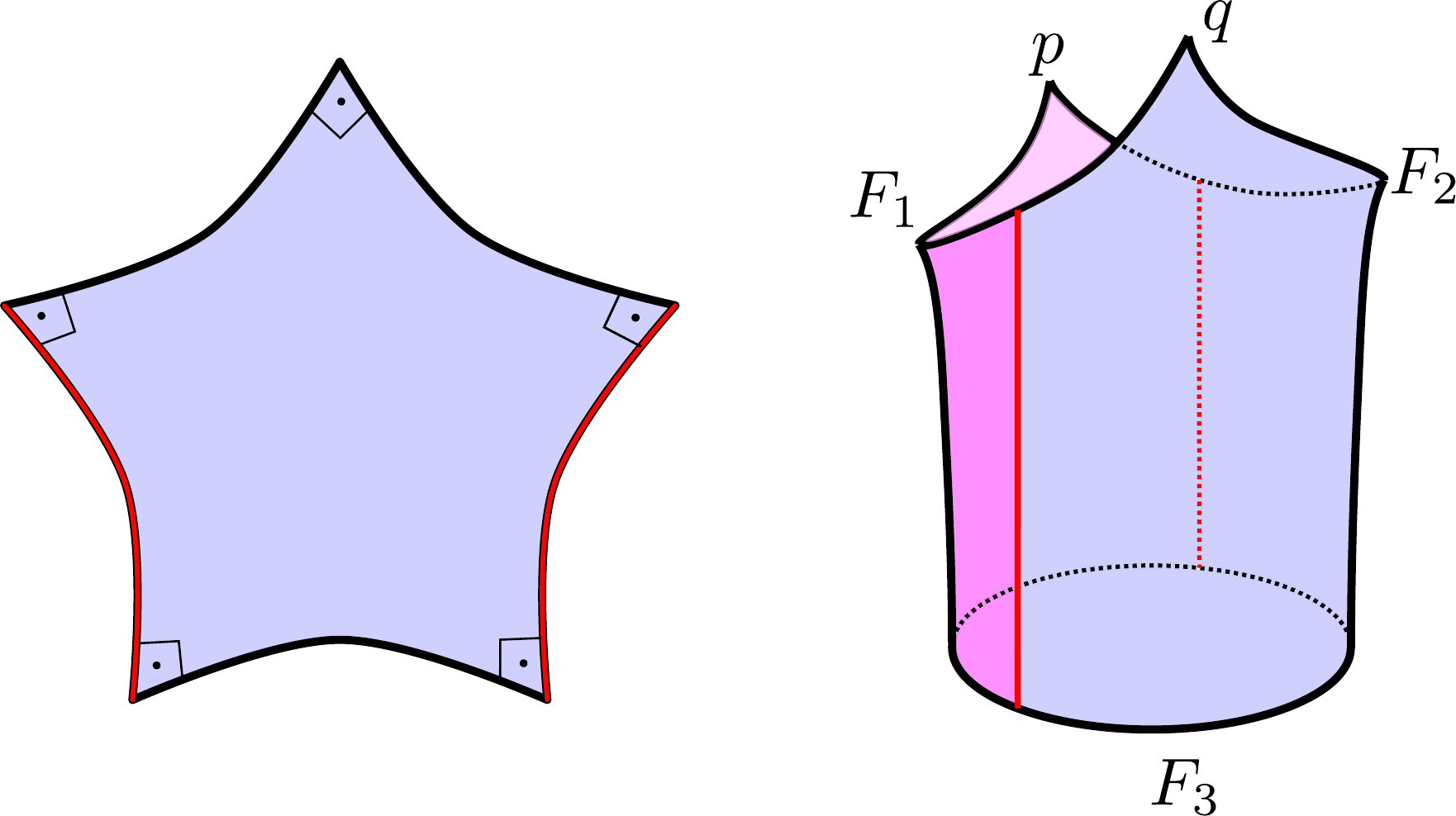}
\nota{An example of a right-angled pentagon and the manifold with corners obtained by mirroring it along the red edges.}
\label{im:corner-angled}
\end{figure}
\end{example}

More generally, similarly to right-angled polytopes \cite[Section 2]{KMT}, a manifold with corners can be coloured along its facets. More precisely, let $\mathcal{F}_W$ be the set of facets of a manifold with corners $W$. 
A \textit{$k$-colouring} of $W$ is a surjective map $\lambda:\mathcal{F}_W\to \{1,...,k\}$ that associates distinct numbers (called \textit{colours}) to adjacent facets. Let $e_1, \ldots, e_k$ be the canonical basis of $\mathbb{Z}_2 ^k$. We can define the topological space $\mathcal{M}_\lambda=(W\times \mathbb{Z}_2 ^k)/_\sim$, where distinct $W\times \{u\}$, $W\times \{v\}$ are glued along the identity on a facet $F\in \mathcal{F}_W$ if $u-v=e_{\lambda(F)}$.

\begin{prop}[\cite{M}, Proposition 6]\label{prop:corner-angle colouring}
The resulting $\mathcal{M}_\lambda$ is a connected, orientable, hyperbolic $n$-manifold tessellated into $2^k$ copies of $W$. If $W$ is compact $\mathcal{M}_\lambda$ is closed. 
\end{prop}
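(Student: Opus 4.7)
The plan is to verify separately the four assertions bundled in the statement: that $\mathcal{M}_\lambda$ is a hyperbolic $n$-manifold without boundary, that it is connected, that it is orientable, that it decomposes into $2^k$ isometric copies of $W$, and finally that it is compact when $W$ is. The backbone of the argument is a local model at each point: at a codimension-$s$ stratum of $W$ exactly $s$ pairwise-adjacent facets meet at right angles, and the colouring rule forces their colours to be pairwise distinct. This distinctness is what will make the $2^s$ sheets clustering at such a point glue together to form a single hyperbolic ball.

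The first step is the local hyperbolic manifold structure. I would pick a point $x$ lying on exactly $s$ facets $F_1,\dots,F_s$ of $W$ and a chart of $W$ near $x$ taking values in the intersection of a hyperbolic ball $B\subset\mathbb{H}^n$ with $s$ mutually orthogonal closed half-spaces $H_1^+,\dots,H_s^+$. Let $c_j=\lambda(F_j)$; by hypothesis $c_1,\dots,c_s$ are pairwise distinct, so $e_{c_1},\dots,e_{c_s}$ are linearly independent in $\mathbb{Z}_2^k$. In $\mathcal{M}_\lambda$ the sheets $W\times\{u+\sum_{j\in T}e_{c_j}\}$ for $T\subseteq\{1,\dots,s\}$ give exactly $2^s$ distinct charts meeting along $x$, and the commuting reflections $r_1,\dots,r_s$ across $\partial H_1^+,\dots,\partial H_s^+$ let me transport the chart of each sheet to a different orthant of $B$. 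The $2^s$ reflected copies tile $B$, producing a bona fide hyperbolic chart around $[x,u]$ valued in an open subset of $\mathbb{H}^n$. Since transition maps between any two sheets on overlapping facets are identity maps in the $W$-coordinate, they are restrictions of isometries, so the resulting atlas is hyperbolic.

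For connectedness, the surjectivity of $\lambda$ means that $\{e_{\lambda(F)}\mid F\in\mathcal{F}_W\}$ generates $\mathbb{Z}_2^k$; writing any $v\in\mathbb{Z}_2^k$ as a sum of such generators and traversing the corresponding facet-gluings in $W$ yields a path from $W\times\{0\}$ to $W\times\{v\}$, and $W$ itself is connected. For orientability, fix an orientation on $W$ and equip each sheet $W\times\{u\}$ with that orientation multiplied by $(-1)^{|u|}$. Every facet-gluing is a reflection across a geodesic hyperplane, hence orientation-reversing, and the parity flip between $u$ and $u+e_{\lambda(F)}$ compensates, so the induced orientations agree across each facet identification. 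The tessellation claim is immediate: the open interiors of the $2^k$ sheets are pairwise disjoint open sets whose closures cover $\mathcal{M}_\lambda$ and each is isometric to the interior of $W$. If $W$ is compact then $\mathcal{M}_\lambda$, being a quotient of the compact space $W\times\mathbb{Z}_2^k$, is compact; combined with having no boundary this yields closedness.

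The main obstacle I expect is the verification at high-codimension corners, because a priori the chain of single-facet identifications could create unintended gluings of a sheet to itself or force two different sheets to be forced to coincide. The colouring hypothesis is exactly what rules this out: along a codimension-$s$ stratum the elements $e_{c_1},\dots,e_{c_s}$ are independent, so the $2^s$ formal translates $u+\sum_{j\in T}e_{c_j}$ are genuinely distinct, and the gluing pattern matches the action of the finite Coxeter group $\mathbb{Z}_2^s$ generated by the reflections $r_1,\dots,r_s$. Without distinct colours the local picture would collapse and either fail to be a manifold or fail to be hyperbolic; with distinct colours the combinatorics and the geometry align, and the proposition follows.
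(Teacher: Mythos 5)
Your argument is correct, but note that the paper itself offers no proof of this statement: it is imported verbatim as \cite[Proposition~6]{M}, so there is nothing internal to compare against. What you have written is essentially the standard reflection-group argument that underlies Martelli's proof (and the analogous statement for coloured right-angled polytopes in \cite{KMT}): the only genuinely delicate point is the local model at a codimension-$s$ stratum, where pairwise adjacency of the $s$ incident facets forces pairwise distinct colours, hence linearly independent vectors $e_{c_1},\dots,e_{c_s}$ in $\mathbb{Z}_2^k$, so that the $2^s$ sheets meeting there are genuinely distinct and assemble, via the $s$ commuting orthogonal reflections, into a single hyperbolic ball. You identify this correctly, and your connectedness (surjectivity of $\lambda$ generates $\mathbb{Z}_2^k$), tessellation, and compactness steps are all fine. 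Two small points worth making explicit: first, your orientability argument via the sign $(-1)^{|u|}$ tacitly assumes $W$ itself is orientable (you ``fix an orientation on $W$''); this is indeed part of the hypotheses in \cite{M} and is satisfied by the manifolds $W$ produced in Proposition~\ref{prop:dodecahedral-embed}, but it should be stated, since the conclusion is false for non-orientable $W$. Second, the consistency of the orientation is not just a facet-by-facet check: it is global because $(-1)^{|u|}$ is a well-defined function of the sheet, so no monodromy can arise around lower-dimensional strata; you gesture at this in your last paragraph but it deserves one explicit sentence. With those additions your proof is complete and matches the intended argument.
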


\begin{example}\label{ex:corner-angle colouring}
\normalfont
Take the manifold with corners from Example \ref{ex:corner-angle manifold example} and its colouring $\lambda$ given by $\lambda(F_i)=i$. These colours are represented by red, blue and purple respectively in Figure \ref{im:colourings}.

An useful method to visualize the manifold $\mathcal{M}_\lambda$ is by iteratively mirroring along facets with the same colour. This procedure goes as follows:

\begin{itemize}
    \item we start with a manifold $M$ with corners and a $k$-colouring $\lambda$;
    \item we mirror $M$ along the facets $F$ such that $\lambda(F)=k$. We obtain a new manifold $M'$ with corners. The facets of $M'$ are of two types:
    \begin{itemize}
        \item the facets in $M$ that are adjacent to a facet with colour $k$ are mirrored along their intersection with these facets. Notice that, by using the combinatorics of the intersections of faces of $\mathcal{O}^n$, one can prove that the intersection of two facets is empty or a common sub-facet (see also \cite[Definition 2.2]{FKS} and the discussion thereafter). As an example, see what happens to the blue facet from Figure \ref{im:colourings}, $(a)$ to Figure \ref{im:colourings}, $(b)$;
        \item the facets in $M$ that do not intersect any facet with colour $k$ are doubled: each of them produces two isometric copies of itself. As an example, see what happens to the purple facet from Figure \ref{im:colourings}, $(a)$ to Figure \ref{im:colourings}, $(b)$;
    \end{itemize}
    \item we build a natural colouring $\lambda'$ on $M'$: each facet of $M'$ comes from a facet of $M$, by mirroring or doubling. We assign to each facet the colour of the corresponding facet in $M$. This is a $(k-1)$-colouring;
    \item if $k-1=0$, we have no facets lefts and we have obtained $\mathcal{M}_\lambda$; otherwise we start again this procedure with $M'$ and $\lambda'$.
\end{itemize}

By performing this on our example, after mirroring along the red and blue facets we get a compact manifold with $2^{3-1}=4$ copies of $F_3\cong S^1$ as boundary (see Figure \ref{im:colourings}, $(c)$). Then, by mirroring this manifold along its boundary, we get $\mathcal{M}_\lambda$ and the $4$ copies of $F_3$ as a separating submanifold.
\end{example}
\begin{figure}[H]
\centering
  \includegraphics[scale=0.17]{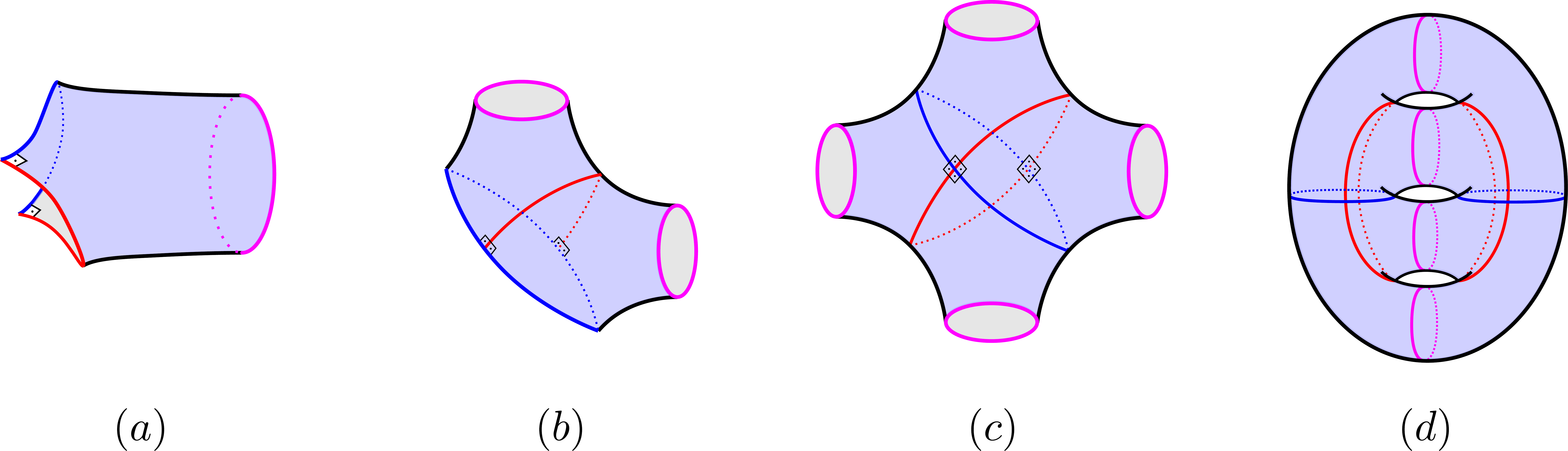}
\nota{From left to right, the four steps described in Example \ref{ex:corner-angle colouring}.}
\label{im:colourings}
\end{figure}

\begin{remark}
We point out that there can exist manifolds with corners that do not admit any colouring. In fact it can happen that some facet $F$ of a manifold with corners $W$ is not \emph{embedded}. In this case, such a facet is technically adjacent to itself and a colouring of $W$ should assign two different colours to $F$, which is impossible. This is an important issue we will take care of. See also Example \ref{example_non_embedded}.
\end{remark}

Concerning dodecahedral manifolds, \cite[Proposition 4 - Remark 5]{M} gives us the following:

\begin{prop}\label{prop:dodecahedral-embed}
Every dodecahedral manifold $M$ embeds geodesically in the interior of a connected, complete, compact, orientable hyperbolic $4$-manifold $W$ with corners. If $M$ is connected and tesselated into $k$ dodecahedra, $W$ is tesselated into $2k$ $120$-cells.
\end{prop}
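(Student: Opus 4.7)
The plan is to build $W$ by thickening the tessellation of $M$ into dimension 4. The central geometric fact is that the regular right-angled hyperbolic dodecahedron $D$ appears as a dodecahedral facet of the regular right-angled hyperbolic 120-cell $P \subset \bH^4$, and moreover the right-angled reflective structure of $D$ extends compatibly to that of $P$ at $D$: each of the 12 pentagonal faces of $D$ is shared with a unique adjacent dodecahedral facet of $P$ at dihedral angle $\pi/2$. So the local combinatorics around $D$ in $\bH^3$ is a 3-dimensional slice of the local combinatorics around $D$ in $\bH^4$.

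First I form a local thickening. For each dodecahedron $D_i$ in the tessellation of $M$, I realize $D_i$ as a dodecahedral facet of a 120-cell $P_i$ and set $W_i := P_i \cup_{D_i} P_i^*$, where $P_i^*$ is the reflection of $P_i$ across the hyperplane containing $D_i$. The piece $W_i$ is a hyperbolic 4-manifold with right-angled corners, tessellated into two 120-cells, containing $D_i$ in its interior as a totally geodesic 3-dimensional submanifold. Each of the 12 pentagons on $\partial D_i$ lies in the interior of a facet of $W_i$ of the form $E \cup_F E^*$, formed by a cell $E$ of $P_i$ adjacent to $D_i$ along that pentagon $F$ together with its reflection $E^*$ in $P_i^*$; these match smoothly across $F$ because both $E$ and $E^*$ are perpendicular to the hyperplane of $D_i$, so together they form a totally geodesic ``doubled dodecahedron'' facet.

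Then I glue the pieces $\{W_i\}$ according to the gluings of $\{D_i\}$ in $M$. Whenever $D_i, D_j$ are glued along a pentagonal face $F$ by an isometry $\phi$, I extend $\phi$ to an isometry of the doubled-dodecahedron facets of $W_i$ and $W_j$ that contain $F$: this extension is determined uniquely, by rigidity of hyperbolic isometries, once one fixes a consistent identification of the ``two sides'' (i.e.\ $P_i \leftrightarrow P_j$ and $P_i^* \leftrightarrow P_j^*$). Performing these gluings for all pairs of adjacent dodecahedra of $M$ produces a hyperbolic 4-manifold with corners $W$ tessellated into $2k$ copies of the 120-cell, and containing $M = \bigcup_i D_i$ as a two-sided, totally geodesic separating submanifold in its interior.

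The main obstacle is verifying global consistency of these gluings around the lower-dimensional strata of the tessellation, where 4 dodecahedra meet along each edge of $M$ and several dodecahedra meet at each vertex. The right-angled structure of the 120-cell is essential here: the local model in $\bH^4$ at each stratum is canonically determined by the local model of the tessellation of $M$ at that stratum via orthogonal extension, so the $\{W_i\}$ fit together consistently at all strata. Once $W$ is assembled, connectedness follows from connectedness of $M$, compactness and completeness follow from the compactness of the 120-cell, and orientability can be arranged by choosing the extensions of the gluing isometries so that they consistently match the $P_i$-sides with the $P_j$-sides (and the $P_i^*$-sides with the $P_j^*$-sides) of adjacent pieces.
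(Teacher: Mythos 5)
Your proposal is correct and follows essentially the same construction that the paper uses (which it takes from \cite[Proposition 4 -- Remark 5]{M} and sketches in Remark \ref{rem:construction}): one right-angled 120-cell per dodecahedron, with each pentagon gluing extended canonically to the adjacent dodecahedral facets, and the consistency around lower strata and orientability inherited from the tessellation of $M$. The only difference is the order of assembly --- the paper first builds $W'$ from $k$ 120-cells with $M$ as an isolated boundary facet and then mirrors $W'$ along $M$, whereas you double each 120-cell across its dodecahedral facet first and then glue the doubled pieces; the resulting $W$ is the same.
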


\begin{remark}\label{rem:construction}
Notice that the 120-cell is a compact hyperbolic 4-dimensional polytope that has 120 3-facets that are right-angled dodecahedra. For more information on this, see \cite[Section 1]{MartelliSurvey}.
The idea of the construction in \cite{M} is to take one 120-cell $\mathcal{H}_i$ for each right-angled dodecahedron $\mathcal{D}_i$ of the decomposition of $M$, then consider each $\mathcal{D}_i$ as one facet of $\mathcal{H}_i$, and then extend each gluing between two faces $F\in \mathcal{D}_i$ and $G \in \mathcal{D}_j$ to a gluing of the dodecahedral facets $\mathcal{A} \in \mathcal{H}_i$ and $\mathcal{B} \in \mathcal{H}_j$  such that $\mathcal{D}_i \cap \mathcal{A}=F$ and $\mathcal{D}_j \cap \mathcal{B}=G$.
This operation produces a manifold with corners $W'$ in which $M$ is an isolated facet. The manifold $W$ is obtained by considering the mirror of $W'$ along $M$. This can be visualized by mirroring the manifold with corners in Figure \ref{im:corner-angled} along $F_3$. We denote these two copies of $W'$ inside $W$ as $W'_+$ and $W'_-$. As a consequence of this construction, we have that 
$$
\mathcal{F}_{W}=(\mathcal{F}_{W'_+}\setminus{M} )\sqcup (\mathcal{F}_{W'_-}\setminus{M}),
$$
and a facet in $\mathcal{F}_{W'_+}\setminus{M}$ is never adjacent to a facet in $\mathcal{F}_{W'_-}\setminus{M}$.
This decomposition induces a natural involution \[s \colon \mathcal{F}_{W} \to \mathcal{F}_{W}, \] that sends each facet $F$ of $W$ to the corresponding facet in the other copy of $W'$ in $W$. 
\end{remark}

We now describe a situation where a construction analogous to the one contained in the proof of Proposition \ref{prop:dodecahedral-embed} yields a manifold with corners with some non embedded facets. For the cases of our interests we tackle this issue in Proposition \ref{prop:tackle}.

\begin{example}\label{example_non_embedded}
\normalfont Recall that the regular right-angled hyperbolic hexagon is one facet of the Löbell polyhedron $R(6)$. This is a 3-dimensional right-angled hyperbolic polyhedron with $14$ faces: $2$ hexagonal faces and $12$ pentagonal faces arranged in the same pattern as the lateral surface of a dodecahedron. Löbell polyhedra were defined in \cite{V1}, see also \cite{V2} and see Figure \ref{R6} for a picture of $R(6)$.

\begin{figure}[H]

  \centering
  \includegraphics[scale=0.7]{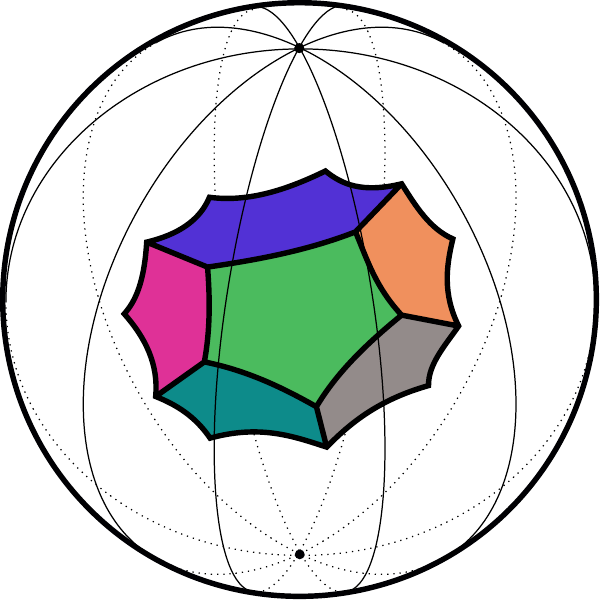}
\nota{The Löbell polyhedron $R(6)$ in the disc model of the hyperbolic space. In blue, one hexagonal facet. Opposite to it, there is another hexagonal facet, that is not visible. All the other facets are pentagons.}
\label{R6}
\end{figure}

Therefore if a closed surface $S$ is tessellated by regular right-angled hyperbolic hexagons, by performing the same construction of Proposition \ref{prop:dodecahedral-embed} it is possible to embed geodesically $S$ in the interior of a hyperbolic $3$-manifold with corners tessellated by copies of $R(6)$.
We now consider a regular right-angled hyperbolic hexagon $E$ and glue two of its edges as depicted in Figure \ref{hexagon-R(6)} $a)$. In this way we obtain a 2-dimensional manifold with corners and by colouring its three facets we obtain a closed hyperbolic surface $S$ tessellated by hexagons. When extending the gluings among the hexagons to the faces of the $R(6)$ polyhedra we have that two adjacent pentagonal faces (the coloured ones in Figure \ref{hexagon-R(6)} $b)$) of the polyhedron placed above $E$ are glued along one edge and become a non-embedded facet of the final 3-manifold with corners in which $S$ embeds.
\end{example}

\begin{figure}[H]

  \centering
  \includegraphics[scale=0.5]{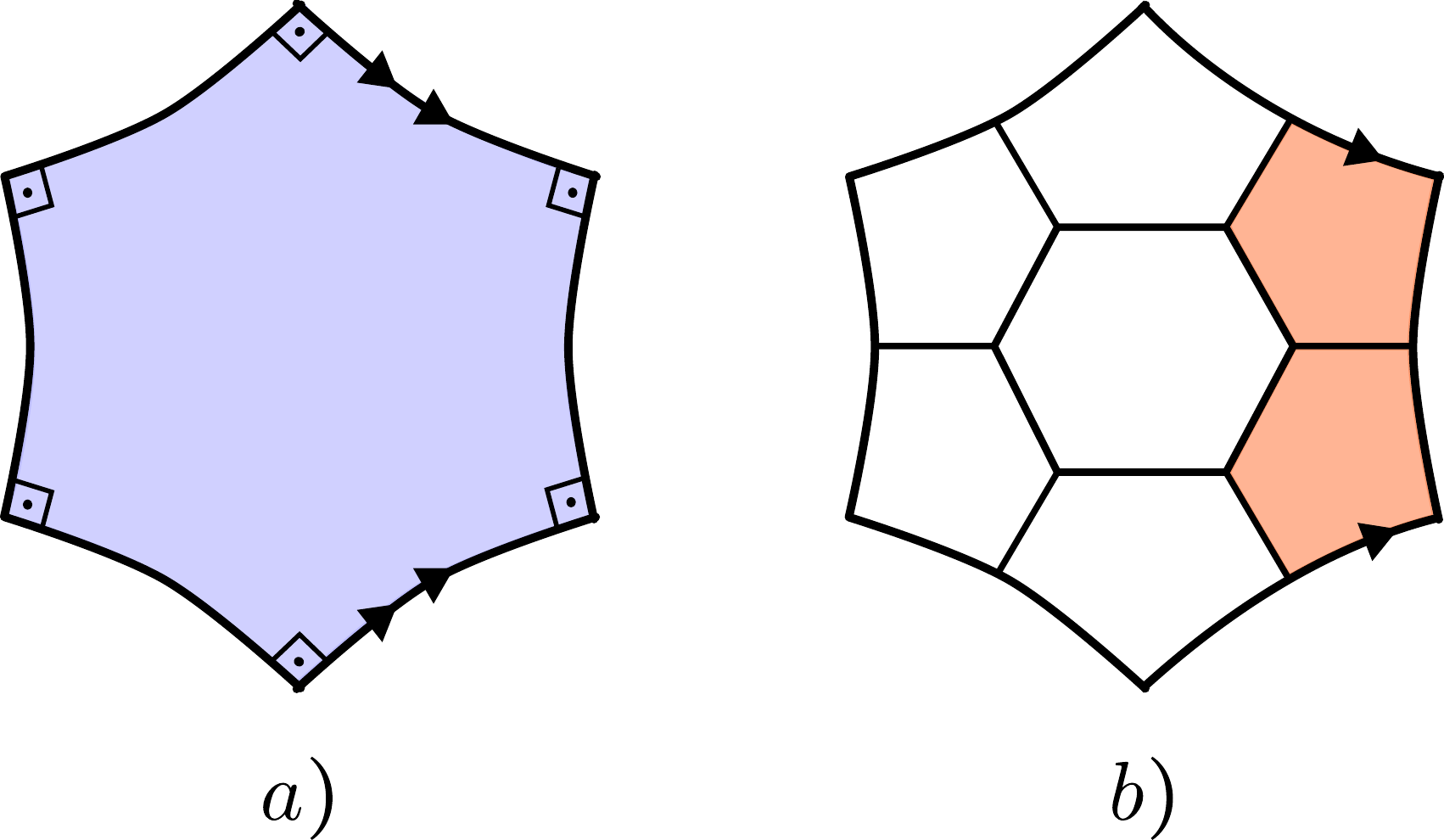}
\nota{On the left, the gluing that give raise to the non-embedded facet in Example \ref{example_non_embedded}. On the right, the facets of $R(6)$ not adjacent to the one that is identified with the hexagon; the two pentagons that become the same facet after the gluing are coloured.}
\label{hexagon-R(6)}
\end{figure}

We now introduce a special, yet very natural, type of colourings that will be useful for our constructions. Recall from Remark \ref{rem:construction} that there is an involution $s$ on the set of facets of the manifold $W$ provided by Proposition \ref{prop:dodecahedral-embed}.

\begin{definition}\label{defn:symm_col}
A colouring $\lambda$ of $W$ is \emph{symmetric} if $\lambda \circ s=\lambda$ (see Figure \ref{im:symm_col}).
\end{definition}

In other words, a colouring of $W$ is symmetric if and only if corresponding facets in the two copies of $W'$ in $W$ have the same colour. 
Asking for a symmetric colouring is not a restrictive request; in fact we have the following:

\begin{lemma}\label{lem:symm_colouring}
If the manifold with corners $W$ admits a $k$-colouring then it also admits a symmetric $h$-colouring with $h\leq k$.
\end{lemma}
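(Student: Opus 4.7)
The plan is to exhibit the desired symmetric colouring by copying the colouring on one side of the mirror onto the other side via the involution $s$, and then verifying this yields a legitimate colouring.

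First I would recall the structure from Remark~\ref{rem:construction}: the manifold with corners $W$ is the mirror double of $W'$ along the (non-facet) geodesic hypersurface $M$, so there is a canonical involutive isometry $\iota\colon W\to W$ fixing $M$ pointwise and swapping $W'_+$ with $W'_-$. This isometry induces the involution $s$ on $\mathcal{F}_W$, and because $\iota$ is an isometry it sends codimension-$2$ strata to codimension-$2$ strata; hence \emph{$s$ preserves the adjacency relation on $\mathcal{F}_W$}. Combined with the fact (also recalled in Remark~\ref{rem:construction}) that no facet of $W'_+\setminus\{M\}$ is adjacent to any facet of $W'_-\setminus\{M\}$, adjacency of facets is entirely determined by what happens on a single side.

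Next, given a $k$-colouring $\lambda\colon\mathcal{F}_W\to\{1,\ldots,k\}$, I would define a new map $\mu\colon\mathcal{F}_W\to\{1,\ldots,k\}$ by
\[
\mu(F)=\begin{cases}\lambda(F),& F\in\mathcal{F}_{W'_+}\setminus\{M\},\\ \lambda(s(F)),& F\in\mathcal{F}_{W'_-}\setminus\{M\}.\end{cases}
\]
By construction $\mu\circ s=\mu$, so $\mu$ is symmetric. To check it is a colouring, pick two adjacent facets $F,G$. They must lie on the same side. If both lie in $W'_+$, then $\mu(F)=\lambda(F)\neq\lambda(G)=\mu(G)$ because $\lambda$ was a colouring. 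If both lie in $W'_-$, then $s(F)$ and $s(G)$ are adjacent in $W$ (since $s$ preserves adjacency), both lie in $W'_+$, and hence $\mu(F)=\lambda(s(F))\neq\lambda(s(G))=\mu(G)$, again because $\lambda$ was a colouring. Thus $\mu$ assigns distinct colours to adjacent facets.

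Finally, $\mu$ takes values in $\{1,\ldots,k\}$, but may fail to be surjective; letting $h\le k$ be the size of its image and relabelling yields the required symmetric $h$-colouring. The only real subtlety is the adjacency-preservation of $s$, which is why I would spend most of the argument on recalling the mirror construction and pointing out that $s$ is induced by a global isometry of $W$ (rather than a purely combinatorial gadget).
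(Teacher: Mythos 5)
Your proof is correct and follows essentially the same route as the paper's: define the new colouring by copying $\lambda$ from $W'_+$ to $W'_-$ via $s$, check adjacency using the fact that adjacent facets lie on the same side and that $s$ preserves adjacency, and relabel to restore surjectivity. The only difference is that you justify the adjacency-preservation of $s$ explicitly (via the mirror isometry), which the paper simply asserts.
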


\begin{proof}
Suppose that we have a $k$-colouring $\lambda \colon \mathcal{F}_W \to \{1, \ldots, k\}$. We can define a new colouring $\bar{\lambda}$ such that:
\begin{itemize}
    \item on a facet $F \in \mathcal{F}_{W'_+}\setminus{M}$ it takes value $\lambda(F)$;
    \item on a facet $F \in \mathcal{F}_{W'_-}\setminus{M}$ it takes value $\lambda \circ s(F)$.
\end{itemize}
We now show that the map $\bar{\lambda}$ assigns to adjacent facets different colours. Let $F,G$ be two adjacent facets in $\mathcal{F}_W$. From Remark \ref{rem:construction}, they both belong to $\mathcal{F}_{W'_+}\setminus{M}$ or to $\mathcal{F}_{W'_-}\setminus{M}$. In the first case, $\bar{\lambda}$ takes the same value as $\lambda$, hence they have different image. In the second case, $F$ and $G$ are adjacent if and only if $s(F)$ and $s(G)$ are, hence \[\bar{\lambda}(F) = \lambda \circ s(F) \neq \lambda \circ s(G) = \bar{\lambda}(G).\]

The new colouring $\bar{\lambda}$ could be not surjective onto $\{1, \ldots, k\}$. In this case, we fix a bijection $b$ from the image of $\bar{\lambda}$ and $ \{1, \ldots, h\}$ and consider $b \circ \bar{\lambda}$.
\end{proof}

\begin{remark}
Note that the manifold $\mathcal{M}_\lambda$ obtained from a symmetric $k$-colouring of $W$ is isometric to the manifold $\mathcal{M}_\mu$ obtained by the colouring $\mu$ on $W'$ such that $\mu(F)=\lambda(F)$ if $F\neq M$ (where $F$ is seen both as a facet of $W'$ and as $W'_+ \cong W'$) and $\mu(M)=k+1$. Indeed, if we mirror $W'$ along $M$ (which was given a different colour than the rest of $W'$) and colour its copy with the same colours on corresponding facets, we get back the symmetric colouring $\lambda$ on $W$.
\end{remark}

Let $M$ be a dodecahedral manifold, $W$ the manifold with corners provided by Proposition \ref{prop:dodecahedral-embed} and $N$ the manifold obtained by a symmetric $k$-colouring of $W$. We now describe how the copies of $M$ are located in $N$. 

\begin{lemma}\label{lemma:dodecahedral-separation}
The disconnected manifold $M\times \mathbb{Z}_2 ^k$ embeds totally geodesically in $N$ as a separating submanifold. More precisely, the image of $M\times \mathbb{Z}_2 ^k$ separates $N$ in two isometric connected components $N_+$ and $N_-$. 
\end{lemma}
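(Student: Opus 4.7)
The plan is to exploit the Remark following Lemma~\ref{lem:symm_colouring} to rewrite $N$ in terms of $W'$. Extending $\lambda$ to a $(k+1)$-colouring $\mu$ of $W'$ by setting $\mu(F)=\lambda(F)$ for $F\neq M$ and $\mu(M)=k+1$, the Remark provides an isometry $N \cong (W' \times \mathbb{Z}_2^{k+1})/{\sim}$, in which $W' \times \{u\}$ is glued to $W' \times \{v\}$ along a facet $F$ precisely when $u-v=e_{\mu(F)}$. The key observation is that, since $M$ is the unique facet of colour $k+1$, the gluings involving a copy of $M$ are exactly those that change the last coordinate of the $\mathbb{Z}_2^{k+1}$ index.

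Next I would split $N$ along this coordinate. For $\epsilon \in \{0,1\}$, let $N_\epsilon$ be the image in $N$ of $\bigsqcup_{u_{k+1}=\epsilon} W'\times \{u\}$. All internal identifications of $N_\epsilon$ use facets of colour at most $k$, so $N_+$ and $N_-$ meet exactly along the $2^k$ copies of $M$ coming from the identifications of $W'\times \{(u_1,\ldots,u_k,0)\}$ with $W'\times \{(u_1,\ldots,u_k,1)\}$ as $(u_1,\ldots,u_k)$ ranges over $\mathbb{Z}_2^k$. These copies sit in pairwise disjoint pairs of $W'$-cells and are therefore embedded and pairwise disjoint; each is totally geodesic because $M$ is a totally geodesic facet of $W'$ by Proposition~\ref{prop:dodecahedral-embed}. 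Their union is diffeomorphic to $M \times \mathbb{Z}_2^k$ and forms the desired separating submanifold.

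For the isometry $N_+ \cong N_-$, I would consider the translation $\sigma\colon(x,u) \mapsto (x, u+e_{k+1})$ on $W' \times \mathbb{Z}_2^{k+1}$. A direct check shows $\sigma$ preserves the relation $u-v=e_{\mu(F)}$ for every facet $F$, hence descends to an isometric involution of $N$ that swaps $N_+$ and $N_-$.

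The point requiring the most care is the connectedness of $N_\pm$. Since $\lambda$ is symmetric and surjective onto $\{1,\ldots,k\}$, the restriction of $\mu$ to $\mathcal{F}_{W'}\setminus\{M\}$ is a surjective $k$-colouring. The subset $N_+$ is then obtained by the same coloured-mirror procedure as in Proposition~\ref{prop:corner-angle colouring}, except that the facet $M$ is left as boundary rather than mirrored; the standard path-connectivity argument of that proposition, linking any two $W'$-cells by successively crossing facets of the required colours, shows that $N_+$ (and hence $N_-$) is connected, completing the proof.
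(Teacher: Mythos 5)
Your proposal is correct and follows essentially the same route as the paper: both proofs exploit the symmetric colouring to split $N$ into two coloured copies of $W'$ meeting along $2^k$ disjoint copies of the isolated facet $M$, you merely re-index via the Remark's $(k+1)$-colouring $\mu$ of $W'$ instead of working in $W\times\mathbb{Z}_2^k$ and pushing forward along $\pi$ as the paper does. Your explicit translation by $e_{k+1}$ is a slightly cleaner way to exhibit the isometry $N_+\cong N_-$ than the paper's observation that the induced colourings on the two copies of $W'\setminus M$ coincide, but the underlying mechanism is identical.
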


\begin{proof}
We have that $M\times \mathbb{Z}_2 ^k$ embeds totally geodesically in $W\times \mathbb{Z}_2 ^k$. We also have the quotient  $W\times \mathbb{Z}_2 ^k  \xrightarrow{\pi} (W\times \mathbb{Z}_2 ^k)/_\sim=N$, which identifies facets of $ W \times \mathbb{Z}_2 ^k$. Since $M\times \mathbb{Z}_2 ^k$ lies in $\mathrm{int}(W)\times \mathbb{Z}_2 ^k$, we have that it embeds geodesically in $N$ as well. From now on we will identify $M\times \mathbb{Z}_2 ^k$ with its image in $N$.

It follows by the construction of $W$ (see Remark \ref{rem:construction}) that $M$ separates $W$ in two isometric copies of $W'\setminus{M}$ and therefore $M\times \mathbb{Z}_2 ^k$ separates $W\times \mathbb{Z}_2 ^k$ in two isometric copies of $(W'\setminus{M})\times \mathbb{Z}_2 ^k$. Of course the image of these two copies via $\pi$ is exactly the complement of $M\times \mathbb{Z}_2 ^k$ inside $N$ and since they are saturated sets for the $\mathbb{Z}_2 ^k$ action their images are disjoint open sets in $W$. In order to conclude the proof we are then left to prove that their images are connected and isometric. This is a consequence of the colouring being symmetric. In fact each colouring of $W$ induces, by restriction, a colouring of each of the two copies of $W'\setminus{M}$. If the colouring is symmetric, then the induced colourings on the the two copies of $W'\setminus{M}$ coincide. By definition, the images of the two copies of $(W'\setminus{M})\times \mathbb{Z}_2 ^k$ in $W$ are exactly the manifolds obtained by colouring $W'\setminus{M}$ with these induced colourings, and therefore they are connected and isometric.
\end{proof}

\begin{remark}
The construction described in the proof above can also be visualized in Figure \ref{im:symm_col} and \ref{im:embedding} where the geodesic hypersurface is the purple circle. In this case, we have that $N_+ \cong N_-$ and $N$ is the mirror of $N_+$ along its boundary.
\end{remark}

\begin{figure}[H]

  \centering
  \includegraphics[scale=0.2]{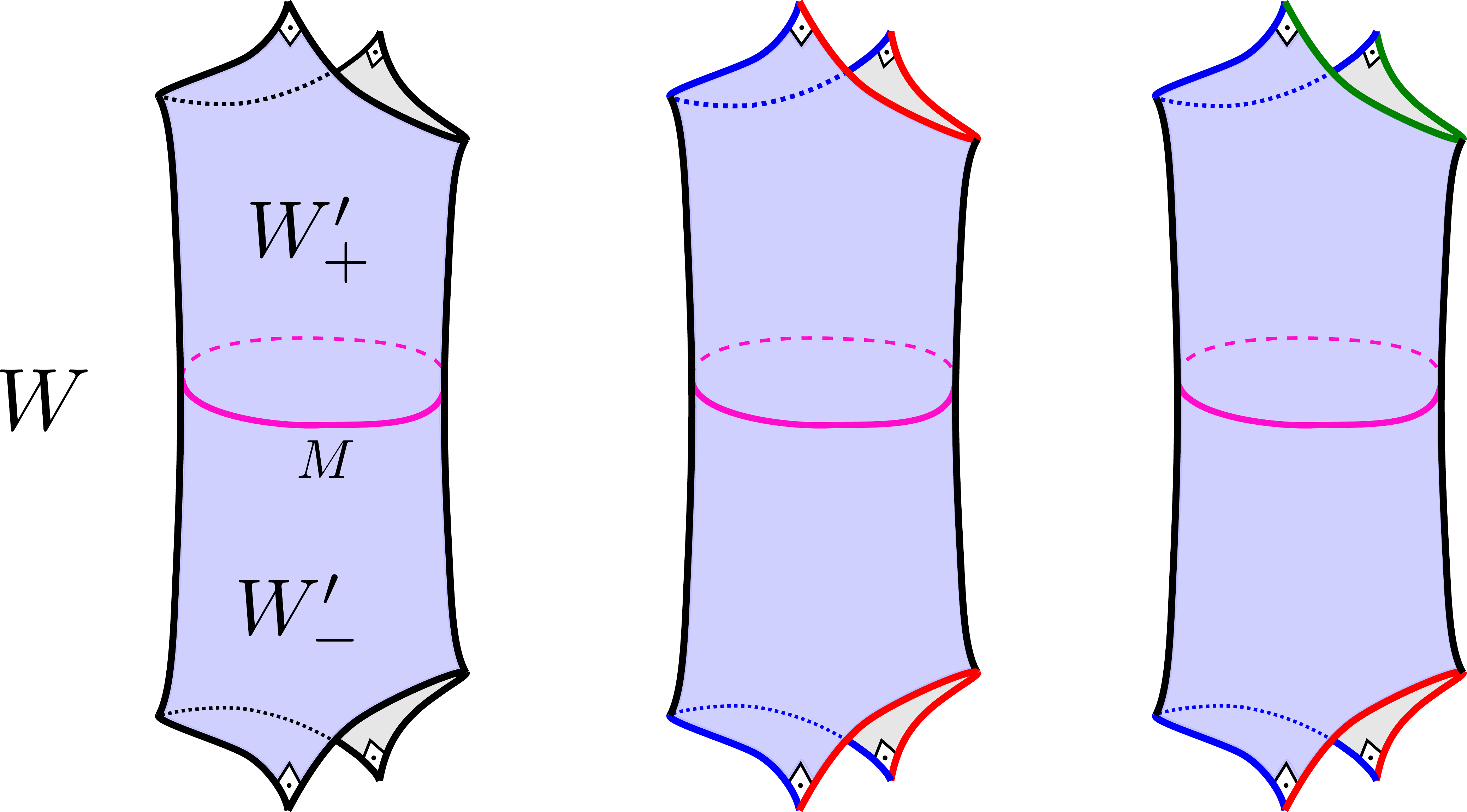}
\nota{The manifold $W$ built in Proposition \ref{prop:dodecahedral-embed}, with a symmetric and a non-symmetric colouring (see Definition \ref{defn:symm_col}). The manifold obtained using the symmetric colouring is isometric to the one in Figure  \ref{im:colourings}. See also Figure \ref{im:embedding}.}
\label{im:symm_col}
\end{figure}

\begin{figure}[H]

  \centering
  \includegraphics[scale=0.18]{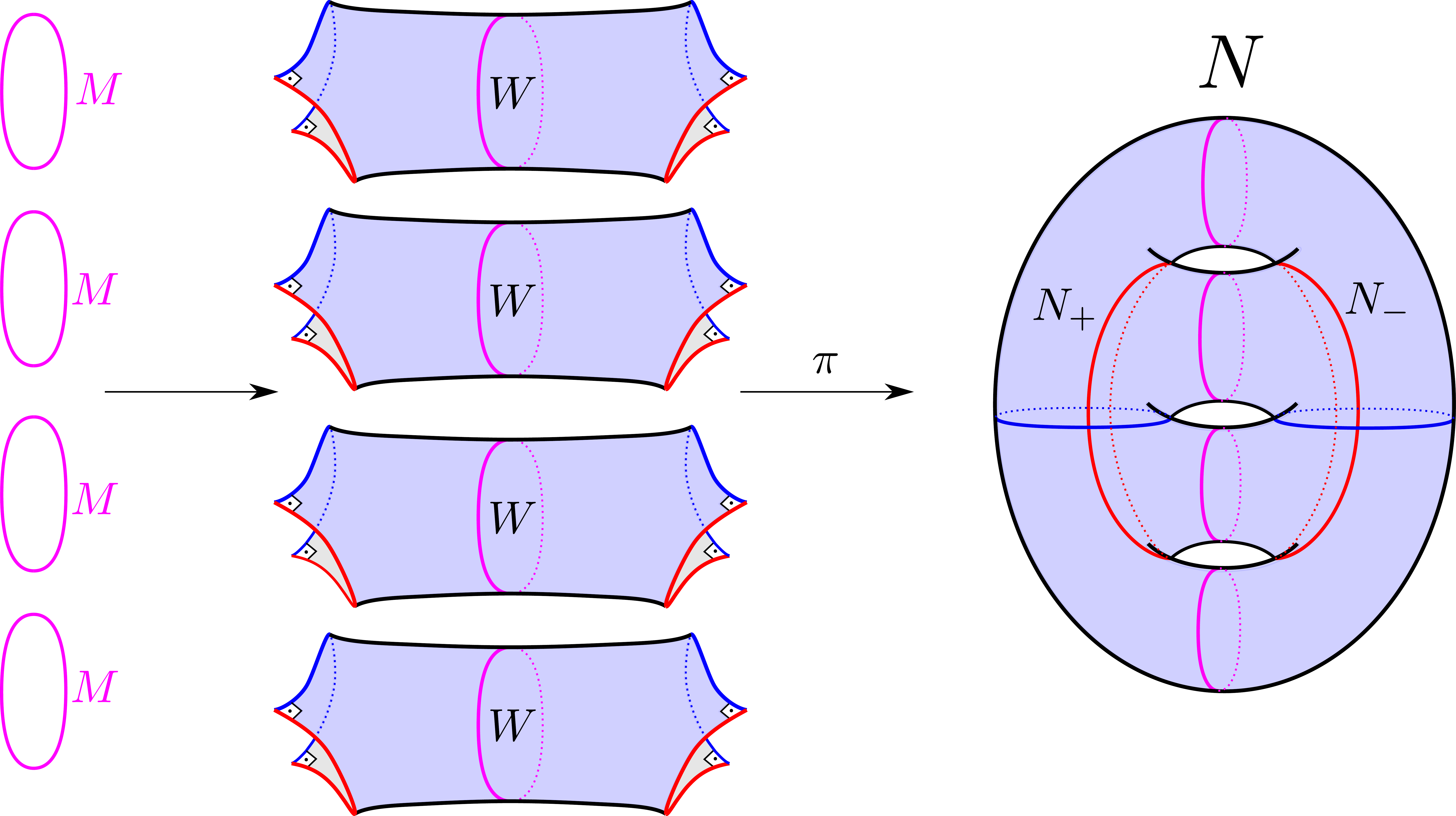}
\nota{A schematic picture of the embedding of the separating dodecahedral space, described in the proof of Lemma \ref{lemma:dodecahedral-separation}. }
\label{im:embedding}
\end{figure}

We are now ready to prove that if $M$ is an $L$-space the manifold $N$ contains a separating $L$-space:

\begin{prop}\label{Prop:separating connsum}
Suppose that the dodecahedral manifold $M$ is an $L$-space. Then the manifold $N$ can be written as $N=N_1\cup_{M'}N_2$, where $M'$ is an $L$-space and $b_2^+(N_i)\geq 1$ for $i=1,2$. In particular, by Proposition \ref{separating L-space}, the Seiberg-Witten invariants of $N$ all vanish.
\end{prop}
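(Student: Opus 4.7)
My plan is to combine the $2^k$ parallel copies of $M$ provided by Lemma \ref{lemma:dodecahedral-separation} into a single separating $L$-space $M'$ via ambient $1$-handle surgery, and then verify that the resulting decomposition satisfies the $b_2^+$ condition. Specifically, I would fix a spanning tree on the $2^k$ copies of $M$ and, for each of its $2^k-1$ edges, pick a properly embedded arc $\gamma\subset N$ joining the two corresponding copies. An ambient $1$-handle surgery along a single arc $\gamma$ removes a small $3$-ball from each of its endpoint copies $M_a,M_b$ and replaces them with a tube $S^2\times[0,1]$ running inside a tubular neighbourhood $\nu(\gamma)=\gamma\times D^3$; the underlying manifold $N$ is unchanged, while the two copies $M_a, M_b$ get replaced by their connected sum $M_a\#M_b$. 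Iterating along the tree produces a single connected separating hypersurface $M'\cong \#_{i=1}^{2^k}M$ and hence a decomposition $N=N_1\cup_{M'}N_2$.

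To see that $M'$ is an $L$-space, I would invoke the Künneth formula for $\widehat{HF}$ of connected sums of rational homology spheres due to Ozsv\'ath--Szab\'o together with the multiplicativity of $|H_1|$ under connected sum; these give
\[
\rank\widehat{HF}(M')=\bigl(\rank\widehat{HF}(M)\bigr)^{2^k}=|H_1(M;\mathbb{Z})|^{2^k}=|H_1(M';\mathbb{Z})|,
\]
so $M'$ satisfies Definition \ref{defn:Lspace}.

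The most delicate step is the $b_2^+$ condition. Since $M'$ is a rational homology $3$-sphere, the Mayer--Vietoris sequence with real coefficients yields $H^2(N;\mathbb{R})\cong H^2(N_1;\mathbb{R})\oplus H^2(N_2;\mathbb{R})$, orthogonally with respect to the intersection form, whence $b_2^+(N)=b_2^+(N_1)+b_2^+(N_2)$. Applying the same reasoning to the decomposition $N=N_+\cup_{M\times\mathbb{Z}_2^k}N_-$ and using the isometry $N_+\cong N_-$ induced by the symmetric colouring (cf.\ Remark \ref{rem:construction}) gives $b_2^+(N_{\pm})=b_2^+(N)/2$. By choosing the surgery arcs equivariantly under the involution swapping $N_+$ and $N_-$, and using the local excision computation $H^k(\nu(\gamma),\nu(\gamma)\setminus\gamma)\cong \widetilde{H}^{k-1}(S^2)$ (which shows that ambient $1$-surgery along a properly embedded arc only affects degree-$3$ cohomology on each side), I would conclude $b_2^+(N_i)=b_2^+(N)/2$ for $i=1,2$. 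The remaining obstacle is therefore to verify $b_2^+(N)\ge 2$: this reduces to a Betti-number and signature computation on the specific $4$-manifolds $\mathcal{N}_{11}$ and $\mathcal{N}_{28}$ of Section \ref{sec:concrete_examples}, for which closedness and hyperbolicity force $\sigma(N)=0$ and hence $b_2^+(N)=b_2(N)/2$, reducing the whole question to $b_2(N)\ge 4$.
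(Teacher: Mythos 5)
Your construction of $M'$ by tubing the $2^k$ copies of $M$ along a spanning tree of arcs, the appeal to the Ozsv\'ath--Szab\'o connected-sum formula, and the Mayer--Vietoris splitting $b_2^+(N)=b_2^+(N_1)+b_2^+(N_2)$ are all essentially the argument in the paper. However, there is a genuine gap at the point you yourself flag as "the remaining obstacle": you never actually establish $b_2^+(N)\geq 2$, but only reduce it to $b_2(N)\geq 4$ and propose to check this computationally on $\mathcal{N}_{11}$ and $\mathcal{N}_{28}$. The proposition is stated for \emph{any} $N$ arising from a symmetric colouring of $W$, so deferring the key inequality to two specific examples does not prove it, and even for those examples the verification is left undone. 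The paper closes this step with a general argument: since $\sigma(N)=0$ for a closed hyperbolic $4$-manifold, $\chi(N)=2(1-b_1(N)+b_2^+(N))$, so $b_2^+(N)\geq \chi(N)/2-1$; and a Gauss--Bonnet/orbifold-covering computation gives $\chi(N)=\frac{17}{2}\cdot n$ for a manifold tessellated into $n$ right-angled $120$-cells, whence $\chi(N)>4$ and $b_2^+(N)\geq 2$ always. This is the one idea your proposal is missing.

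A secondary problem is your suggestion to choose the surgery arcs "equivariantly under the involution swapping $N_+$ and $N_-$". An arc invariant under that involution would have to meet the fixed hypersurface $M\times\mathbb{Z}_2^k$, and a genuinely invariant \emph{family} of arcs comes in swapped pairs, which creates cycles in your graph of tubes: tubing two copies of $M$ along two disjoint arcs produces an extra $S^1\times S^2$ summand, so $M'$ would no longer be a rational homology sphere, let alone an $L$-space. The correct (and simpler) choice is the paper's: place all $2^k-1$ arcs inside $N_+$, set $N_1=N_-\cup U$ and $N_2=N_+\setminus U$ with $U$ a tubular neighbourhood of the arcs, and observe that attaching $4$-dimensional $1$-handles does not change $b_2$, so $b_2^+(N_1)=b_2^+(N_-)$ and then $b_2^+(N_2)=b_2^+(N)-b_2^+(N_1)=b_2^+(N_+)$ by the Mayer--Vietoris splitting you already have.
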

\begin{proof}
We use the same notations of Lemma \ref{lemma:dodecahedral-separation} and we divide the proof in few steps.
\begin{itemize}
    \item \emph{Step 1: $b_2^+(N)\geq 2$.}
    
    Recall that the Euler characteristic of a closed, connected, orientable $4$-manifold satisfies
    $$
    \chi=2-2b_1+b_2
    $$
    and that $b_2=b_2^++b_2^-$. Moreover, as a consequence of the Hirzebruch signature formula and \cite[Theorem 3]{C}, hyperbolic $4$-manifolds have signature zero and therefore $b_2^+=b_2^-$; hence, the Euler characteristic of $N$ satisfies
    $$
    \chi(N)=2(1-b_1(N)+b_2^+(N)).
    $$
    This formula implies that if $\chi(N)>4$ then $b_2^+(N)\geq 2$. In order to conclude we note that if $N$ is tessellated into $n$ 120-cells, then 
    \[ \chi(N)= \frac{17}{2}\cdot n. \]
    This can be proved by using the notions of \emph{orbifold covering} and \emph{characteristic simplex}, see \emph{e.g.}\ \cite[Section 1.4]{MartelliSurvey}. As a consequence, we have that $\chi(N) >8$.
    
    \item \emph{Step 2: $b_2^+(N_+)=b_2^+(N_-)\geq 1$ and $b_2^+(N_+)+b_2^+(N_-)=b_2^+(N)$.}
    
    Recall from the construction of $N$ and from the proof of Lemma \ref{lemma:dodecahedral-separation} that $N$ is obtained by gluing together $N_+$ and $N_-$ along their boundaries (that are made up by $2^k$ disjoint copies of the dodecahedral manifold $M$). Also recall that $N_+$ is isometric, and therefore diffeomorphic, to $N_-$. This implies that $b_2^+(N_+)=b_2^+(N_-)$. Since $N_+$ and $N_-$ are glued along a disjoint union of rational homology spheres, by applying the Mayer-Vietoris sequence we deduce that $H_2(N,\Q)\cong H_2(N_+,\Q)\oplus H_2(N_-,\Q)$ and that 
    $$
    b_2^+(N)=b_2^+(N_+)+b_2^+(N_-).
    $$
    Since $b_2^+(N)\geq 2$ we also deduce that $b_2^+(N_+)\geq 1$ and $b_2^-(N_-)\geq 1$.
    \item \emph{Step 3: there exists an $L$-space $M'$ such that $N=N_1\cup_{M'}N_2$.}
    
    The $L$-space $M'$ is diffeomorphic to the connected sum of copies of $M$ and $\overline{M}$, where $\overline{M}$ denotes $M$ with the opposite orientation. This operation of connected sum can be performed inside $N$ in the following way: we label the boundary components of $N_+$ with numbers $\{1,2\dots, 2^k\}$ and we consider $(2^k-1)$ pairwise disjoint properly embedded arcs $\alpha_1,\dots, \alpha_{2^k-1}$ in $N_+$ so that $\alpha_i$ connects the $i$-th and the $(i+1)$-th boundary components of $N_+$. If we denote with $U$ a tubular neighbourhood of these arcs in $N_+$ we have that $N=N_1\cup_\partial N_2$ where $N_1=N_-\cup U$ and $N_2=N_+\setminus{U}$, and where $\partial N_1=\partial N_2$ is exactly the connected sum $M'$. Since $M$ is an $L$-space, $M'$ is an $L$-space as a consequence of \cite[Proposition 6.1]{OS2}. See Figure \ref{Separating_conn_Lspace} for a schematic picture of this construction. By studying the orientations induced on the copies of $W$ embedded in $N$, one can state more precisely that \[M'\cong 2^{k-1}M \# 2^{k-1}\overline{M},\]  
    even if we do not need this for our construction.

\begin{figure}[H]

  \centering
  \includegraphics[scale=0.5]{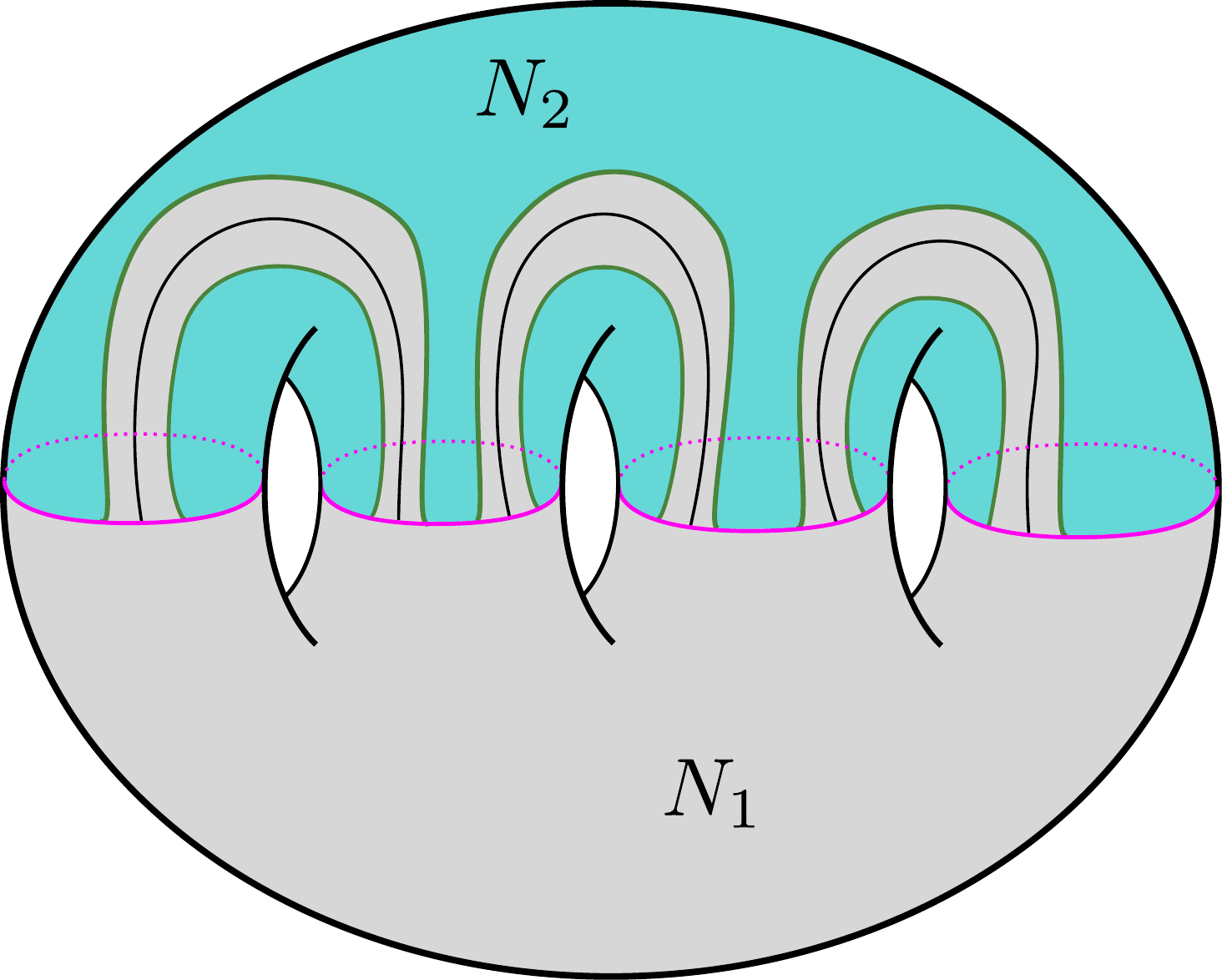}
\nota{A schematic picture of the construction presented in the proof of \emph{Step 3}. }
\label{Separating_conn_Lspace}
\end{figure}
    \item \emph{Step 4: $b_2^+(N_1)=b_2^+(N_2)\geq 1$.}
    
    By virtue of \emph{Step 2}, to prove that
    $$
    b_2^+(N_1)=b_2^+(N_2)\geq 1
    $$
    it will be sufficient to show that $b_2^+(N_1)=b_2^+(N_-)$ and $b_2^+(N_2)=b_2^+(N_+)$.
    The manifold $N_1$ is obtained by gluing $4$-dimensional $1$-handles to $N_-$, that is to say, by gluing copies of $\mathbb{D}^1\times \mathbb{D}^3$ to $N_-$ along $\partial \mathbb{D}^1\times \mathbb{D}^3$. Since the gluing regions have vanishing first and second homology groups, it is a consequence of the Mayer-Vietoris sequence that 
    $$
    H_2(N_1,\Z)=H_2(N_-,\Z)
    $$
    and that 
    $$
    b_2^+(N_1)=b_2^+(N_-).
    $$
    
    We are left to prove that $b_2^+(N_2)=b_2^+(N_+)$ holds. To do this we notice that since $N$ is obtained by gluing $N_+$ and $N_-$ along a rational homology sphere, we can apply the same reasoning of \emph{Step 2} to deduce that 
    $$
    b_2^+(N)=b_2^+(N_1)+b_2^+(N_2), 
    $$
    and as a consequence of $b_2^+(N_1)=b_2^+(N_-)$ we have that
    $$
    b_2^+(N_2)= b_2^+(N)-b_2^+(N_1)= b_2^+(N)-b_2^+(N_-)=b_2^+(N_+).
    $$
\end{itemize}
This concludes the proof.
\end{proof}

\subsection{Some concrete examples}\label{sec:concrete_examples}
To conclude we now use the theory and the construction introduced in the previous section to present some explicit examples of hyperbolic 4-manifolds fulfilling the requirements of Question \ref{qst1}. Recall from Theorem \ref{thm:classdodman} that we have six dodecahedral manifolds in the census $\mathscr{D}$ that are $L$-spaces. To produce our examples we want to apply Proposition \ref{Prop:separating connsum} and to do this we need to find colourings of the manifols with corners containing our dodecahedral $L$-spaces.
Before trying to find colourings, we have to check whether in the cases of our interests the construction of Proposition \ref{prop:dodecahedral-embed} yields manifolds with embedded facets (recall Example \ref{example_non_embedded}). Notice that by \cite{Goerner}, the six manifolds in $\mathscr{D}$ that are $L$-spaces admit only one tessellation in right-angled dodecahedra up to combinatorial isomorphism. We have the following:

\begin{prop}\label{prop:tackle}
Let $M$ be one of the six dodecahedral $L$-spaces in $\mathscr{D}$ (see Table \ref{table:dodecahedralmanifolds}). Then the manifold $W$ built as in Remark \ref{rem:construction} using its tessellation in dodecahedra has embedded facets if and only if $M$ has index $11$ or $28$.
\end{prop}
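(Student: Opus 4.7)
My plan is to translate the topological condition on embedded facets into a purely combinatorial condition on the pentagonal face identifications in $M$, and then verify that condition by direct inspection of the six face-pairings.

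First, I would analyse the construction of Remark \ref{rem:construction} in detail to pinpoint exactly when it produces a non-embedded facet. Every pentagonal gluing $F_1\leftrightarrow F_2$ with $F_1\subset\mathcal{D}_i$, $F_2\subset\mathcal{D}_j$ extends to a gluing of the dodecahedral facets $\mathcal{A}_1\subset\mathcal{H}_i$ and $\mathcal{A}_2\subset\mathcal{H}_j$ adjacent respectively to $\mathcal{D}_i$ and $\mathcal{D}_j$. The resulting facet of $W'$ fails to be embedded precisely when the two identified dodecahedra lie in a common $120$-cell and already share a pentagonal sub-face there. Using that exactly three dodecahedral $3$-cells meet along each edge of a right-angled $120$-cell, one shows that two facets of $\mathcal{H}_i$ adjacent to $\mathcal{D}_i$ along pentagons $F_1$ and $F_2$ themselves share a pentagonal sub-face if and only if $F_1$ and $F_2$ share an edge of $\mathcal{D}_i$. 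This gives the criterion I expect: \emph{$W$ has a non-embedded facet if and only if some dodecahedron $\mathcal{D}_i$ in the tessellation of $M$ has two pentagonal faces $F_1$, $F_2$ sharing an edge of $\mathcal{D}_i$ that are identified with each other by the face-pairing of $M$.}

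Once this criterion is in place, the remaining work is a finite check. By \cite{Goerner}, each of the six dodecahedral $L$-spaces admits a unique tessellation into right-angled dodecahedra up to combinatorial isomorphism, so the face-pairing data is intrinsic and can be read off explicitly (for instance via the cubical census in SnapPy). For each of the indices $0,2,8,11,15,28$ I would list the pentagonal face identifications and check whether any of them identifies two edge-adjacent pentagons of the same $\mathcal{D}_i$. The proposition asserts that such a ``bad'' identification exists exactly for indices $0,2,8,15$, and does not exist for indices $11$ and $28$.

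The main conceptual obstacle is the combinatorial analysis underlying the criterion, in particular ruling out pathologies coming from longer identification chains: if the two identified copies lie in distinct $\mathcal{H}_i$ and $\mathcal{H}_j$, then they can only meet inside $W'$ along their glued dodecahedral facets, so no nontrivial self-identification of a sub-face occurs; hence a non-embedded facet must come from a direct self-gluing inside a single $120$-cell, which is exactly the configuration above. Once this is clarified, the per-manifold verification is a routine combinatorial computation.
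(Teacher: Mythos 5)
Your overall strategy---reduce ``embedded facets'' to a combinatorial condition on the face-pairing of $M$ and then check that condition on the six tessellations---is reasonable, but the reduction is exactly where the content lies, and it is not established; the paper itself proves no such criterion and instead assembles $W$ as in Remark \ref{rem:construction} by computer and tests all facets for self-adjacency directly. A first concrete problem is that you misidentify which facets can fail to be embedded. Once $\mathcal{A}_1\subset\mathcal{H}_i$ and $\mathcal{A}_2\subset\mathcal{H}_j$ are identified they become \emph{interior} walls of $W'$ (this is precisely why $M$ is an isolated facet of $W'$), so ``the resulting facet of $W'$'' is not a facet at all. The facets of $W'$ other than $M$ are the \emph{unglued} dodecahedral facets of the $\mathcal{H}_i$; across each interior wall, two of them meet at total dihedral angle $\pi/2+\pi/2=\pi$ along an identified pentagon and therefore merge into a single facet, and non-embeddedness means that such an assembled facet meets itself. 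This is exactly what Example \ref{example_non_embedded} exhibits: the offending pentagons of $R(6)$ are the ones \emph{not} adjacent to the hexagon.

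Second, and more seriously, the implication you actually need for indices $11$ and $28$---no edge-adjacent self-identification implies all facets embedded---is the one you dismiss in a sentence, and the dismissal does not address the real danger. An assembled facet is a chain $\mathcal{C}_1\cup\dots\cup\mathcal{C}_m$ of unglued facets merged across several interior walls, possibly passing through several $120$-cells or through a single $120$-cell glued to another along more than one dodecahedral facet; it is non-embedded as soon as any two links of the chain are adjacent, or coincide, inside some $\mathcal{H}_i$, even if no single dodecahedron of $M$ has two edge-adjacent faces identified with each other. Ruling this out requires controlling how these chains close up globally, not just the local picture around one gluing, and your remark that copies in distinct $\mathcal{H}_i$, $\mathcal{H}_j$ ``can only meet along their glued dodecahedral facets'' is false for exactly this reason. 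So either prove the criterion in full, including the chain analysis (and then still carry out the finite check, which you only describe), or do what the paper does and verify embeddedness directly on the assembled $W$; as written, the proposal establishes neither direction of the equivalence.
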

\begin{proof}
The proof is computer-based. We build the manifolds following the instructions of Remark \ref{rem:construction} starting with all the six dodecahedral $L$-spaces in $\mathscr{D}$ and we find non-embedded facets when performing this construction starting with the manifolds indexed with 0, 2, 8, 15. The code that we used can be found in \cite{codice}.
\end{proof}

Let $W_{11}$ (resp.\ $W_{28}$) be the manifold with corners built as in Remark \ref{rem:construction} starting with the dodecahedral manifold in $\mathscr{D}$ with index $11$ (resp.\ $28$). We have the following:
\begin{lemma}
The manifolds with corners $W_{11}$ and $W_{28}$ admit a symmetric 6-colouring.
\end{lemma}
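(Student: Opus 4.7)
The proof is computer-based, in the same spirit as the proof of Proposition~\ref{prop:tackle}. The key preliminary observation is that, by the construction recalled in Remark~\ref{rem:construction}, the facet $M$ of $W'$ is isolated. Consequently, the facets of $W$ form the disjoint union $\mathcal{F}_{W}=(\mathcal{F}_{W'_+}\setminus M)\sqcup(\mathcal{F}_{W'_-}\setminus M)$, and no facet on the first side shares a codimension-$2$ stratum in $W$ with any facet on the second side. Moreover, since $M$ is isolated in $W'$, the adjacency structure among the facets of $W'_+\setminus M$, viewed inside $W$, is identical to the one they carry inside $W'$. A symmetric $k$-colouring of $W$ is therefore the same data as a proper $k$-colouring of the facet-adjacency graph $\Gamma_i$ of $W'_i\setminus M$, for $i\in\{11,28\}$, the opposite side inheriting the colouring via the involution $s$ of Remark~\ref{rem:construction}.

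It thus suffices to exhibit a proper $6$-colouring of $\Gamma_{11}$ and of $\Gamma_{28}$. The explicit gluing data defining $W'_{11}$ and $W'_{28}$ is already available from the computer-assisted proof of Proposition~\ref{prop:tackle}, where each of the four $120$-cells involved in the construction of $W'_i$ is enumerated together with the identifications of its $120$ dodecahedral facets. From these data we read off the graphs $\Gamma_{11}$ and $\Gamma_{28}$, and we run a standard backtracking graph-colouring routine in Sage~\cite{sagemath} requesting a proper $6$-colouring. In both cases such a colouring is produced; mirroring it through $s$ then yields the desired symmetric $6$-colouring of $W_{11}$ and of $W_{28}$, whose explicit output is recorded in~\cite{codice}.

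The main technical obstacle lies in the combinatorial bookkeeping needed to enumerate the facets of $W'_i$ and their codimension-$2$ intersections correctly, so that $\Gamma_i$ really encodes the adjacencies seen inside $W$. This is handled by the same routine used for Proposition~\ref{prop:tackle}, which builds each $W'_i$ by attaching a $120$-cell along every dodecahedron of the tessellation of the underlying manifold in $\mathscr{D}$ and then extending the gluings of dodecahedra to gluings of the corresponding dodecahedral facets of the $120$-cells. Given the modest size of the problem --- four $120$-cells per $W'_i$, each facet being adjacent to only twelve others inside its own $120$-cell --- six colours are comfortably more than enough, and the colouring search terminates essentially instantaneously.
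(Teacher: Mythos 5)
Your approach is essentially the paper's: reduce a symmetric colouring of $W_i$ to a proper colouring of the facet-adjacency graph of one copy of $W'_i\setminus M$ (the two sides of $W$ carrying no mutual adjacencies), find that colouring by computer, and extend it via the involution $s$ as in Lemma \ref{lem:symm_colouring}. One caveat: your closing heuristic that six colours are ``comfortably more than enough'' because each facet meets only twelve others is not a valid argument --- maximum degree $12$ only guarantees $13$ colours greedily, the facets of $W'_i$ are amalgams of $120$-cell facets rather than the facets themselves (each component of the graph has $334$ vertices), and the paper in fact verifies that these graphs are \emph{not} $5$-colourable, so $6$ is tight --- but since the colouring is actually produced and checked by the computer search, this does not affect correctness.
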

\begin{proof}
The proof is computer-based. To search for a colouring of a manifold with corners $W$, we build a graph $G_W$ in the following way: we take one vertex for each facet and we add one edge between two vertices if the corresponding facets are adjacent.

Once we have $G_W$, finding a colouring for the manifold with corners $W$ is equivalent to finding a colouring of the graph $G_W$. The problem of colouring a graph is well known and Sage \cite{sagemath} provides a natural environment to search for minimal colouring of graphs. In our cases, the graph $G_{W_{11}}$ is made out by two equivalent connected components with 334 vertices, one corresponding to the facets of $W'_+$ and one corresponding to the ones of $W'_-$. In order to find a symmetric colouring of $W_{11}$ it is sufficient to colour only one of these two components and then extend the colouring as in the proof of Lemma \ref{lem:symm_colouring}. By using the  Mixed Integer Linear Programming solver CPLEX \cite{cplex} we find a 6-colouring in less than 5 minutes. We are also able to prove that $G_{W_{11}}$ is not 5-colourable.

The same holds also for $G_{W_{28}}$. We just point out that the fact that these graphs have the same number of vertices is not a casuality: this number in fact depends only on the number $n$ of dodecahedra in the tessellation of the dodecahedral 3-manifold. Namely, the total number of vertices in $G_W$ is \[ 2 \cdot n \cdot \tonde*{\frac{20}{8} + \frac{12}{2} + 30 + 12 + 20 + 12 + 1 }, \]where this formula descends from the discussion in \cite[Proof of Lemma 7]{M}.
\end{proof}

From now on we choose one specific 6-colouring for $W_{11}$ (resp.\ $W_{28}$), that we denote by $\lambda_{11}$ (resp.\ $\lambda_{28}$) and that can be found in \cite{codice}. We denote the hyperbolic 4-manifold obtained by this coloured manifold with corners (recall Proposition \ref{prop:corner-angle colouring}) with $\mathcal{N}_{11}$ (resp.\ $\mathcal{N}_{28}$). The following holds:

\begin{theorem}\label{thm:explicit_manifolds}
The manifold $\mathcal{N}_{11}$ (resp.\ $\mathcal{N}_{28}$):
\begin{enumerate}
    \item is a connected, orientable, closed, hyperbolic 4-manifold;
    \item is tessellated in $2^9$ right-angled hyperbolic 120-cells;
    \item can be written as $N_1\cup_{M'}N_2$, where $M'$ is an $L$-space and $b_2^+(N_i)\geq 1$ for $i=1,2$. In particular, its Seiberg-Witten invariants all vanish;
    \item has Betti numbers with coefficients in $\mathbb{R}$ and $\mathbb{Z}_2$ as described in Table \ref{table:homology_N_11} (resp.\ Table \ref{table:homology_N_28}).
\end{enumerate}
\end{theorem}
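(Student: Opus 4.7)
The plan is to assemble the pieces already developed in Sections \ref{subsection:embedding dodecahedral} and \ref{sec:concrete_examples}: claims (1), (2) and (3) fall out as essentially formal consequences of previously established results, while claim (4) is a direct (but nontrivial) computer calculation on the explicit cellular structure of $\mathcal{N}$.

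For claims (1) and (2), I would first note that the underlying dodecahedral $L$-space $M$ has $n=4$ dodecahedra, so Proposition \ref{prop:dodecahedral-embed} yields the manifold with corners $W$ tessellated into $2n=8$ right-angled 120-cells, and Proposition \ref{prop:tackle} guarantees that the facets of $W_{11}$ and $W_{28}$ are embedded, so that the symmetric 6-colouring $\lambda$ is admissible. Then Proposition \ref{prop:corner-angle colouring} immediately produces $\mathcal{N}=\mathcal{M}_\lambda$ as a closed, connected, orientable hyperbolic 4-manifold tessellated into $2^6$ copies of $W$, i.e.\ into $2^6\cdot 8 = 2^9$ right-angled 120-cells.

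For claim (3), since $\lambda$ is symmetric by construction (Definition \ref{defn:symm_col}) and since $M$ is an $L$-space by Theorem \ref{thm:classdodman}, Proposition \ref{Prop:separating connsum} applies verbatim and produces the decomposition $\mathcal{N}=N_1\cup_{M'} N_2$ with $M'$ an $L$-space and $b_2^+(N_i)\geq 1$ for $i=1,2$. The vanishing of the Seiberg--Witten invariants is then immediate from Proposition \ref{separating L-space}. So far, nothing new needs to be proved.

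The substantive content is claim (4). The cellular chain complex of $\mathcal{N}$ is completely determined by the tessellation into right-angled 120-cells together with the face identifications prescribed by the colouring $\lambda$, so I would compute the Betti numbers over $\mathbb{R}$ and over $\mathbb{Z}_2$ by explicitly building this chain complex and running linear algebra over the two fields; concretely this is carried out in Regina \cite{regina} inside a Sage \cite{sagemath} environment, with the colouring data from \cite{codice}, and the results are recorded in Tables \ref{table:homology_N_11} and \ref{table:homology_N_28}. I expect the main obstacle to be purely computational: the chain complex involves $2^9$ top-dimensional cells along with their full skeleton, which is already sizeable. This is exactly why, as mentioned in the introduction, the authors can report $\mathbb{R}$- and $\mathbb{Z}_2$-Betti numbers but not the integral homology, whose computation (requiring a Smith normal form) exceeded the available computational resources.
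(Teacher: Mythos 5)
Your proposal is correct and follows essentially the same route as the paper: points (1)--(3) are read off from Propositions \ref{prop:corner-angle colouring}, \ref{prop:tackle} and \ref{Prop:separating connsum} exactly as you describe, and point (4) is a computer computation on the cellular chain complex of the explicit tessellation. The only minor difference is an implementation detail: for the $\mathbb{R}$-coefficient case the paper computes only the two lowest boundary matrices and recovers the remaining Betti numbers from Poincar\'e duality and the Euler characteristic, rather than running the full rank computation over both fields.
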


\begin{proof}
 Point 1 is a direct consequence of Proposition \ref{prop:corner-angle colouring}. The same proposition also tells us that $\mathcal{N}_{11}$ (resp.\ $\mathcal{N}_{28}$) is tessellated in $2^6$ copies of $W_{11}$ (resp.\ $W_{28}$); since the latter is tessellated in $8$ 120-cells, we also obtain Point 2. We point out that this tesselation is explicit; in particular, in \cite{codice} there is the list of maps that describe the gluings of the facets of these $2^9$ 120-cells. Point 3 is a direct consequence of Proposition \ref{Prop:separating connsum}. The proof of Point 4 is computer-based. Using its tessellation in 120-cells, we obtain a description of $\mathcal{N}_{11}$ (resp.\ $\mathcal{N}_{28}$) as a CW-complex. In this way we can also check that the Euler characteristic is consistent with the one obtained from the formula described in the proof of Proposition \ref{Prop:separating connsum}, Step 1. We can now compute the Betti numbers using cellular homology. In particular:
 \begin{itemize}
     \item with $\mathbb{Z}_2$ coefficients, we write all the matrices that represent the boundary maps of the cellular chain complex in the standard bases (the ones given by the $n$-cells in degree $n$). Computing their rank we recover the Betti numbers;
     \item with $\mathbb{R}$ coefficients, we write two matrices that represent the boundary maps of the cellular chain complex in the standard bases: the one from the 1-cells to the 0-cells and the one from the 2-cells to the 1-cells. Computing the rank of these matrices we are able to verify that $b_0=1$ and to determine the $b_1$, and we recover the other Betti numbers using the Poincaré Duality and the Euler characteristic. Using these matrices and the Universal Coefficient Theorem, one should be able to find all the integral homology. However, the computation is too heavy for our computer resources.
 \end{itemize}
 The proof is complete.
\end{proof}

\begin{table}[H]
\begin{center}
\begin{tabular}{c || c | c | c | c | c}
$\mathcal{N}_{11}$ & $b_0$ & $b_1$ & $b_2$ & $b_3$ & $b_4$ \\
\hline
$\mathbb{R}$ & 1 & 725 & 5800 & 725 & 1 \\
$\mathbb{Z}_2$ & 1 & 746 & 5842 & 746 & 1\\

\end{tabular}
\vspace{.2 cm}
\nota{The Betti numbers of $\mathcal{N}_{11}$.} 
\label{table:homology_N_11}
\end{center}
\end{table}

\begin{table}[H]
\begin{center}
\begin{tabular}{c || c | c | c | c | c}
$\mathcal{N}_{28}$ & $b_0$ & $b_1$ & $b_2$ & $b_3$ & $b_4$ \\
\hline
$\mathbb{R}$ & 1 & 741 & 5832 & 741 & 1 \\
$\mathbb{Z}_2$ & 1 & 769 & 5888 & 769 & 1\\

\end{tabular}
\vspace{.2 cm}
\nota{The Betti numbers of $\mathcal{N}_{28}$.} 
\label{table:homology_N_28}
\end{center}
\end{table}

\subsubsection{Generalised colourings}\label{sec:generalised_colouring}

In this section we briefly describe a well-known generalisation of the notion of \emph{colouring} (see \cite{FKS} for a complete discussion). Let $W$ be a compact $n$-manifold with corners. Let $S$ be $\mathbb{Z}_2^m$, and let $e_1,\ldots,e_m$ be its standard basis. We say that an element $v \in S$ is \emph{odd} if it is a sum of an odd number of elements in the standard basis, and \emph{even} otherwise.

\begin{definition}
A \emph{generalised $m$-colouring} of $W$ is a map $\rho \colon \mathcal{F}_W \to S$ such that:
\begin{itemize}
    \item the elements in $\{ \rho(F) \}_{F \in \mathcal{F}_W}$ generate $S$;
    \item if $F_{i_1}, \ldots , F_{i_r}$ share a common subface, their images through $\rho$ are linearly independent.
\end{itemize}
\end{definition}

Given a generalised $m$-colouring $\rho$, we can define the topological space $\mathcal{M}_\rho=(W\times S)/_\sim$, where distinct $W\times \{u\}$, $W\times \{v\}$ are glued along the identity on a facet $F\in \mathcal{F}_W$ if $u-v=\rho(F)$. The analogous of Proposition \ref{prop:corner-angle colouring} holds, and can be shown combining \cite[Proposition 6]{M} and \cite[Lemma 2.4]{KMT}:
\begin{prop}
The resulting $\mathcal{M}_\rho$ is a (possibly non-orientable) connected hyperbolic $n$-manifold tessellated into $2^m$ copies of $W$. If the elements in $\{ \rho(F) \}_{F \in \mathcal{F}_W}$ are all odd, it is orientable.
\end{prop}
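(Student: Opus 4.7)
The plan is to mirror the strategy of \cite[Proposition 6]{M} (which handles the classical colouring case where $\rho$ takes values in the standard basis), with the linear independence condition replacing the implicit fact that basis vectors are linearly independent. The key observation is that the generalised colouring is built to ensure exactly the local combinatorial structure needed for manifoldness, while leaving room for more flexible choices of vectors in $S$.

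First I would verify that $\mathcal{M}_\rho$ is a hyperbolic $n$-manifold. Interior points of each copy $W\times\{v\}$ inherit the hyperbolic structure of $W$ trivially, and points on the interior of a single facet have a neighbourhood obtained by gluing two half-balls via a mirror isometry, so they are also fine. The content is at a codimension-$r$ stratum where facets $F_{i_1},\dots,F_{i_r}$ meet: the copies of $W$ that accumulate around this stratum are indexed by the subgroup $H\leq S$ generated by $\rho(F_{i_1}),\dots,\rho(F_{i_r})$, and linear independence of these images forces $|H|=2^r$. This matches exactly the $2^r$ orthants around a codimension-$r$ corner of $W$ in the local model, and the gluings by mirror isometries assemble them into a full hyperbolic ball. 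Tessellation by $2^m=|S|$ copies of $W$ is immediate by construction.

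For connectedness, I would use that $\{\rho(F)\}_{F\in \mathcal{F}_W}$ generates $S$: any $v\in S$ can be written as a sum $\rho(F_{j_1})+\cdots+\rho(F_{j_t})$, and concatenating paths in $W$ that cross these facets successively produces a path in $\mathcal{M}_\rho$ from a chosen basepoint in $W\times\{0\}$ to its counterpart in $W\times\{v\}$. For orientability under the hypothesis that every $\rho(F)$ is odd, I would define the parity map $p\colon S\to\mathbb{Z}_2$ sending $v$ to the sum of its coordinates mod $2$, fix an orientation on $W$, and equip $W\times\{v\}$ with that orientation when $p(v)=0$ and the opposite one when $p(v)=1$. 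Each facet-gluing $W\times\{u\}\leftrightarrow W\times\{v\}$ with $u-v=\rho(F)$ is an orientation-reversing mirror on the facet; since $\rho(F)$ is odd, $p(u)\ne p(v)$, and the two sign choices exactly compensate, so the global orientation is consistent.

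The main obstacle I anticipate is the verification of the local manifold structure at high-codimension strata, since one must check not only that the correct number of copies $2^r$ shows up but also that their combinatorial arrangement around the stratum really reproduces the neighbourhood of a corner in $\mathcal{O}^n$. This is where the linear independence hypothesis does its real work, and it is precisely the point at which the generalisation from bases to arbitrary generating sets could fail if the condition were relaxed; fortunately the bookkeeping is essentially identical to \cite[Lemma 2.4]{KMT}, which can be quoted to package this step.
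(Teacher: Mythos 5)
Your argument is correct and is exactly the content that the paper compresses into a citation: the paper proves this proposition simply by invoking \cite[Proposition 6]{M} together with \cite[Lemma 2.4]{KMT}, and your write-up (linear independence giving the $2^r$ orthants at a codimension-$r$ stratum, the generating condition giving connectedness, and the parity homomorphism giving orientability when all colours are odd) is precisely the argument packaged in those two references. No gap; same approach.
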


\begin{remark}\label{rem:col_to_gen_col}
Given a $k$-colouring $\lambda \colon W \to \{ 1,\ldots,k  \}$, one can associate to it a generalised $k$-colouring $\rho_{\lambda}$ by taking $S=\mathbb{Z}_2^k$ and defining $\rho(F)=e_{\lambda(F)}$. This is a generalised colouring because in a compact manifold with corners, facets that share a common subface are pairwise adjacent (see Example \ref{ex:corner-angle colouring}).
The manifolds $\mathcal{M}_\lambda$ and $\mathcal{M}_{\rho}$ are naturally isometric.
\end{remark}

Manifolds obtained by colourings are easier to visualize than those obtained by generalised colourings. On the other hand, generalised colourings often give the chance to obtain manifolds tessellated with a lower number of copies of $W$, as the following proposition shows (see also \cite[Proposition 3.1.16]{LeonardoPHDThesis}):

\begin{prop}
Let $\lambda$ be a $k$-colouring of a compact $n$-manifold with corners $W$. Let $\rho_\lambda \colon \mathcal{F}_W \to \mathbb{Z}_2^{k-1}$ be the map:
 \begin{equation}
    \rho(F) =
    \begin{cases*}
      e_{\lambda(F)} & if $\lambda(F) \neq k$ \\
      e_1 + \ldots + e_{k-1}        & if $\lambda(F) = k$
    \end{cases*}.
  \end{equation}
  If $k$ is even and $k > n$, $\rho_\lambda$ is a generalised $(k-1)$-colouring such that all the elements in $\{ \rho_\lambda(F) \}_{F \in \mathcal{F}_W}$ are odd.
\end{prop}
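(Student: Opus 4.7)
The plan is to verify directly the two axioms in the definition of a generalised colouring together with the oddness claim. Set $S = \mathbb{Z}_2^{k-1}$ and think of $\rho_\lambda$ as taking a facet either to a standard basis vector (when $\lambda(F)\ne k$) or to the ``all-ones'' vector $\omega = e_1+\cdots+e_{k-1}$ (when $\lambda(F)=k$). The generation condition is essentially free: since $\lambda$ is surjective onto $\{1,\ldots,k\}$, each $e_j$ with $j<k$ is already in the image of $\rho_\lambda$, and $\{e_1,\ldots,e_{k-1}\}$ is a basis of $S$. The oddness claim is equally quick: $e_j$ is a single basis vector (odd), while $\omega$ is a sum of $k-1$ basis vectors, and $k$ even forces $k-1$ odd.

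The one nontrivial axiom is linear independence on facets sharing a common subface. The first step here is to bound the number $r$ of such facets. By the local orthant model $\mathcal{O}^n$, at most $n$ facets of $W$ can meet at a common subface, so $r\le n$, and the hypothesis $k>n$ gives $r\le n<k$, i.e. $r\le k-1$. Moreover, as noted in Remark \ref{rem:col_to_gen_col}, facets that share a common subface are pairwise adjacent, so $\lambda$ assigns them pairwise distinct values; in particular at most one of them, say $F_{i_1}$, can carry the colour $k$.

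Now split into two cases. If none of the $F_{i_j}$ has colour $k$, their images are distinct standard basis vectors $e_{a_1},\ldots,e_{a_r}$ of $S$, hence linearly independent. If exactly one, say $F_{i_1}$, has colour $k$, then the images are $\omega,e_{a_2},\ldots,e_{a_r}$ with the $a_j$ pairwise distinct in $\{1,\ldots,k-1\}$. Because $r-1\le n-1<k-1$, at least one basis index $j_0\in\{1,\ldots,k-1\}$ is missing from $\{a_2,\ldots,a_r\}$. Reading off the $j_0$-coordinate of any $\mathbb{Z}_2$-linear relation $c_1\omega+c_2 e_{a_2}+\cdots+c_r e_{a_r}=0$ forces $c_1=0$; the remaining relation is then a relation among distinct basis vectors, so the other coefficients vanish as well.

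The main obstacle is really just pinning down the combinatorial bound in the second paragraph and organising the case split cleanly; conceptually everything else is automatic. The hypothesis $k>n$ is used exactly once, to guarantee a ``spare'' basis index $j_0$ so that the coefficient of $\omega$ can be isolated, and the parity hypothesis on $k$ is used exactly once, to make $\omega$ odd.
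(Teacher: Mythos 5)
Your proof is correct and follows essentially the same route as the paper's: generation is immediate from surjectivity of $\lambda$, the bound $r\le n<k$ combined with the pairwise-distinctness of the colours reduces everything to a two-case check, and oddness of $\omega=e_1+\cdots+e_{k-1}$ comes from $k$ being even. The only difference is that you spell out the final independence step (isolating the coefficient of $\omega$ via a spare basis index $j_0$), which the paper leaves as ``easy to show.''
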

\begin{proof}
The set $\{ \rho_\lambda(F) \}_{F \in \mathcal{F}_W}$ generates $\mathbb{Z}_2^{k-1}$ because it contains $e_1,\ldots,e_{k-1}$.
To finish the proof we just need to show that if $F_{i_1}, \ldots , F_{i_r}$ share a common subface, their images through $\rho_\lambda$ are linearly independent.
Since the dimension of $W$ is $n$, we know that $r \leq n < k$. Since $\lambda$ is a colouring, we know that the set $A= \{\lambda(F_{i_1}), \ldots , \lambda(F_{i_r}) \}$ contains $r$ distinct elements. Then we can conclude that:

\begin{itemize}
    \item if $k \not \in A$, $\rho_\lambda(F_{i_1}), \ldots , \rho_\lambda(F_{i_r})$ are independent because they are part of a basis, see also Remark \ref{rem:col_to_gen_col};
    \item otherwise, there is an element of  $\{ 1,\ldots,k  \}$ that is not in $A$. It is then easy to show that $\rho_\lambda(F_{i_1}), \ldots , \rho_\lambda(F_{i_r})$ are independent.
\end{itemize}
The proof is complete.
\end{proof}

Using the previous proposition, it is easy to obtain a generalised 5-colouring of $W_{11}$ (resp.\ $W_{28}$) starting with $\lambda_{11}$ (resp.\ $\lambda_{28}$) that produces an orientable manifold $\mathcal{M}_{11}$ (resp.\ $\mathcal{M}_{28}$) that is double-covered by $\mathcal{N}_{11}$ (resp.\ $\mathcal{N}_{28}$) and satisfies:

\begin{prop}
The manifold $\mathcal{M}_{11}$ (resp.\ $\mathcal{M}_{28}$):
\begin{enumerate}
    \item is a connected, orientable, closed, hyperbolic 4-manifold;
    \item is tessellated in $2^8$ right-angled hyperbolic 120-cells;
    \item can be written as $N_1\cup_{M'}N_2$, where $M'$ is an $L$-space and $b_2^+(N_i)\geq 1$ for $i=1,2$. In particular, its Seiberg-Witten invariants all vanish;
    \item has Betti numbers with coefficients in $\mathbb{R}$ and $\mathbb{Z}_2$ as described in Table \ref{table:homology_M_11} (resp.\ Table \ref{table:homology_M_28}).
\end{enumerate}
\end{prop}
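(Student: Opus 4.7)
The plan is to leverage the close relationship between $\mathcal{M}_{11}$ and $\mathcal{N}_{11}$ (and analogously for the 28 case): the paragraph preceding the statement already tells us that $\mathcal{M}_{11}$ is double-covered by $\mathcal{N}_{11}$, so most of the argument should reduce to what was done in Theorem \ref{thm:explicit_manifolds}. For items (1) and (2), I would simply invoke the generalised colouring proposition stated just above: since $W_{11}$ has dimension $n=4$ and we feed in a symmetric $k=6$ colouring, the condition $k$ even and $k>n$ is satisfied, so the associated $\rho_{\lambda_{11}} \colon \mathcal{F}_{W_{11}} \to \mathbb{Z}_2^5$ is a generalised $5$-colouring whose values are all odd. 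The generalised-colouring analogue of Proposition \ref{prop:corner-angle colouring} then yields a connected, orientable, closed, hyperbolic $4$-manifold tessellated into $2^5=32$ copies of $W_{11}$, each of which contains $8$ right-angled $120$-cells by Proposition \ref{prop:dodecahedral-embed}, for a total of $2^8$ cells. The same argument applies verbatim to $W_{28}$.

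For item (3), the plan is to re-run the four-step proof of Proposition \ref{Prop:separating connsum} in this slightly more general setting. The key preliminary observation is that the symmetry $\lambda_{11}\circ s = \lambda_{11}$ transports to the generalised colouring, i.e.\ $\rho_{\lambda_{11}}\circ s = \rho_{\lambda_{11}}$, because $\rho_{\lambda_{11}}$ is defined tautologically from $\lambda_{11}$. This is precisely the ingredient used in the proof of Lemma \ref{lemma:dodecahedral-separation} to conclude that the two complementary pieces of $M\times \mathbb{Z}_2^k$ inside $\mathcal{M}_\lambda$ are connected and isometric; the same reasoning, applied to the identification $(W_{11}\times S)/\!\sim$, should show that $M_{11}\times S$ embeds totally geodesically as a separating submanifold $N_+\cup N_-$ with $N_+\cong N_-$. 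Once this is established, the four-step argument carries over: $\chi(\mathcal{M}_{11})=\tfrac{17}{2}\cdot 2^8 \gg 4$ gives $b_2^+\geq 2$; the Mayer-Vietoris computation over the rational homology sphere $M_{11}\times S$ splits $b_2^+$ evenly between the two halves; the connect-sum-along-arcs construction produces the separating $L$-space $M'$ (using that a connected sum of $L$-spaces is an $L$-space); and the $1$-handle addition does not change $b_2^+$.

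The main obstacle is precisely the verification of the generalised analogue of Lemma \ref{lemma:dodecahedral-separation}: the symmetric-colouring hypothesis was exploited carefully in the original proof, and in the generalised setting one has to check that the restricted generalised colouring on each copy of $W'_{11}\setminus M$ indeed gives rise to \emph{connected} isometric halves in the quotient. Concretely, I would verify this by showing that the restriction of $\rho_{\lambda_{11}}$ to each of the two copies of $W'_{11}\setminus M$ in $W_{11}$ transforms correctly under $s$, so that both halves are quotients of isomorphic coloured manifolds with corners. I would also be careful that the non-orientability pitfall of generalised colourings does not interfere: the oddness of every value of $\rho_{\lambda_{11}}$ (guaranteed by the preceding proposition) ensures orientability of both halves.

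Finally, item (4) is a purely computational check, identical in spirit to the proof of Theorem \ref{thm:explicit_manifolds}: the explicit tessellation of $\mathcal{M}_{11}$ (resp.\ $\mathcal{M}_{28}$) into $2^8$ $120$-cells that falls out of the generalised colouring data gives a CW-complex presentation, and the ranks of the cellular boundary maps over $\mathbb{R}$ and $\mathbb{Z}_2$ produce the Betti numbers, with Poincar\'e duality and the computed Euler characteristic $\chi=\tfrac{17}{2}\cdot 2^8$ used as a consistency check and to fill in $b_3$ and $b_4$.
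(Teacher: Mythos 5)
Your proposal is correct and follows essentially the same route as the paper, whose entire proof of this proposition is the single sentence that everything works exactly as in the proof of Theorem \ref{thm:explicit_manifolds}; your write-up simply unpacks in detail why Proposition \ref{prop:corner-angle colouring} (in its generalised form), Lemma \ref{lemma:dodecahedral-separation}, Proposition \ref{Prop:separating connsum} and the cellular-homology computation all carry over to the generalised $5$-colouring $\rho_{\lambda_{11}}$ (resp.\ $\rho_{\lambda_{28}}$). In particular, your observation that $\rho_{\lambda}\circ s=\rho_{\lambda}$ is inherited from $\lambda\circ s=\lambda$, so that the two complementary halves remain connected and isometric, is exactly the point the paper leaves implicit.
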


\begin{proof}
Everything works exactly as in proof of Theorem \ref{thm:explicit_manifolds}.
\end{proof}

\begin{table}[H]
\begin{center}
\begin{tabular}{c || c | c | c | c | c}
$\mathcal{M}_{11}$ & $b_0$ & $b_1$ & $b_2$ & $b_3$ & $b_4$ \\
\hline
$\mathbb{R}$ & 1 & 37 & 2248 & 37 & 1 \\
$\mathbb{Z}_2$ & 1 & 707 & 3588 & 707 & 1\\

\end{tabular}
\vspace{.2 cm}
\nota{The Betti numbers of $\mathcal{M}_{11}$.} 
\label{table:homology_M_11}
\end{center}
\end{table}

\begin{table}[H]
\begin{center}
\begin{tabular}{c || c | c | c | c | c}
$\mathcal{M}_{28}$ & $b_0$ & $b_1$ & $b_2$ & $b_3$ & $b_4$ \\
\hline
$\mathbb{R}$ & 1 & 53 & 2280 & 53 & 1 \\
$\mathbb{Z}_2$ & 1 & 713 & 3600 & 713 & 1\\

\end{tabular}
\vspace{.2 cm}
\nota{The Betti numbers of $\mathcal{M}_{28}$.} 
\label{table:homology_M_28}
\end{center}
\end{table}

\section{Questions and further developments}\label{sec:questions_fur_dev}

\textbf{Applicability of the algorithm.}
 Despite the potential problems that the algorithm described in Section \ref{section:proving_L_space} may encounter, in practice it was pretty fast in proving that the six manifolds of Theorem \ref{thm:classdodman} are $L$-spaces. In some cases, it helped avoiding some curves at the beginning of the algorithm to converge faster (see \cite{codice} for more details).
 
 It would be interesting to use it to study larger families of manifolds. The greatest bottleneck appears to be the computation of the Turaev torsion, whose complexity seems to grow very fast with the number of tetrahedra in the ideal triangulation of the \qht{}.
 
 The authors were made aware by Nathan Dunfield in a mail exchange that by using a similar approach it was possible to show that the Seifert-Weber manifold is an $L$-space with bare hands. This is a hyperbolic manifold tessellated by one hyperbolic dodecahedron with $\frac{2}{5} \pi$ dihedral angles, and was proved to be an $L$-space in the context of the monopole Floer homology in \cite{LL} with completely different methods. Also our algorithm confirms that this is the case.
    
\textbf{Limitations of the algorithm.}
The first step of the algorithm consists in drilling a curve from the rational homology sphere that one wants to study hoping to find a rational homology solid torus that is Floer simple, but we do not know if such a curve exists, even when if we start with an $L$-space. So we leave here the following question:
    \begin{question}\label{question:floer_simple}
    Let $M$ be an $L$-space. Does there exist a Floer simple rational homology solid torus $Y$ such that $M$ is a Dehn filling on $Y$?
    \end{question} 
A more specific question that applies to our algorithm would be:
\begin{question}\label{question:floer_simple_geodesic}
Let $M$ be a hyperbolic $L$-space. Does there exist a geodesic $\gamma$ such that drilling $\gamma$ out of $M$ yields a Floer simple rational homology solid torus?
\end{question}

\bibliographystyle{alpha}
\bibliography{Biblio}

\newcommand{\etalchar}[1]{$^{#1}$}
\begin{thebibliography}{KMOS07}

\bibitem[AL20]{AL}
Ian Agol and Francesco Lin.
\newblock Hyperbolic four-manifolds with vanishing {Seiberg-Witten} invariants.
\newblock {\em Characters in Low-Dimensional Topology}, 760:1--8, 2020.

\bibitem[BBP{\etalchar{+}}21]{regina}
Benjamin~A. Burton, Ryan Budney, William Pettersson, et~al.
\newblock Regina: Software for low-dimensional topology.
\newblock {\tt http://\allowbreak regina-normal.\allowbreak github.\allowbreak
  io/}, 1999--2021.

\bibitem[BGW13]{BGW}
Steven Boyer, Cameron~McA Gordon, and Liam Watson.
\newblock On {L}-spaces and left-orderable fundamental groups.
\newblock {\em Mathematische Annalen}, 356(4):1213--1245, 2013.

\bibitem[Bow16]{Bow}
Jonathan Bowden.
\newblock Approximating {$C^0$}-foliations by contact structures.
\newblock {\em Geometric and Functional Analysis}, 26(5):1255--1296, 2016.

\bibitem[BP92]{BenedettiPetronio}
Riccardo Benedetti and Carlo Petronio.
\newblock {\em Lectures on hyperbolic geometry}.
\newblock Springer Science \& Business Media, 1992.

\bibitem[Bur14]{B}
Benjamin~A Burton.
\newblock The cusped hyperbolic census is complete.
\newblock {\em arXiv preprint arXiv:1405.2695}, 2014.

\bibitem[CDGW]{snappy}
Marc Culler, Nathan~M. Dunfield, Matthias Goerner, and Jeffrey~R. Weeks.
\newblock Snap{P}y, a computer program for studying the geometry and topology
  of $3$-manifolds.
\newblock Available at \url{http://snappy.computop.org} (28/06/2022).

\bibitem[CGH11]{CGH}
Vincent Colin, Paolo Ghiggini, and Ko~Honda.
\newblock Equivalence {of Heegaard Floer} homology and embedded contact
  homology via open book decompositions.
\newblock {\em Proceedings of the National Academy of Sciences},
  108(20):8100--8105, 2011.

\bibitem[Che55]{C}
Shiing-Shen Chern.
\newblock On curvature and characteristic classes of a {R}iemann manifold.
\newblock In {\em Abhandlungen aus dem Mathematischen Seminar der
  Universit{\"a}t Hamburg}, volume~20, pages 117--126. Springer, 1955.

\bibitem[Cod22]{codice}
Code.
\newblock {Dodecahedral L-spaces and hyperbolic 4-manifolds}.
\newblock \url{https://doi.org/10.7910/DVN/A9WKAG}, 2022.

\bibitem[Cpl21]{cplex}
IBM~ILOG Cplex.
\newblock V20.1: User’s manual for cplex.
\newblock {\em International Business Machines Corporation}, 2021.

\bibitem[Dun20]{D}
Nathan~M. Dunfield.
\newblock Floer homology, group orderability, and taut foliations of hyperbolic
  3--manifolds.
\newblock {\em Geometry \& Topology}, 24:2075--2125, 2020.

\bibitem[Fer21]{LeonardoPHDThesis}
Leonardo Ferrari.
\newblock {\em Hyperbolic Manifolds and Coloured Polytopes}.
\newblock PhD thesis, Università di Pisa, 2021.

\bibitem[FKS21]{FKS}
Leonardo Ferrari, Alexander Kolpakov, and Leone Slavich.
\newblock Cusps of hyperbolic 4-manifolds and rational homology spheres.
\newblock {\em Proceedings of the London Mathematical Society},
  123(6):636--648, 2021.

\bibitem[Goe16]{Goerner}
Matthias Goerner.
\newblock A census of hyperbolic {P}latonic manifolds and augmented knotted
  trivalent graphs.
\newblock {\em arXiv preprint arXiv:1602.02208}, 2016.

\bibitem[Hom17]{H}
Jennifer Hom.
\newblock A survey on {Heegaard Floer} homology and concordance.
\newblock {\em Journal of Knot Theory and Its Ramifications}, 26(02):1740015,
  2017.

\bibitem[Juh15]{J}
Andr{\'a}s Juh{\'a}sz.
\newblock A survey of {Heegaard Floer} homology.
\newblock In {\em New ideas in low dimensional topology}, pages 237--296. World
  Scientific, 2015.

\bibitem[KLT20a]{KLT1}
{\c{C}}a{\u{g}}atay Kutluhan, Yi-Jen Lee, and Clifford Taubes.
\newblock {HF= HM, I}: {Heegaard Floer homology and Seiberg--Witten Floer
  homology}.
\newblock {\em Geometry \& Topology}, 24(6):2829--2854, 2020.

\bibitem[KLT20b]{KLT2}
{\c{C}}a{\u{g}}atay Kutluhan, Yi-Jen Lee, and Clifford Taubes.
\newblock {HF= HM, II}: {Reeb orbits and holomorphic curves for the
  ech/Heegaard Floer correspondence}.
\newblock {\em Geometry \& Topology}, 24(6):2855--3012, 2020.

\bibitem[KLT20c]{KLT3}
{\c{C}}a{\u{g}}atay Kutluhan, Yi-Jen Lee, and Clifford Taubes.
\newblock {HF= HM, III}: {holomorphic curves and the differential for the
  ech/Heegaard Floer correspondence}.
\newblock {\em Geometry \& Topology}, 24(6):3013--3218, 2020.

\bibitem[KLT21a]{KLT4}
{\c{C}}a{\u{g}}atay Kutluhan, Yi-Jen Lee, and Clifford Taubes.
\newblock {HF= HM, IV}: {The Seiberg--Witten Floer homology and ech
  correspondence}.
\newblock {\em Geometry \& Topology}, 24(7):3219--3469, 2021.

\bibitem[KLT21b]{KLT5}
{\c{C}}a{\u{g}}atay Kutluhan, Yi-Jen Lee, and Clifford Taubes.
\newblock {HF= HM, V}: {Seiberg--Witten Floer} homology and handle additions.
\newblock {\em Geometry \& Topology}, 24(7):3471--3748, 2021.

\bibitem[KM07]{KM}
Peter~B. Kronheimer and Tomasz Mrowka.
\newblock {\em Monopoles and three-manifolds}, volume~10.
\newblock Cambridge University Press Cambridge, 2007.

\bibitem[KMOS07]{KMOS}
Peter Kronheimer, Tomasz Mrowka, Peter Ozsv{\'a}th, and Zolt{\'a}n Szab{\'o}.
\newblock Monopoles and lens space surgeries.
\newblock {\em Annals of mathematics}, pages 457--546, 2007.

\bibitem[KMT15]{KMT}
Alexander Kolpakov, Bruno Martelli, and Steven Tschantz.
\newblock Some hyperbolic three-manifolds that bound geometrically.
\newblock {\em Proc. Amer. Math. Soc.}, 143:9:4103--4111, 2015.

\bibitem[KR17]{KR}
William Kazez and Rachel Roberts.
\newblock {$C^0$} approximations of foliations.
\newblock {\em Geometry \& Topology}, 21(6):3601--3657, 2017.

\bibitem[KRS18]{KRS}
Alexander Kolpakov, Alan~W. Reid, and Leone Slavich.
\newblock Embedding arithmetic hyperbolic manifolds.
\newblock {\em Mathematical Research Letters}, 25:1305--1328, 2018.

\bibitem[LeB02]{L}
Claude LeBrun.
\newblock Hyperbolic manifolds, harmonic forms, and {Seiberg--Witten}
  invariants.
\newblock {\em Geometriae Dedicata}, 91(1):137--154, 2002.

\bibitem[LL22]{LL}
Francesco Lin and Michael Lipnowski.
\newblock Monopole {Floer Homology}, {Eigenform Multiplicities}, and the
  {Seifert--Weber Dodecahedral Space}.
\newblock {\em International Mathematics Research Notices}, 2022(9):6540--6560,
  2022.

\bibitem[Mar16a]{M}
Bruno Martelli.
\newblock Hyperbolic 3-manifolds that embed geodesically.
\newblock {\em https://arxiv.org/abs/1510.06325}, 2016.

\bibitem[Mar16b]{Martellibook}
Bruno Martelli.
\newblock An introduction to geometric topology.
\newblock {\em arXiv preprint arXiv:1610.02592}, 2016.

\bibitem[Mar18]{MartelliSurvey}
Bruno Martelli.
\newblock Hyperbolic four-manifolds.
\newblock In {\em Handbook of group actions, {III}}, pages 37--58.
  International Press of Boston, Inc., 2018.

\bibitem[OS04a]{OS}
Peter Ozsv{\'a}th and Zolt{\'a}n Szab{\'o}.
\newblock Holomorphic disks and genus bounds.
\newblock {\em Geometry \& Topology}, 8(1):311--334, 2004.

\bibitem[OS04b]{OS2}
Peter Ozsv{\'a}th and Zolt{\'a}n Szab{\'o}.
\newblock Holomorphic disks and three-manifold invariants: properties and
  applications.
\newblock {\em Annals of Mathematics}, pages 1159--1245, 2004.

\bibitem[OS04c]{OS3}
Peter Ozsv{\'a}th and Zolt{\'a}n Szab{\'o}.
\newblock Holomorphic disks and topological invariants for closed
  three-manifolds.
\newblock {\em Annals of Mathematics}, pages 1027--1158, 2004.

\bibitem[OS05]{OS1}
Peter Ozsv{\'a}th and Zolt{\'a}n Szab{\'o}.
\newblock On knot floer homology and lens space surgeries.
\newblock {\em Topology}, 44(6):1281--1300, 2005.

\bibitem[RR17]{RR}
Jacob Rasmussen and Sarah~Dean Rasmussen.
\newblock Floer simple manifolds and {$L$}-space intervals.
\newblock {\em Advances in Mathematics}, 322:738--805, 2017.

\bibitem[SW94a]{SW}
Nathan Seiberg and Edward Witten.
\newblock Electric-magnetic duality, monopole condensation, and confinement in
  n= 2 supersymmetric yang-mills theory.
\newblock {\em Nuclear Physics B}, 426(1):19--52, 1994.

\bibitem[SW94b]{SW1}
Nathan Seiberg and Edward Witten.
\newblock Monopoles, duality and chiral symmetry breaking in n= 2
  supersymmetric {QCD}.
\newblock {\em Nuclear Physics B}, 431(3):484--550, 1994.

\bibitem[SW10]{SarWan}
Sucharit Sarkar and Jiajun Wang.
\newblock An algorithm for computing some {Heegaard Floer} homologies.
\newblock {\em Annals of mathematics}, pages 1213--1236, 2010.

\bibitem[Tau94]{Tau}
Clifford~Henry Taubes.
\newblock The {Seiberg-Witten} invariants and symplectic forms.
\newblock {\em Mathematical Research Letters}, 1(6):809--822, 1994.

\bibitem[Tau10a]{Tau1}
Clifford~Henry Taubes.
\newblock Embedded contact {homology and Seiberg--Witten Floer cohomology I}.
\newblock {\em Geometry \& Topology}, 14(5):2497--2581, 2010.

\bibitem[Tau10b]{Tau2}
Clifford~Henry Taubes.
\newblock Embedded contact {homology and Seiberg--Witten Floer cohomology II}.
\newblock {\em Geometry \& Topology}, 14(5):2583--2720, 2010.

\bibitem[Tau10c]{Tau3}
Clifford~Henry Taubes.
\newblock Embedded contact {homology and Seiberg--Witten Floer cohomology III}.
\newblock {\em Geometry \& Topology}, 14(5):2721--2817, 2010.

\bibitem[Tau10d]{Tau4}
Clifford~Henry Taubes.
\newblock Embedded contact {homology and Seiberg--Witten Floer cohomology IV}.
\newblock {\em Geometry \& Topology}, 14(5):2819--2960, 2010.

\bibitem[Tau10e]{Tau5}
Clifford~Henry Taubes.
\newblock Embedded contact {homology and Seiberg--Witten Floer cohomology V}.
\newblock {\em Geometry \& Topology}, 14(5):2961--3000, 2010.

\bibitem[{The}22]{sagemath}
{The Sage Developers}.
\newblock {\em {S}ageMath, the {S}age {M}athematics {S}oftware {S}ystem}, 2022.
\newblock {\tt https://www.sagemath.org}.

\bibitem[Tur02]{T}
Vladimir Turaev.
\newblock {\em Torsions of 3-dimensional manifolds}, volume 208.
\newblock Birkh{\"a}user, 2002.

\bibitem[Ves87]{V1}
Andrei~Yurievich Vesnin.
\newblock Three-dimensional hyperbolic manifolds of {Löbell} type.
\newblock {\em Sibirskii Matematicheskii Zhurnal}, 28(5):50--53, 1987.

\bibitem[Ves10]{V2}
Andrei Vesnin.
\newblock Volumes and normalized volumes of right-angled hyperbolic polyhedra.
\newblock {\em Atti del Seminario Matematico e Fisico dell’ Università di
  Modena e Reggio Emilia}, 01 2010.

\bibitem[Wit94]{W}
Edward Witten.
\newblock {Monopoles and four manifolds}.
\newblock {\em Math. Res. Lett.}, 1:769--796, 1994.

\end{thebibliography}

\noindent\textsc{Dipartimento di Matematica \\
Università di Bologna \\
Piazza di Porta S. Donato, 5 \\
40126 Bologna BO, Italy \\}
\textit{Email address: \href{mailto:ludox73@gmail.com}{ludox73@gmail.com}}
\\ \\
\textsc{Institut de mathematiqués \\
Universite de Neuchàtêl \\
Rue Emile–Argand, 11 \\
2000 Neuchatel, Switzerland \\}
\textit{Email address: \href{mailto:leonardocpferrari@gmail.com}{leonardocpferrari@gmail.com} \\}
\\ \\
\textsc{Dipartimento di Matematica \\
Scuola Normale Superiore \\
P.za dei Cavalieri, 7 \\
56126 Pisa PI, Italy \\}
\textit{Email address: \href{mailto:diego.santoro95@gmail.com}{diego.santoro95@gmail.com}}

\clearpage

%% This declares a command \Comment
%% The argument will be surrounded by /* ... */
\SetKwComment{Comment}{/* }{ */}
\SetAlgoNoLine
\SetAlgoNoEnd

\appendix
\section{The algorithm} \label{appendix:alg}
In this appendix we explain in detail how the algorithm described in Section \ref{section:proving_L_space} works. The code can be found in \cite{codice}. The pseudocode presented here is written to improve the readability, not the speed of the algorithm; the implemented version is slightly different. Whenever we have a rational homology solid torus $Y$, we suppose it is given with a chosen basis for $H_1(\partial Y, \mathbb{Z})$; for this reason we always identify $Sl(Y)$ with $\mathbb{Q} \cup \{\infty\}$. We refer to Section \ref{section:L_space_intervals} for the notation.

The algorithm essentially goes back and forth alternating between these two functions:

\begin{itemize}
    \item \texttt{is\_certified\_L\_space($M$)}: takes as input $M$, a hyperbolic \qhs{}, and searches for a nice drilling: it returns $Y$ and $\mathcal{I}$, where $Y$ is a Turaev simple hyperbolic \qht{} such that the $\sfrac{1}{0}$ filling on $Y$ gives back $M$ and $\mathcal{I}$ is an interval in $Sl(Y)$ that contains $\sfrac{1}{0}$ and whose endpoints are elements in $\iota^{-1}(D_{>0}^\tau(Y))$ (in the case $D_{>0}^\tau(Y)=\emptyset$ we take $\mathcal{I}$ as $Sl(Y)\setminus{[l]}$, where $[l]$ is the homological longitude of $Y$);
    \item \texttt{small\_fillings}$(Y, \mathcal{I})$: takes as input $Y$, a Turaev simple hyperbolic \qht, and $\mathcal{I}$, an interval in $Sl(Y)$, and returns the two \qhs{} with smallest volume among the fillings $Y(\alpha)$ with $\alpha \in \mathcal{I}$. 
\end{itemize}

While these two functions operate, they try to identify all the manifolds they work with, with the hope to obtain information about their $L$-space value using the Dunfield census $\mathscr{Y}$. If we discover that two fillings with coefficient in $\mathcal{I}$ are $L$-spaces, we use Theorem \ref{theorem RR} to conclude that all the fillings with coefficients in $\mathcal{I}$ are $L$-spaces. Eventually, we hope we will conclude that the initial manifold is an $L$-space. 

We need some globally-defined variables:
\begin{itemize}
    \item \texttt{old\_mnfds}: a list of the manifolds that we already found. We want to avoid them, because otherwise the algorithm would enter an infinite loop. Its starting value is the empty list;
    \item \texttt{M\_C}: Max Coefficient, a positive integer. When we search for minimal volume fillings on $Y$, we search among the fillings $\sfrac{h}{k}$ with $|h|,|k| \leq$ \texttt{M\_C}.
\end{itemize}

\begin{algorithm}

\caption{\texttt{is\_certified\_L\_space}}\label{alg:one}
\KwData{$M$ a hyperbolic \qhs{}.}
\KwResult{\texttt{True} if a proof that $M$ is an $L$-space is found, \texttt{False} otherwise.}
--------------------------------------------------------------------------

\textbf{add} $M$ to \texttt{old\_mnfds}\;

$C = $ finite collection of simple closed curves in $M$, provided by SnapPy\;
$X= \{M \smallsetminus \text{the interior of a tubular neighborhood of } c\}_{c \in C}$\; \Comment{For every $Y$ in $X$, we fix a peripheral basis such that the filling $\sfrac{1}{0}$ on $Y$ gives back $M$.}

\For{$Y \in X$}{
\If{ $Y$ belongs to Dunfield census }{
\Return "$L$-space value of $M$ found using the census" }}
\texttt{found} $ = $ \texttt{False} \;
\While{\texttt{found} $ == $ \texttt{False}}{
\If{ there is no hyperbolic $Y$ in $X \smallsetminus$\texttt{old\_mnfds} }{
\Return \texttt{False}
}
$Y=$ smallest volume hyperbolic $Y \in X \smallsetminus$\texttt{old\_mnfds}\;
\eIf{$Y$ is Turaev simple}{
\texttt{found} $ = $ \texttt{True} \;
}
{
$X = X \smallsetminus \{ Y \}$\;
}

}
\texttt{Possible\_intervals} $=$ intervals in $Sl(Y)$ that contains $\sfrac{1}{0}$ and whose endpoints are elements in $\iota^{-1}(D_{>0}^\tau(Y))$ (in the case $D_{>0}^\tau(Y)=\emptyset$ we consider $Sl(Y)\setminus{[l]}$, where $[l]$ is the homological longitude of $Y$)\;
\uIf{\texttt{Possible\_intervals} $== \{ \mathcal{I}_1; \mathcal{I}_2\} $}{
\Return (\texttt{small\_fillings}$(Y, \mathcal{I}_1)$ \textbf{or} \texttt{small\_fillings}$(Y, \mathcal{I}_2)$) \;
}
\ElseIf{\texttt{Possible\_intervals} $== \{ \mathcal{I} \} $}{
\Return \texttt{small\_fillings}$(Y, \mathcal{I})$\;
}
\bigskip

% \eIf{Possible\_intervals has 2 elements}{
% \Return IsLSpace($M$, $Y$, $\mathcal{I}_1$, $\mathcal{L}$) or IsLSpace($M$, $Y$, $\mathcal{I}_2$, $\mathcal{L}$)}
% {$\mathcal{I}$ = unique interval in Possible\_intervals}
% add $Y$ to $\mathcal{L}$\;

% {
% L\_spaces\_found=0\;

% \For{ $p$ in $[-\texttt{FillCoeff}, \texttt{FillCoeff}]$}{
% \For{ $q$ in $[0, \texttt{FillCoeff}]$}{
% \If{$(p,q) != (-1,0)$ and ($p,q) \in \mathcal{I}$}
% {$N=Y(p,q)$\;
% \If{$N$ is in the census}{
% \If{$L$-space value of $N$ is \texttt{False}} {\Return \texttt{False}}
% \If{$L$-space value of $N$ is \texttt{True}} {L\_spaces\_found=L\_spaces\_found+1}
% }
% }
% }
% }
% \If{$ L\_spaces\_found \geq 2 $}{\Return \texttt{True}}
% \If{$ L\_spaces\_found == 1 $}{
% $M=$ smallest-volume Dehn-filling of $Y$ with slope in in $\mathcal{I}$ not in $\mathcal{L}$\;

% \Return IsLSpace($M$, $\mathcal{L}$)\;
% }
% \If{$ X== 0 $}{
% $[M_1, M_2]=$ the two smallest-volume Dehn-fillings of $Y$ with slope in $\mathcal{I}$ not in $\mathcal{L}$\;
% \Return IsLSpace($M_1$,$\mathcal{L}$) and IsLSpace($M_2$,$\mathcal{L}$)
% }

% }

\end{algorithm}

\begin{algorithm}

\caption{\texttt{small\_fillings}}\label{alg:two}
\KwData{$Y$ Turaev simple hyperbolic \qht{}; $\mathcal{I}$, an interval in $Sl(Y)$ that contains $\sfrac{1}{0}$ and whose endpoints are in $\iota^{-1}(D_{>0}^\tau(Y))$.}
\KwResult{\texttt{True} if a proof that $Y(\sfrac{1}{0})$ is an $L$-space is found, \texttt{False} otherwise.}
--------------------------------------------------------------------------

\textbf{add} $Y$ to \texttt{old\_mnfds}\;
\texttt{pairs} $= \{ (h,k) | h \in [-\texttt{M\_C}, \texttt{M\_C}], k \in [0, \texttt{M\_C}],  \gcd(h,k)=1  \}  $\;

$X= \{Y(\sfrac{h}{k}) |  (h,k) \in \texttt{pairs} , (h,k)\neq(-1,0), (h,k)\in \mathcal{I}\}$\;

\Comment{$X$ is a list more than a set: even if $Y(\sfrac{h}{k})$ is diffeomorphic to $Y(\sfrac{h'}{k'})$, we count them as two elements in $X$. For this reason we avoid the case $(h,k)=(-1,0)$. }

\texttt{L\_space\_found}=0\;

\For{$M \in X$}{
\If{$M$ is in Dunfield\_QHS\_Census }{
\eIf{"$L$-space value of $M$ found using the census" == \texttt{False}}{\Return \texttt{False}}
{ \texttt{L\_space\_found}=\texttt{L\_space\_found}+1\; $X = X \smallsetminus \{ M \}$\; }
}
}

\If{\texttt{L\_space\_found} $\geq 2$}{\Return \texttt{True}}
\If{\texttt{L\_space\_found} $== 1$}{
\If{ there is no hyperbolic $M$ in $X \smallsetminus$\texttt{old\_mnfds} }{
\Return \texttt{False}
}
$M= $ smallest volume hyperbolic $M \in X \smallsetminus$\texttt{old\_mnfds}\;
\Return \texttt{is\_certified\_L\_space}(M)
}

\If{\texttt{L\_space\_found} $== 0$}{
\If{ there are no two hyperbolic manifolds in $X \smallsetminus$\texttt{old\_mnfds} }{
\Return \texttt{False}
}
$M_1, M_2= $ two smallest volume hyperbolic manifolds in $X \smallsetminus$\texttt{old\_mnfds}\;
\Return (\texttt{is\_certified\_L\_space}($M_1$) \textbf{and} \texttt{is\_certified\_L\_space}($M_2$))\;
}
\bigskip

\end{algorithm}
\clearpage

\section{Reading the output of the algorithm}\label{appendix:example}

Here we comment how to read the output of the function \texttt{is\_certified\_L\_space}. Suppose we give the following commands:

\begin{lstlisting}[basicstyle=\small, language=Python]
M=CubicalOrientableClosedCensus(betti=0)[15]
val=is_certified_L_space(M, save_QHT=True, path_save_QHT="./proofs/RA_dod_15/")
print(val)
\end{lstlisting}

At line 2, we set the options \[ \texttt{save\_QHT=True, path\_save\_QHT="./proofs/RA\_dod\_15/"}; \] they mean that we want to save the $\mathbb{Q}HT$s that we find while the algorithm is running as SnapPy triangulations. These will be saved in the directory \texttt{./proofs/RA\_dod\_15/} (remember to create the directory, otherwise nothing will be saved).
The output we obtain is the following:

\begin{lstlisting}[basicstyle=\small, language=Python]
Inizializing...
: M has volume 17.2248... and homology Z/513
: Computing Turaev torsion drilling...
1: T(1, 2) has volume 14.2777... and homology Z/329
1: Computing Turaev torsion drilling...
11: The manifold T1 filled with (-1, 2) is [s345(-1,3)], its L-space value is 1
12: T1(0, 1) has volume 8.96606... and homology Z/143
12: Computing Turaev torsion drilling...
121: The manifold T12 filled with (1, 1) is [v3245(1,2)], its L-space value is 1
122: T12(3, 5) has volume 6.30690... and homology Z/59
122: T12(3, 5) is t12195(-1,-3), whose L-space value is known to be 1
2: T(1, 3) has volume 13.8548... and homology Z/237
2: Computing Turaev torsion drilling...
21: The manifold T2 filled with (0, 1) is [v2876(-1,2)], its L-space value is 1
22: T2(-1, 3) has volume 9.69817... and homology Z/66
22: Computing Turaev torsion drilling...
221: T22(-2, 3) has volume 6.51216... and homology Z/87
221: T22(-2, 3) is o9_36980(1,2), whose L-space value is known to be 1
222: T22(-3, 4) has volume 6.38898... and homology Z/94
222: T22(-3, 4) is o9_34893(-3,2), whose L-space value is known to be 1

True
\end{lstlisting}

Since the value of \texttt{val} is \texttt{True}, we conclude that the given manifold is an $L$-space.
The idea of the algorithm is the following: we start with the manifold $M$. We drill it and we obtain $T$. By filling $T$ we obtain $M1$ and $M2$. By drilling $M1$ we obtain $T1$. By filling $T1$ we obtain $M11$ and $M12$, and so on.

At the beginning of each line there is a number, which represents which \qhs{} we are referring to. For example, Line 7 starts with 12; this means we are referring to $M12$. 

A \emph{drilling-filling tree} is a tree where each node is either a \qhs{} or a \qht{}, and we have a labeled edge between $A$ and $B$ if $B$ is obtained by Dehn-filling $A$, and the edge is labelled with the coefficients of the filling.

From this output we discover the existence of a drilling-filling tree as in Figure \ref{figure:drillingfilling15}, that can be used to prove, through Theorem \ref{theorem RR}, that the given manifold is an $L$-space.
\begin{figure}[H]
\centering
\includegraphics[scale=0.65]{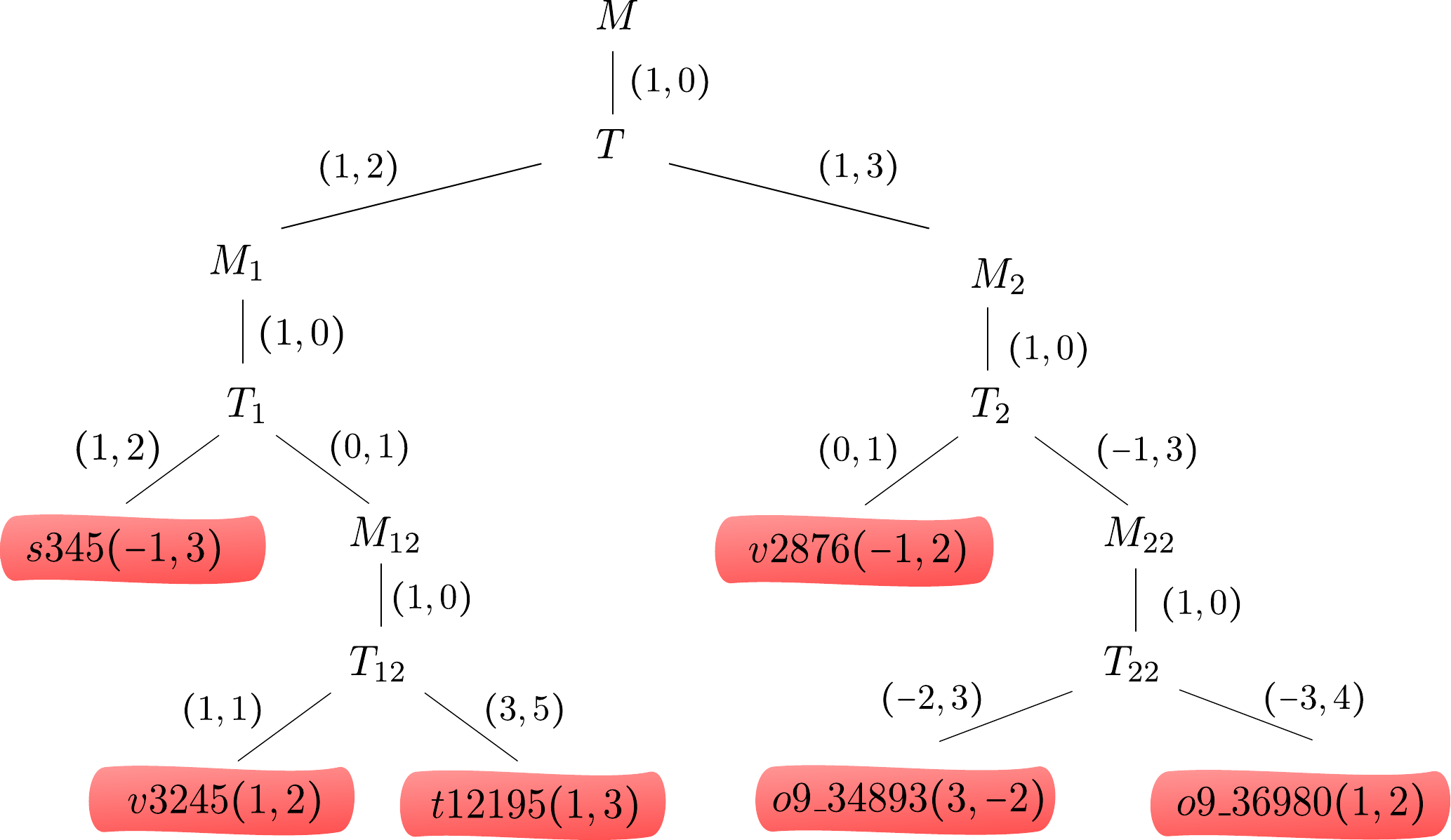}
\nota{One example of filling-drilling tree. The manifolds that are $L$-spaces belonging to the Dunfield census are highlighted in red. In this case M=CubicalOrientableClosedCensus(betti=0)[15].}
\label{figure:drillingfilling15}
\end{figure}

When $\sfrac{1}{0}$ is contained in $\iota^{-1}(D_{>0}^\tau)$, we have two possible intervals to work with. In this case the output looks like the following (we extract some lines from a bigger output):
\begin{lstlisting}[basicstyle=\small, language=Python]
2211: T221(-2, 3) has volume 7.91082... and homology Z/2 + Z/2 + Z/12
2211: Computing Turaev torsion drilling...
22111: T2211(3, 4) has volume 7.32772... and homology Z/2 + Z/2 + Z/8
22111: Computing Turaev torsion drilling...
22111: Double interval..
22111A1: T22111(1, 2) has volume 9.49189... and homology Z/2 + Z/2 + Z/8
22111A1: Computing Turaev torsion drilling...
22111A11: T22111A1(2, 1) has volume 11.0403... and homology Z/2 + Z/42
22111A11: Computing Turaev torsion drilling...
22111A111: The manifold T22111A11 filled with (0, 1) is [v2553(-4,1)], its L-space value is -1
22111B1: T22111(8, 5) has volume 8.92932... and homology Z/4 + Z/24
22111B1: Computing Turaev torsion drilling...
22111B11: T22111B1(2, 3) has volume 9.26762... and homology Z/108
22111B11: Computing Turaev torsion drilling...
\end{lstlisting}

At Line 5, the code is telling us that $T22111$, obtained by drilling $M22111$, has two intervals we want to work with. The fillings on the first of these intervals will have labels starting with $22111A$ and the ones on the second one will have labels starting with $22111B$.
From this output we discover the existence of a drilling-filling tree as in Figure \ref{figure:drillingfillingAB}. When we find a double interval, we have two possible intervals in $Sl(Y)$ where we can look for $L$-spaces. If for some filling in the first one the algorithm returns \texttt{False}, we go on looking in the other one.

\begin{figure}[H]
\centering

  \includegraphics[scale=0.65]{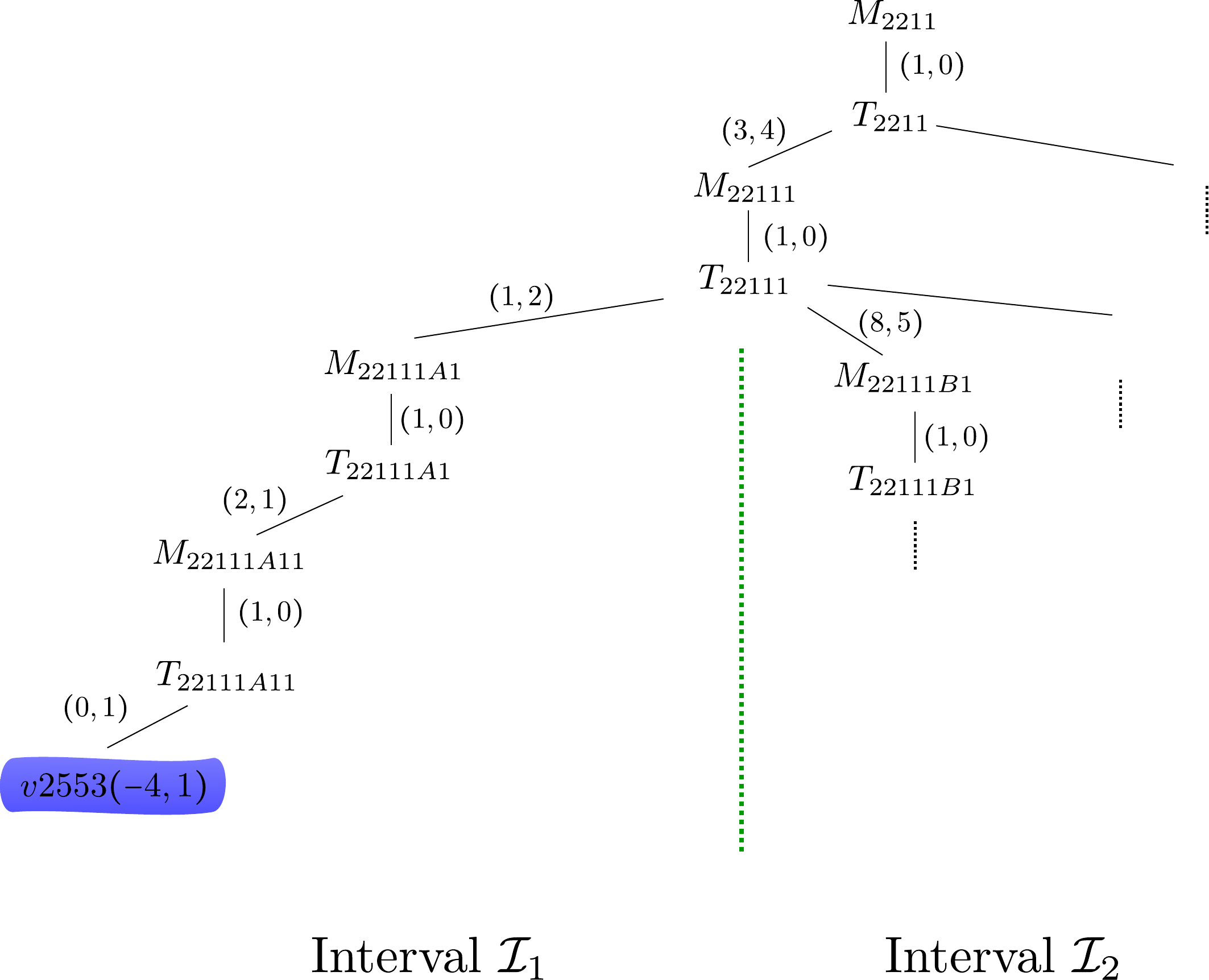}

\nota{A portion of a filling-drilling tree. In this case the slope $\sfrac{1}{0}$ is contained in $\iota^{-1}(D_{>0}^\tau)$ and therefore there are two intervals $\mathcal{I}_1$ and $\mathcal{I}_2$ in $Sl(T_{22111})$ to work with. The manifold $v2553(-4,1)$ is highlighted in blue since it belongs to the Dunfield census and it is not an $L$-space.}
\label{figure:drillingfillingAB}
\end{figure}

\end{document}